\documentclass[a4paper,12pt,fullpage]{article}
\usepackage{eurosym}
\usepackage{appendix}
\usepackage[T1]{fontenc}
\usepackage{epsf,epsfig,subfigure}
\usepackage{amssymb}
\usepackage{amsfonts,mathrsfs}
\usepackage{amsmath}
\usepackage{cases}
\usepackage{mathtools}
\usepackage[english]{babel}
\usepackage{graphicx,multirow}
\usepackage{indentfirst}
\usepackage[colorlinks=true]{hyperref}
\hypersetup{urlcolor=blue,linkcolor=red ,citecolor=blue,colorlinks=true}
\usepackage{bbm}
\usepackage{geometry}
\graphicspath{{./Pictures/}}
\usepackage[latin1]{inputenc}
\usepackage{listings}
\usepackage{amsmath}
\allowdisplaybreaks[4]
\usepackage{tikz}
\usetikzlibrary{intersections}
\usetikzlibrary{matrix}

\usepackage{multirow}
\allowdisplaybreaks[4]
\usepackage[latin1]{inputenc} 

\usepackage{paralist,graphics,epsfig,graphicx,epstopdf,mathrsfs}
\usepackage{float,color,comment,tabulary,booktabs}
\usepackage[normalem]{ulem}

\usepackage{dsfont}
\usepackage{color}

\usepackage{enumitem}
\newfloat{figure}{H}{lof}
\newfloat{table}{H}{lot}
\floatname{figure}{\figurename}
\floatname{table}{\tablename}

\numberwithin{equation}{section}

\newtheorem{theorem}{Theorem}[section]

\newtheorem{claim}{Claim}[section]

\newtheorem{definition}{Definition}[section]

\newtheorem{lemma}{Lemma}[section]

\newtheorem{proposition}{Proposition}[section]
\newtheorem{remark}{Remark}[section]

\newenvironment{proof}[1][Proof]{\noindent\textit{#1.} }{\hfill \rule{0.5em}{0.5em}}
\def\vs{\vskip.3cm}

\DeclareMathOperator{\dist}{dist}
\DeclareMathOperator{\supp}{supp}
\DeclareMathOperator{\diam}{diam}

\newcommand \dis {\displaystyle}

\newcommand{\ds}{\mathrm{d} s}

\newcommand{\NN}{\mathbb{N}}
\newcommand{\R}{\mathbb{R}}
\newcommand{\Z}{\mathbb{Z}}

\newcommand{\bqq}{\begin{equation}}
\newcommand{\eqq}{\end{equation}}
\newcommand{\bqs}{\begin{equation*}}
\newcommand{\eqs}{\end{equation*}}

\begin{document}

	\title{\textbf{Propagation phenomena in KPP-bistable periodic patchy environments$\ $\thanks{This work has received funding from the French ANR ReaCh (ANR-23-CE40-0023-02).}}}
	\author{Quentin Griette$^{\hbox{\small{ a}}}$, Fran{\c{c}}ois Hamel$^{\hbox{\small{ b}}}$, Mingmin Zhang$^{\hbox{\small{ c}}}$, Min Zhao$^{\hbox{\small{ b}}}$\\
		\date{}\\
		\footnotesize{$^{\hbox{a }}$Universit\'e Le Havre Normandie, Laboratoire de Math\'ematiques Appliqu\'ees du Havre, France}\\
		\footnotesize{$^{\hbox{b }}$Aix Marseille Univ, CNRS, I2M, Marseille, France}\\
		\footnotesize{$^{\hbox{c }}$School of Mathematical Sciences, University of Science and Technology of China, P.R. China}}

	\maketitle

\begin{abstract}
	This paper first investigates the propagation dynamics of solutions to the Cauchy problem for a one-dimensional reaction-diffusion equation in a spatially periodic environment consisting of two distinct patch types. The novelty of this work lies in the systematic analysis of a KPP-bistable heterogeneous framework. In this setting, the respective patch lengths, the linear stability of the zero solution and the positive periodic steady state, and the magnitude of the initial data play crucial roles in the long-time dynamics. We first establish persistence properties of the species, showing that uniform persistence holds when the zero steady state of the associated periodic patch model is unstable, while local persistence is obtained under additional suitable conditions. Using a dynamical systems approach, we further establish spreading properties and demonstrate the existence of pulsating traveling waves in two different cases, depending on whether the trivial solution is unstable or stable. Finally, we present two sets of sufficient conditions characterizing species extinction.
\end{abstract}

\medskip
\noindent \textbf{Keywords:} 
Patchy landscapes, Interface conditions, Periodic media,  KPP-bistable patch, Propagation phenomena 

\medskip 
\noindent \textbf{AMS Subject Classification:} 35K57, 92D25, 35J60


	
\section{Introduction}

In recent years, many scholars have devoted considerable attention to the study of propagation dynamics of reaction-diffusion equations in heterogeneous environments, which play an important role in biological invasion, population dynamics, and landscape ecology~\cite{AL2-2019,BHR1-2005,DL-2009,DR-2018,FG-1979,HLZ-2024,ML-2018,SKT-1986,SKW-2015,W-2002}. From the perspective of landscape ecology, natural habitats are often fragmented into distinct homogeneous patches (such as forests, grasslands, and swamps) separated by physical or ecological boundaries, including rivers or roads~\cite{HLZ-2024,ML-2013,SK-1997,SKT-1986}. Numerous ecological studies have shown that habitat fragmentation and edge effects significantly influence population density and spatial distribution, primarily by affecting individual movement at patch boundaries~\cite{DH-2000,DL-2009,L-1999,RFBS-2004,SB-2003,SKT-1986,VPHL-2007}. In such settings, the environment can be viewed as a periodic arrangement of patches with distinct local properties, which may lead to propagation dynamics markedly different from those observed in homogeneous media. This motivates the study of periodic patch models, which provide a natural and tractable framework for capturing both spatial heterogeneity and interface effects in a rigorous mathematical setting.

Reaction-diffusion equations have been widely used to model the spatial spread of biological species (e.g., bacteria, insects, and plants) and the invasive processes associated with biological invasions and epidemic outbreaks~\cite{AW-1975,AW-1978,FM-1979,FM-1977,Fisher-1937,KPP-1937}. In homogeneous environments, propagation phenomena are well understood, including spreading speeds and traveling wave solutions. In particular, bistable equations have been extensively studied, with the existence and uniqueness of traveling waves established by Fife and McLeod in~\cite{FM-1979,FM-1977}. In theses two works, the authors studied the bistable homogeneous reaction-diffusion equation
\begin{equation}\label{eqbistable}
u_t = d_2 u_{xx} + f_2(u), \qquad (t,x)\in\mathbb{R}\times\mathbb{R},
\end{equation}
where the nonlinearity $f_2$ satisfies assumption~\eqref{f2} below. They proved the existence of a traveling wave solution of the form $\phi(x-c_2 t)$ connecting $K_2$ to $0$. The wave profile $\phi:\mathbb{R}\to(0,K_2)$ satisfies
\begin{equation*}\label{TW}
	\left\{
	\begin{array}{l}
		d_2 \phi'' + c_2 \phi' + f_2(\phi) = 0 \quad \text{in } \mathbb{R}, \qquad \phi' < 0 \ \text{in } \mathbb{R},\\[1mm]
		\phi(-\infty) = K_2,\quad \phi(+\infty) = 0,\quad \phi(0) = \theta,
	\end{array}
	\right.
\end{equation*}
where the wave speed $c_2$ has the same sign as $\int_0^{K_2}\!\!f_2(s)\,\mathrm{d}s$~\cite{AW-1978, FM-1977}. The normalization condition $\phi(0)=\theta$, with $\theta\in(0,K_2)$, uniquely determines the profile $\phi$.

Many classical studies have investigated population dispersal in environments consisting of alternating favorable and unfavorable patches, with particular emphasis on the role of interface behavior in invasion dynamics~\cite{HLZ-2022,HLZ-2024,ML-2013,ML-2015,SK-1997}. Shigesada, Kawasaki, and Teramoto~\cite{SK-1997,SKT-1986} proposed a model describing the spread of a single species in a periodically heterogeneous environment with alternating patches. In their framework, individuals undergo logistic growth and diffusion with patch-dependent parameters. They showed that when a population initially invades a localized region, the dynamics exhibit a dichotomy between extinction and propagation, depending on the patch sizes as well as the diffusion and growth rates. In the case of successful invasion, the population evolves into a periodic traveling wave with a well-defined spreading speed, which can be characterized via a dispersion relation.

Subsequent works have incorporated additional ecological mechanisms. In particular, Maciel and Lutscher~\cite{ML-2015,ML-2018} studied invasion dynamics in periodic patch models with strong Allee effects and interface dependent movement preferences. Using homogenization techniques together with classical results for propagation in homogeneous environments, they derived approximate expressions for spreading speeds and showed that dispersal behavior can significantly influence both invasion outcomes and propagation rates in heterogeneous landscapes. More recently, Hamel, Lutscher, and Zhang~\cite{HLZ-2024} introduced and analyzed a class of periodic patch models with interface conditions that explicitly account for movement across habitat boundaries. For monostable (KPP-type) nonlinearities, they established well-posedness of the Cauchy problem, characterized long-time dynamics and spreading properties, and proved the existence of pulsating traveling waves. Moreover, Hamel, Lutscher, and Zhang~\cite{HLZ-2022} investigated propagation and blocking phenomena in two-patch reaction-diffusion models under three different configurations of nonlinearities, namely the KPP-KPP, KPP-bistable, and bistable-bistable cases. 

However, in real world, spatial heterogeneity, such as periodic environments, not only affects the growth rate of species but also their dispersal capacity and behavior, which plays a crucial role in studying the dispersal of invasive species. Freidlin and G\"artner~\cite{FG-1979} studied the propagation of concentration waves in periodic and random media. Weinberger studied the propagation speed and traveling waves of growth and migration models in periodic habitats~\cite{W-2002}. Berestycki, Hamel, and Nadirashvili studied the propagation speed of KPP-type problems with periodic environments~\cite{BHN-2005}. Berestycki, Hamel, and Roques investigated species persistence, biological invasion, and pulsating travelling fronts in a periodically fragmented environment model with arbitrary spatial dimensions~\cite{BHR1-2005,BHR2-2005}. Ducasse and Rossi studied the  blocking and invasion for reaction-diffusion equations in periodic media~\cite{DR-2018}. Eberle considered the front blocking in the presence of gradient drift of a bistable reaction-diffusion equation in straight cylinders in dimension $n\geq 3$~\cite{E-2019}. Ding, Hamel, and Zhao~\cite{DHZ-2017} investigated the existence and qualitative properties of pulsating fronts in spatially periodic reaction-diffusion equations with bistable nonlinearities.

Despite these advances, the analysis of propagation dynamics in periodic patch models remains challenging. The main difficulties stem from the discontinuities of coefficients at patch interfaces and the interaction of distinct local dynamics across heterogeneous regions. In particular, while KPP-type nonlinearities are relatively well understood (see~\cite{HLZ-2022,HLZ-2024}), the case of mixed nonlinearities, such as the coupling of KPP-type and bistable dynamics in a periodic patch framework, has not been resolved.

In this paper, we study a reaction-diffusion model in a periodically fragmented environment consisting of two types of patches, referred to as patch 1 (KPP-type) and patch 2 (bistable). We consider a one-dimensional equation for the population density $u(t,x)$, given by
\begin{equation}\label{eq}
\left\{
\begin{aligned}
u_t - d(x) u_{xx} &= f(x,u), && t>0,\ x\in\mathbb{R}\setminus S, \\
u(t,x^-) &= u(t,x^+), \quad u_x(t,x^-) = \sigma u_x(t,x^+),
&& t>0,\ x\in S_1, \\
u(t,x^-) &= u(t,x^+), \quad \sigma u_x(t,x^-) = u_x(t,x^+),
&& t>0,\ x\in S_2,
\end{aligned}
\right.
\end{equation}
where the diffusion coefficient $d(x)$ and the reaction term $f(x,u)$ depend on the local patch type. The habitat is assumed to consist of a spatially periodic arrangement of patches with period $l = l_1 + l_2$, where $l_i>0$ $(i=1,2)$ denotes the length of patches of type $i$. More precisely, the real line is decomposed into intervals of the form $[nl - l_1, nl + l_2]$, $n\in\mathbb{Z}$, each consisting of two adjacent patches: $(nl - l_1, nl)$ of type 1 and $(nl,nl+l_2)$ of type 2. Following~\cite{HLZ-2022}, we denote by $S_1 = l\mathbb{Z}$ the interface points between $(nl - l_1, nl)$ and $(nl, nl + l_2)$, and by
$S_2 = \{ s + l_2 : s \in l\mathbb{Z} \}$ the interface points between
$(nl, nl + l_2)$ and $(nl+ l_2, (n+1)l)$. We define $S = S_1 \cup S_2$ as the set of all interface points in $\mathbb{R}$. Population movement across patch interfaces is described by transmission conditions imposed at the interface sets $S_1$ and $S_2$, where superscripts $\pm$ denote one-sided limits. These conditions account for discontinuities in the flux and are governed by a parameter $\sigma > 0$, defined by
$$
\sigma = \dfrac{1-\alpha}{\alpha},
$$
where $\alpha\in(0,1)$ denotes the probability that an individual at the interface chooses to move to the adjacent patch of type 1, and $1-\alpha$ the probability that it moves to the patch of type 2.
Interface conditions play an important role in reaction-diffusion patch models. Classical works by Shigesada, Kawasaki and Teramoto~\cite{SKT-1986} adopted continuity conditions for both population density and flux across patch interfaces. Subsequently, motivated by the work of Ovaskainen and Cornell~\cite{OC-2003}, Maciel and Lutscher~\cite{ML-2013} proposed a new class of interface conditions that preserve flux continuity while allowing density discontinuity at patch boundaries, thereby incorporating patch preference and unequal diffusion rates. In contrast, the interface conditions considered in this paper, given in~\eqref{eq}, maintain density continuity while allowing discontinuous flux across interfaces, derive in~\cite{HLZ-2024}. Different types of interface conditions have been shown to substantially affect persistence criteria and spreading speeds in heterogeneous periodic environments~\cite{AL2-2019,HLZ-2024,ML-2015}.

Within each patch, the environment is homogeneous, and the coefficients take the piecewise constant form
\begin{equation}\label{patch}
d(x)=
\begin{cases}
d_1, & x \in (nl - l_1, nl), \\
d_2, & x \in (nl, nl + l_2),
\end{cases}
\qquad
f(x,s)=
\begin{cases}
f_1(s), & x \in (nl - l_1, nl), \\
f_2(s), & x \in (nl, nl + l_2).
\end{cases}
\end{equation}

Throughout this paper, we assume that the functions $f_i$ belong to $C^1(\R)$ $(i =1, 2)$ and that there exists $K_i>0$ such that $f_1$ is of KPP type and $f_2$ is of bistable type, as specified in the following assumptions:
\begin{equation}\label{f1}
\left\{\begin{array}{l}
f_1(0)=f_1\left(K_1\right)=0,\quad 0<f_1(s) \leq f_1^{\prime}(0) s \text {  for } s \in\left(0, K_1\right), \\
f_1^{\prime}\left(K_1\right)<0, f_1<0 \text { in }(-\infty, 0) \cup\left(K_1,+\infty\right)
\end{array}\right.
\end{equation}
and
\begin{equation}\label{f2}
\left\{\begin{array}{l}
f_2(0)=f_2(\theta)=f_2\left(K_2\right)=0 \text { for some } \theta \in\left(0, K_2\right), \\
f_2^{\prime}(0)<0, \quad f_2^{\prime}(\theta)>0, \quad f_2^{\prime}\left(K_2\right)<0, \\
f_2<0 \text { in }(0, \theta) \cup\left(K_2,+\infty\right), \quad f_2>0 \text { in }(-\infty, 0) \cup\left(\theta, K_2\right) .
\end{array}\right.
\end{equation}

Furthermore, unless otherwise specified, we always write $I$ for an arbitrary patch in $\R$ of either type, i.e., either $I= (nl-l_1,nl)$ or $I= (nl,nl + l_2)$.

Unlike~\cite{HLZ-2024}, where KPP-type nonlinearities are assumed in all patches, the periodic patch model~\eqref{eq}-\eqref{patch} considered here involves an alternation between KPP and bistable
dynamics. This alternation substantially increases the complexity of the analysis of propagation phenomena and long-time behavior, particularly with respect to propagation and extinction. Moreover,
the characterization of the spreading speed differs fundamentally from that in~\cite{HLZ-2024}. In the purely KPP setting, the spreading speed can be characterized by the principal eigenvalue associated with the linearization at the zero steady state. In contrast, the interaction between KPP-type and bistable type leads to a more intricate propagation mechanism, in which the speed is no longer
determined solely by local linear properties at the unstable state. A detailed discussion of these difficulties is provided in Section~\ref{sp}.

This paper provides the first analysis of propagation dynamics in a one-dimensional periodic environment consisting of alternating KPP and bistable patches. We will show that the long time behavior of the solution of system~\eqref{eq}-\eqref{patch} is governed by the interplay between patch lengths $l_1$ and $l_2$, initial population density, as well as the linear stability of both the zero solution and the positive periodic steady state solution. System~\eqref{eq}-\eqref{patch} is characterized by spatial periodicity, the integration of KPP and bistable dynamics, and discontinuous interface flux, all of which present substantial analytical challenges. 
By employing comparison principles and sub- and super-solution methods, we establish persistence properties of the species. More precisely, we prove uniform persistence when the zero solution is unstable, and local persistence under suitable parameter conditions, regardless of the stability of the zero solution. Furthermore, we investigate the existence of asymptotic spreading speeds and pulsating traveling waves. When the zero solution is unstable, we establish the existence of an asymptotic spreading speed $c^*$ and pulsating traveling waves, and show that $c^*$ coincides with the minimal wave speed of such waves. When the zero solution is stable, we prove the existence of pulsating traveling waves under appropriate assumptions on the patch size and nonlinearities. Finally, we derive sufficient conditions ensuring species extinction in two cases: when the initial data are sufficiently small, and when the KPP patch length $l_1$ is fixed whereas the bistable patch length $l_2$ is taken sufficiently large. 


\section{Main results}

To show propagation and extinction phenomena under various types of conditions on the sizes of the patches or on the growth functions in the patches, or on the stability of the trivial solution of system~\eqref{eq}-\eqref{patch}, we first introduce the eigenvalue problem.


\subsection{Eigenvalue problem}\label{eig}

We recall several results concerning the principal eigenvalue associated with the linearization of~\eqref{eq}-\eqref{patch} at the trivial steady state $0$. Following~\cite{HLZ-2024, ML-2013, SKT-1986}, there exists a principal eigenvalue $\lambda_1\in\mathbb{R}$, which can be characterized as the unique real number for which there exists a unique continuous function  $\phi:\mathbb{R}\to\mathbb{R}$ satisfying $\phi|_{\overline{I}}\in C^\infty(\overline{I})$ for each patch $I$, and
\begin{equation}\label{eigenv}
\begin{cases}
L_0\phi:=-d(x)\phi^{\prime\prime}(x) -f_s(x, 0)\phi(x)=\lambda_1\phi(x),  & x \in \mathbb{R} \backslash S, \\
\phi(x^{-}) =\phi(x^{+}),\quad \phi^{\prime}(x^{-})=\sigma \phi^{\prime}(x^{+}), & x \in S_1, \\
\phi(x^{-}) =\phi(x^{+}),\,\, \sigma \phi^{\prime}(x^{-})=\phi^{\prime}(x^{+}), & x \in S_2,\\
\phi\text{ is periodic}, \phi>0, \|\phi\|_{L^{\infty}(\R)}=1.
\end{cases}
\end{equation}
By periodic, we mean that $\phi(\cdot+l)=\phi$ in $\R$.
It is well-known that the sign of $\lambda_1$ determines the stability of the state $0$ of system~\eqref{eq}-\eqref{patch}. Under the KPP setting,~\cite{HLZ-2024}  showed that the state $0$ is unstable if $\lambda_1<0$ and stable if $\lambda_1\geq 0$. More precisely, if both $f_1$ and $f_2$ satisfy KPP-type condition as in~\eqref{f1}, with $f_i(s)/s$ $(i=1,2)$ decreasing for $s>0$, and if $u_0:\mathbb{R}\to\mathbb{R}^+$ is continuous, nonnegative, and compactly supported, then the solution $u$ of~\eqref{eq}-\eqref{patch} with initial datum $u_0$ converges locally uniformly in $x$ to a positive steady state as $t\to+\infty$ if $\lambda_1<0$, whereas it converges uniformly in $x$ to $0$ as $t\to+\infty$ if $\lambda_1\geq0$.

However, for the KPP-bistable mixing nonlinearity $f$ here, the situation is much more complex, and as will be seen, the sign of $\lambda_1$ is not the only parameter determining the large-time dynamics of solutions to~\eqref{eq}-\eqref{patch}. Furthermore, whether $0$ is stable at $\lambda_1=0$ remains unclear.

Moreover, consider the elliptic problem associated with~\eqref{eq}-\eqref{patch}:
\begin{equation}\label{sta1}
\begin{cases}
-d(x)p''(x)=f(x,p(x)), & x\in\mathbb{R}\setminus S,\\
p(x^-)=p(x^+),\quad p'(x^-)=\sigma p'(x^+), & x\in S_1,\\
p(x^-)=p(x^+),\quad \sigma p'(x^-)=p'(x^+), & x\in S_2.
\end{cases}
\end{equation}
It follows that the nonlinearity $f$ in~\eqref{sta1} satisfies condition~\cite[(2.11)]{HLZ-2024}, i.e.
\begin{equation*}
\begin{cases}
\forall\,x\in\mathbb{R}\!\setminus\!S,\ f(x,\cdot)\in C^1(\mathbb{R}),\ f(x,0)=0,\\
 \forall\,x\in\mathbb{R}\!\setminus\!S,\ \forall\,s\ge\max(K_1,K_2),\ f(x,s)\le0,\\
\forall\,x\in(nl-l_1,nl),\ f(x,\cdot)=f_1,\ \ \forall\,x\in(nl,nl+l_2),\ f(x,\cdot)=f_2.
\end{cases}
\end{equation*} 
Therefore, Theorem~2.3(i) and Proposition~4.1 of~\cite{HLZ-2024} apply and yield the following properties.

\begin{proposition}\label{proSTA}
	Assume that $0$ is an unstable solution of~\eqref{sta1} \emph{(i.e., $\lambda_1<0$)}. Then 
	\begin{itemize}
		\item[(i)]  there exists a positive, bounded, and periodic solution $p$ to system~\eqref{sta1} on $\mathbb{R}$;
		\item[(ii)]   if~$\tilde{p}$ is a bounded nonnegative continuous solution of the stationary problem~\eqref{sta1}, then, either~$\tilde{p}\equiv 0$ in $\R$, or $\inf_{\mathbb{R}}\tilde{p}>0$;	
		\item[(iii)] there exists a minimal positive bounded solution $q$ to~\eqref{sta1} such that any positive bounded solution $p$ of system~\eqref{sta1} satisfies
		$$
		0<q(x)\leq p(x), \quad \forall x\in\R.
		$$
		Furthermore, this minimal positive bounded solution $q$ is periodic on $\R$.
	\end{itemize} 
\end{proposition}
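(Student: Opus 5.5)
The plan is essentially to check that the hypotheses of \cite[Theorem~2.3(i) and Proposition~4.1]{HLZ-2024} hold for the KPP-bistable nonlinearity \eqref{patch}, and to sketch why their conclusions carry over. Those results need only the structural condition \cite[(2.11)]{HLZ-2024} together with $\lambda_1<0$. They do \emph{not} need a KPP monotonicity hypothesis such as decreasing $f(x,s)/s$, which is used only for uniqueness of the positive steady state. So I will verify (2.11) first: $f_i\in C^1$, $f_i(0)=0$ by \eqref{f1}--\eqref{f2}, and $f_i\le 0$ on $[\max(K_1,K_2),+\infty)$ since $f_1<0$ on $(K_1,\infty)$ and $f_2<0$ on $(K_2,\infty)$. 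Then I will rerun the three arguments to make sure no step secretly uses the KPP property in patch~2.

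For (i), I take the periodic principal eigenfunction $\phi$ of \eqref{eigenv}. Since $f(x,\cdot)\in C^1$ uniformly in $x$ (only two functions are involved), $f(x,\varepsilon\phi)\ge \varepsilon\phi\,(f_s(x,0)+\lambda_1/2)$ for $\varepsilon$ small. Hence $\varepsilon\phi$ is a strict periodic subsolution of \eqref{sta1}, and the interface conditions hold exactly because $\phi$ satisfies them. The constant $M=\max(K_1,K_2)$ is a supersolution. Solving the parabolic problem \eqref{eq} in the class of $l$-periodic functions with initial datum $\varepsilon\phi$ gives a solution that is nondecreasing in $t$, by the comparison principle for the transmission problem from \cite{HLZ-2024}. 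It is also bounded by $M$, so it converges to a periodic steady state $p$ with $\varepsilon\phi\le p\le M$.

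For (ii), suppose $\tilde p\ge0$ is bounded, not identically zero, and has $\inf\tilde p=0$. By the strong maximum principle (Hopf at interfaces is compatible with the flux conditions), $\tilde p>0$ everywhere. Take $x_n$ with $\tilde p(x_n)\to0$ and shift by multiples of $l$ so that the points $x_n$ lie in $[0,l]$. By Harnack's inequality for the transmission problem, $\tilde p\to0$ uniformly on large intervals. Then $\tilde p$ is locally a supersolution of the linear problem with potential close to $f_s(x,0)$. Comparing it with $\kappa\,\psi_R$, where $\psi_R$ is a Dirichlet principal eigenfunction on a long interval with eigenvalue close to $\lambda_1<0$, gives a contradiction via the usual sliding or maximal $\kappa$ argument. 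This step is the main obstacle: it needs Harnack's inequality and the convergence of Dirichlet eigenvalues to $\lambda_1$ across interfaces, both established in \cite[Proposition~4.1]{HLZ-2024}, and I will take them from there.

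For (iii), I consider the solutions $v_\varepsilon$ of \eqref{eq} starting from $\varepsilon\phi$. Given any positive bounded solution $p$, (ii) gives $\inf p>0$, so $\varepsilon\phi\le p$ for small $\varepsilon$. Comparison then gives $q_\varepsilon:=\lim_{t\to\infty}v_\varepsilon\le p$. I then check that $q_\varepsilon$ is independent of $\varepsilon$ for small $\varepsilon$. If $\varepsilon'<\varepsilon$, then $q_{\varepsilon'}\le q_\varepsilon$; conversely, $q_{\varepsilon'}$ is a periodic positive steady state, so $q_{\varepsilon'}\ge\varepsilon\phi$ once $\varepsilon\le\min q_{\varepsilon'}$, which yields equality on a range of $\varepsilon$. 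More simply, I set $q:=\inf_{\varepsilon}q_\varepsilon$, which equals a monotone limit of periodic steady states. It is a steady state by elliptic estimates, is periodic, and is positive by (ii) unless it vanishes. It cannot vanish, because each $q_\varepsilon\ge\varepsilon_0\phi$ for a fixed small $\varepsilon_0$: indeed $v_\varepsilon$ eventually exceeds $\varepsilon_0\phi$ by the instability argument applied to subsolutions $\min(\varepsilon,\varepsilon_0)\phi$. Hence $q$ is the minimal positive bounded solution and is periodic.
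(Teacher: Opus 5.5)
Your proposal is correct. For (i) and (ii) it matches the paper, which also just verifies \cite[(2.11)]{HLZ-2024} and invokes \cite[Theorem~2.3(i), Proposition~4.1]{HLZ-2024}; your sketches of those arguments are extra. For (iii) you take a different route.

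\textbf{The paper's route.} The paper reuses the proof of Theorem~\ref{thmPERS}. It starts the Cauchy problem from the compactly supported subsolution $\kappa^*\psi^R$, where $\kappa^*$ is chosen so that $\kappa^*\psi^R<p$. This gives an increasing solution converging to a steady state $q\le p$. The paper then declares $q$ minimal ``since $p$ is arbitrary'', and gets periodicity from the fact that $q(\cdot+l)$ is also minimal.

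\textbf{Your route.} You start from the periodic subsolutions $\varepsilon\phi$. You use (ii) to put $\varepsilon\phi$ below an arbitrary, possibly non-periodic, positive bounded $p$. You then pass to the monotone limit as $\varepsilon\to0$, with positivity coming from a uniform bound $q_\varepsilon\ge\varepsilon_0\phi$.

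\textbf{Comparison.} Your route gives periodicity of $q$ for free. It also makes explicit why the limit does not depend on the small multiple placed under $p$. The paper's one-line minimality claim leaves this implicit, even though its $\kappa^*$ depends on $p$; it could be repaired by showing via sweeping that $p\ge\kappa_0\psi^R$ for every positive steady state $p$. The paper's route, in turn, only needs $p>0$ on the compact set $[-R,R]$, not $\inf_{\R}p>0$, so this step does not rely on (ii).

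\textbf{The step to write out.} You only gesture at the uniform bound $q_\varepsilon\ge\varepsilon_0\phi$ (``the instability argument''), and it should be spelled out. Consider the set of $\kappa\in(0,\varepsilon_0]$ with $\kappa\phi\le q_\varepsilon$. It is nonempty (it contains $\varepsilon$) and closed. It is also open: if $\kappa\phi\le q_\varepsilon$, a touching point would exist by periodicity and continuity. That is excluded by the strong maximum principle, applied inside patches and, via the flux conditions, at interfaces, because $\kappa\phi$ is a strict subsolution. Hence $\min(q_\varepsilon-\kappa\phi)>0$ and $\kappa$ can be increased. So the set is all of $(0,\varepsilon_0]$. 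With this bound, your first attempt at $\varepsilon$-independence, which is circular as written because the range $(\varepsilon',\min q_{\varepsilon'}]$ could be empty, becomes rigorous. Indeed, $q_\varepsilon\ge\varepsilon_0\phi$ gives $q_\varepsilon\ge q_{\varepsilon_0}$ by comparison, while $q_\varepsilon\le q_{\varepsilon_0}$ by monotonicity. So $q_\varepsilon=q_{\varepsilon_0}$ for all $\varepsilon\le\varepsilon_0$, and your $q$ is simply $q_{\varepsilon_0}$.
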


More precisely, Proposition~\ref{proSTA}(i) and (ii) follow respectively from Theorem 2.3(i) and Proposition 4.1 in~\cite{HLZ-2024}. In the presence of bistable patches, even if we assume a priori that $0$ is an unstable solution of~\eqref{sta1}, the uniqueness of positive steady states  is still not obvious, instead, we establish the existence of a minimal positive solution to system~\eqref{sta1}, and uniqueness then follows. This corresponds to Proposition~\ref{proSTA}(iii), whose proof will be given in Section~\ref{Sec-SP-TW}.

Since $f_2'(0)<0<f_1'(0)$ here, it is known from the analysis of formula~\cite[(2.16)]{HLZ-2024} that there exists a critical length $l_1^{c}>0$ given by
\begin{equation}
	\label{l1c}
l_1^{c}
:=
2\sqrt{\frac{d_1}{f_1'(0)}}
\arctan\left(
\sigma\sqrt{\frac{-d_1 f_2'(0)}{d_2 f_1'(0)}}
\tanh\!\left(
\frac{l_2}{2}\sqrt{\frac{-f_2'(0)}{d_2}}
\right)
\right),
\end{equation}
such that $0$ is stable (i.e., $\lambda_1> 0$) if $l_1< l_1^c$, and unstable (i.e., $\lambda_1<0$) if $l_1>l_1^c$. The quantity $l_1^c$ is increasing with respect to $l_2>0$, and as the size of $l_2$ tends to infinity (i.e., $l_2\to+\infty$), we obtain that 
\begin{equation}\label{L1c}
l_1^c\to L_1^{c}
:=
2\sqrt{\frac{d_1}{f_1'(0)}}
\arctan\left(
\sigma\sqrt{\frac{-d_1 f_2'(0)}{d_2 f_1'(0)}}
\right).
\end{equation}
Therefore, as long as $l_1\geq L_1^c$, the trivial solution of~\eqref{sta1} is unstable (i.e., $\lambda_1<0$), no matter how large the size of the bistable patches is. 

Similar to the preceding analysis, one can also show that there exists a critical length $l_2^c > 0$ given by
\begin{equation}\label{l2c}
l_2^c := 2\sqrt{-\frac{d_2}{f_2'(0)}} \operatorname{arctanh} \left[ \frac{1}{\sigma} \sqrt{-\frac{d_2 f_1'(0)}{d_1 f_2'(0)}} \tan \left( \frac{l_1}{2} \sqrt{\frac{f_1'(0)}{d_1}} \right) \right].
\end{equation}
This critical value $l_2^c$ is well-defined if and only if the following condition holds:
$$
0<\frac{1}{\sigma} \sqrt{-\frac{d_2 f_1'(0)}{d_1 f_2'(0)}} \tan \left( \frac{l_1}{2} \sqrt{\frac{f_1'(0)}{d_1}} \right)<1.
$$
This condition means that $0<l_1<L_1^c$, where $L_1^c$ is defined in~\eqref{L1c}. If the above condition does not hold, i.e., $l_1 \ge L_1^c$, then the critical value $l_2^c$ does not exist. In this case, the trivial solution of~\eqref{sta1} remains unstable (i.e., $\lambda_1<0$) for all $l_2>0$. Therefore, for any $l_1 \in (0, L_1^c)$, $l_2^c$ acts as a  stability threshold for the trivial solution of~\eqref{sta1}. Namely, $0$ is linearly stable (i.e., $\lambda_1 > 0$) for $l_2>l_2^c$, and becomes linearly unstable (i.e., $\lambda_1 < 0$) if $l_2 < l_2^c$. These observations lead to the following results.

\begin{proposition}\label{lem-STA}
	Let $l_1^c$, $L_1^c$, $l_2^c$ be given by~\eqref{l1c},~\eqref{L1c} and~\eqref{l2c}, respectively. We have:
	\begin{itemize}
		\item[\textnormal{(i)}]  For any fixed $l_2>0$, if $l_1 > l_1^c$, then $0$ is an unstable solution of~\eqref{sta1}. The hypothesis $l_1>l_1^c$ equivalently means that either $l_1\ge L_1^c$, or $0 < l_1 < L_1^c$ and $l_2 < l_2^c$.
		
		\item[\textnormal{(ii)}]  For any fixed $l_2>0$, if $l_1 < l_1^c$, then $0$ is a stable solution of~\eqref{sta1}. The hypothesis $l_1<l_1^c$ equivalently means that $0 < l_1 < L_1^c$ and $l_2 >l_2^c$.
	\end{itemize}
\end{proposition}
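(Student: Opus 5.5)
The plan is to reduce everything to the sign of $\lambda_1$, computed through an explicit dispersion relation, and then to check that the conditions on $l_1$ and on $l_2$ are algebraically equivalent.

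\emph{Step 1: the dispersion relation.} By the periodicity and the symmetry of the patch configuration, the principal eigenfunction $\phi$ in \eqref{eigenv} can be taken symmetric about the midpoint of each patch. Set $\alpha_1=\sqrt{(f_1'(0)+\lambda)/d_1}$ and $\beta_2=\sqrt{(-f_2'(0)-\lambda)/d_2}$, for $\lambda$ near $0$ so that both are real and positive. Then $\phi$ is of the form $A\cos(\alpha_1(x-x_1))$ in a type-1 patch, where $x_1$ is its midpoint. In a type-2 patch it is $B\cosh(\beta_2(x-x_2))$, where $x_2$ is its midpoint.

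Continuity of $\phi$ at an interface in $S_1$, together with $\phi'(x^-)=\sigma\phi'(x^+)$, gives
\begin{equation*}
\sqrt{d_1(f_1'(0)+\lambda)}\,\tan\!\Big(\frac{l_1}{2}\alpha_1\Big)=\sigma\sqrt{d_2(-f_2'(0)-\lambda)}\,\tanh\!\Big(\frac{l_2}{2}\beta_2\Big).
\end{equation*}
The interface conditions at $S_2$ give the same relation, by the symmetry. This is the relation \cite[(2.16)]{HLZ-2024}, and we may take it as known. The first positive root $\lambda_1$ is characterized by $\phi>0$, that is, $\alpha_1 l_1/2<\pi/2$.

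\emph{Step 2: the sign of $\lambda_1$.} Consider
\[
F(\lambda)=\sqrt{d_1(f_1'(0)+\lambda)}\tan\Big(\frac{l_1\alpha_1}{2}\Big)-\sigma\sqrt{d_2(-f_2'(0)-\lambda)}\tanh\Big(\frac{l_2\beta_2}{2}\Big)
\]
on the admissible range of $\lambda$. The first term is increasing in $\lambda$ and the second term is decreasing, so $F$ is strictly increasing. Hence $\lambda_1<0$ if and only if $F(0)>0$, and $\lambda_1>0$ if and only if $F(0)<0$. The one caveat is the case where the tangent blows up, that is, $l_1\alpha_1(0)\ge\pi$; then $F(0)>0$ in the extended sense and $\lambda_1<0$.

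Writing out $F(0)>0$ gives
\[
\tan\Big(\frac{l_1}{2}\sqrt{f_1'(0)/d_1}\Big)>\sigma\sqrt{\frac{-d_1f_2'(0)}{d_2f_1'(0)}}\;\tanh\Big(\frac{l_2}{2}\sqrt{-f_2'(0)/d_2}\Big).
\]
Taking $\arctan$, this is exactly $l_1>l_1^c$ with $l_1^c$ given by \eqref{l1c}. Similarly, $F(0)<0$ is exactly $l_1<l_1^c$. Unstable and stable are understood in the sense $\lambda_1<0$ and $\lambda_1>0$, as in Section~\ref{eig}, so the first claims of (i) and (ii) follow.

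\emph{Step 3: the equivalent reformulations.} The function $\tanh$ maps $(0,\infty)$ onto $(0,1)$, so $l_1^c$ is increasing in $l_2$ with supremum $L_1^c$. Hence $l_1\ge L_1^c$ implies $l_1>l_1^c$ for every $l_2$.

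Now fix $0<l_1<L_1^c$. The inequality $l_1>l_1^c$ can be inverted: divide by the constant and apply $\operatorname{arctanh}$. The resulting quantity lies in $(0,1)$, which is exactly the well-definedness condition for \eqref{l2c}. Since $\operatorname{arctanh}$ is increasing, the inversion gives $l_2<l_2^c$, and likewise $l_1<l_1^c$ is equivalent to $l_2>l_2^c$.

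Conversely, $l_1<l_1^c$ forces $l_1<L_1^c$, because $l_1^c<L_1^c$. This yields the equivalence stated in (ii), and (i) follows by the complementary case analysis.

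The main obstacle is Step 1. One must justify that the principal eigenpair is captured by this symmetric ansatz, and that $\lambda_1$ is the smallest root, with $\alpha_1 l_1/2<\pi/2$ forced by positivity. One must also handle the range of $\lambda$ where $\alpha_1$ or $\beta_2$ becomes imaginary. I would avoid these issues by citing the monotone characterization from \cite{HLZ-2024}. Failing that, I would argue via uniqueness of the principal eigenpair: the reflection of $\phi$ is again a principal eigenfunction, so $\phi$ is symmetric, and near $\lambda=0$ both $\alpha_1$ and $\beta_2$ are real.
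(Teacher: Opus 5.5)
Your route is the paper's route. The paper gives no separate proof: it invokes the dispersion relation \cite[(2.16)]{HLZ-2024}, reads off $l_1^c$ at $\lambda=0$, notes its monotonicity in $l_2$ and its limit $L_1^c$, and inverts it to obtain $l_2^c$. That is exactly your Steps~1--3. Your monotonicity of $F$ in $\lambda$ and your treatment of the case $\frac{l_1}{2}\sqrt{f_1'(0)/d_1}\ge\pi/2$ spell out what the paper leaves to the citation.

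There is, however, a concrete slip in Step~1. The flux conditions in \eqref{eigenv} contain no diffusion coefficients. Dividing the flux condition by the continuity condition at $x=0$ therefore gives
\[
\alpha_1\tan\Big(\frac{l_1}{2}\alpha_1\Big)=\sigma\,\beta_2\tanh\Big(\frac{l_2}{2}\beta_2\Big),
\quad\text{i.e.}\quad
\sqrt{\frac{f_1'(0)+\lambda}{d_1}}\,\tan(\cdots)=\sigma\sqrt{\frac{-f_2'(0)-\lambda}{d_2}}\,\tanh(\cdots).
\]
What you wrote is $d_1\alpha_1\tan(\cdots)=\sigma d_2\beta_2\tanh(\cdots)$, since $\sqrt{d_1(f_1'(0)+\lambda)}=d_1\alpha_1$. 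With your $F$, the condition $F(0)>0$ reads $\tan(\cdots)>\sigma\sqrt{-d_2f_2'(0)/(d_1f_1'(0))}\,\tanh(\cdots)$. That produces a threshold different from \eqref{l1c} unless $d_1=d_2$. The inequality you display in Step~2 is the correct one, but it does not follow from the $F$ you defined.

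To fix this, take $F(\lambda)=\alpha_1\tan(l_1\alpha_1/2)-\sigma\beta_2\tanh(l_2\beta_2/2)$. It is still strictly increasing on the range where $\alpha_1,\beta_2>0$ and $l_1\alpha_1/2<\pi/2$, since it is a difference of products of positive monotone factors. With this $F$, the condition $F(0)>0$ is exactly your displayed inequality, and Steps~2--3 go through unchanged.

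This also settles the obstacle you flag without any citation. Near $\lambda=-f_1'(0)$ we have $F<0$. Near the right end of the range, $F>0$, because either the tangent blows up or $\beta_2\to0$. So $F$ always has a root in that range. The symmetric ansatz at that root is a positive periodic eigenfunction, hence it is the principal one by uniqueness.

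Note also that $\lambda_1$ is not the ``first positive root'': it may be negative. It is the unique root of $F$ in that range.
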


Furthermore, it follows from~\eqref{l1c} and~\eqref{l2c} that the critical length functions $l_1^c(l_2)$ and $l_2^c(l_1)$ are strictly increasing with respect to $l_2\in(0,+\infty)$ and $l_1\in(0,L_1^c)$, respectively. This monotonicity establishes a sharp transition in the stability of $0$ for system~\eqref{sta1}, as detailed below:
\begin{enumerate}[label=(\arabic*)]
	\item \textit{Threshold in $l_2$:} for any fixed $l_1\in(0,L_1^c)$, there exists a unique $l_2^* > 0$ such that $l_1^c(l_2^*) = l_1$. Consequently:
	\begin{itemize}
		\item If $l_2 < l_2^*$, then $l_1^c(l_2) < l_1$. According to Proposition~\ref{lem-STA}(i), this implies that $0$ is an unstable solution of~\eqref{sta1}. The same conclusion also holds if $l_1\geq L_1^c$, whatever $l_2$ be.
		\item If $l_2 > l_2^*$, then $l_1^c(l_2) > l_1$. By Proposition~\ref{lem-STA}(ii), $0$ is a stable solution of~\eqref{sta1}.
	\end{itemize}
	
	\item \textit{Threshold in $l_1$:} for any fixed $l_2 > 0$, there exists a unique $l_1^* \in (0, L_1^c)$ such that $l_2^c(l_1^*) = l_2$. It follows that:
	\begin{itemize}
		\item If $l_1 < l_1^*$, then $l_2^c(l_1) < l_2$, which implies that $0$ is a stable solution of~\eqref{sta1} by Proposition~\ref{lem-STA}(ii).
		\item If $l_1^* < l_1 < L_1^c$, then $l_2^c(l_1) > l_2$. By Proposition~\ref{lem-STA}(i), $0$ is an unstable solution of~\eqref{sta1}. The same conclusion also holds if $l_1\geq L_1^c$.
	\end{itemize}
\end{enumerate}


\subsection{Persistence}\label{per}

In this subsection, we investigate the persistence of solutions to problem~\eqref{eq}-\eqref{patch}. More precisely, we first establish the uniform persistence of solutions to the Cauchy problem~\eqref{eq}-\eqref{patch} with nonnegative, continuous, and compactly supported initial data. 

\begin{theorem}\label{thmPERS}
Assume that $0$ is an unstable solution of~\eqref{sta1} \emph{(i.e., $\lambda_1<0$)}. Let $u$ be the solution of~\eqref{eq}-\eqref{patch} with a nonnegative continuous and compactly supported initial datum $u_0\not\equiv0$. Then 
$$
\inf_{x\in \R}\left(\liminf_{t\to+\infty}u(t,x)\right)>0.
$$
\end{theorem}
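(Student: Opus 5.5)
The plan is to build a small compactly supported stationary subsolution from a truncated principal eigenfunction, place it under $u$ at some positive time, and then translate it by periods and use the strong maximum principle to get a bound independent of $x$.

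\emph{Step 1: principal eigenfunction on a large interval.} Since $\lambda_1<0$, I first show that the Dirichlet principal eigenvalue of $L_0$ on the bounded interval $(-R,R)$ is also negative for some large $R$ that is a multiple of $l$. This eigenvalue is defined with the same interface conditions at the points of $S\cap(-R,R)$. It converges to $\lambda_1$ as $R\to+\infty$. To see this, take the periodic eigenfunction $\phi$ of~\eqref{eigenv}, multiply it by a slowly varying cutoff $\chi(x/R)$, and insert this test function in the Rayleigh quotient. The quotient must be written with the weighted form that makes $L_0$ symmetric under the flux conditions, namely weights $1/d_1$ and $\sigma/d_2$ (or similar) on the two patch types. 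The cutoff error is then $O(1/R^2)$.

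Call the resulting Dirichlet eigenpair $(\lambda_R,\psi_R)$, with $\lambda_R<0$, $\psi_R>0$ in $(-R,R)$ and $\psi_R(\pm R)=0$.

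\emph{Step 2: stationary subsolution.} Define $\underline u=\varepsilon\psi_R$ on $[-R,R]$ and $\underline u=0$ outside. For $\varepsilon$ small, $f(x,\varepsilon\psi_R)\ge (f_s(x,0)+\lambda_R/2)\varepsilon\psi_R$ uniformly, because $f_1,f_2\in C^1$. Hence
$$-d\,\underline u''-f(x,\underline u)\le \tfrac{\lambda_R}{2}\varepsilon\psi_R\le0 \quad\text{in }(-R,R)\setminus S.$$
The interface conditions hold inside $(-R,R)$, and at $\pm R$ the function is a maximum of two subsolutions.

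This is where I expect the main difficulty: I need a comparison principle for generalized (max-type) subsolutions under the interface conditions. I would argue it as in~\cite{HLZ-2024}, choosing $R\in l\mathbb Z$ shifted so that $\pm R$ lie inside patches and are not interface points.

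\emph{Step 3: getting under $u$.} By the strong maximum principle for~\eqref{eq}, which holds across interfaces via Hopf-type arguments as in~\cite{HLZ-2024}, $u(1,\cdot)>0$ on $\R$. So for each $k\in\Z$ there is $\varepsilon_k>0$ with $u(1,\cdot)\ge\varepsilon_k\psi_R(\cdot-kl)$. For $k=0$ I get that $u(t+1,\cdot)$ stays above $\varepsilon\psi_R$ for all $t\ge0$, where $\varepsilon=\min(\varepsilon_0,\varepsilon^*)$. Since $\underline u$ is a stationary subsolution, the solution $v$ starting from $\underline u$ is nondecreasing in $t$. It is bounded by $\max(K_1,K_2,\|u_0\|_\infty)$, so it converges to a positive bounded stationary solution $p_\infty$ of~\eqref{sta1}. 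Then $\liminf_{t\to+\infty} u\ge p_\infty$ locally.

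\emph{Step 4: uniformity in $x$.} Proposition~\ref{proSTA}(ii) gives $\inf_\R p_\infty>0$, and (iii) gives $p_\infty\ge q$, where $q$ is the minimal positive periodic solution. The limit $p_\infty$ does not depend on $\varepsilon$ once $\varepsilon\le\varepsilon^*$, by monotonicity. By periodicity, the same construction shifted by $kl$ yields $\liminf_{t\to+\infty}u(t,x)\ge q(x)$ for every $x$, even though the $\varepsilon_k$ depend on $k$. Indeed, each shift only needs $\varepsilon_k\le\varepsilon^*$ at some finite time, and the limit it produces is again some positive bounded solution of~\eqref{sta1}, which is $\ge q$ by Proposition~\ref{proSTA}(iii). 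Therefore $\inf_{x}\liminf_{t\to+\infty}u(t,x)\ge\min_\R q>0$.
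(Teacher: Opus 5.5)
Your argument is correct and is essentially the paper's proof. The paper also places a small multiple of the Dirichlet principal eigenfunction on a large interval under $u(1,\cdot)$, lets it generate a time-increasing solution converging to a positive bounded steady state, and concludes with Proposition~\ref{proSTA}(ii); the one difference is that it cites Lemmas 4.3--4.5 of HLZ-2024 for $\lambda_1^R\to\lambda_1$ where you use the weighted Rayleigh quotient. Your Step~4 does more than is needed: since $u(t+1,x)\ge v(t,x)\to p_\infty(x)$ at every $x\in\R$, you already have $\inf_{x\in\R}\liminf_{t\to+\infty}u(t,x)\ge\inf_\R p_\infty>0$, so you can drop the shifts by $kl$, the appeal to (iii), and the claim that $p_\infty$ does not depend on $\varepsilon$ (that claim would need a sliding argument, not just monotonicity).
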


We then turn to a local persistence result. Specifically, we prove that if the initial datum exceeds $\theta+\eta$ (for some given $\eta>0$) on a sufficiently large bistable patch, and if $\int_{0}^{K_2}f_2(s)\ds>0$, then extinction cannot occur, as the following result shows.

\begin{theorem}\label{thmNOEXT}
	Assume that $\int_0^{K_2} f_2(s) \mathrm{d} s>0$. Let $u$ be the solution of~\eqref{eq}-\eqref{patch} with a nonnegative continuous and compactly supported initial datum $u_0 \not \equiv 0$. Then, for any $\eta>0$, there is $l_2^{**}>0$ such that, if $l_2\geq l _2^{**}$ and $u_0 \geq \theta+\eta$ on an interval of size $l_2^{**}$ included in patch 2, then $u$ is locally persistent, namely, for every bounded interval $H\in\R$,
	\begin{equation}\label{noex}
		\inf_{x\in H}\left(\liminf_{t\to+\infty}u(t,x)\right)>0.
	\end{equation}
\end{theorem}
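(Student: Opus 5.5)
Let me first fix what $l_2^{**}$ has to do. Since $\int_0^{K_2}f_2>0$, the homogeneous bistable equation $u_t=d_2u_{xx}+f_2(u)$ admits compactly supported stationary subsolutions of ``bump'' type. For $\eta>0$ small (we may assume $\theta+\eta<K_2$), I would build a function $w$ as follows. Take the solution of $d_2w''+f_2(w)=0$ with $w'(0)=0$ and $w(0)=\beta\in(\theta_*,K_2)$, where $\theta_*\in(\theta,K_2)$ is defined by $\int_0^{\theta_*}f_2=0$. Choose $\beta$ close enough to $K_2$ that $\beta$ lies above the relevant range. By the energy identity $\frac{d_2}{2}w'^2+F_2(w)=F_2(\beta)>0$, where $F_2(s)=\int_0^sf_2$, this solution reaches $0$ at some finite $\pm R$ with $w'(\pm R)\neq0$. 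Extending $w$ by $0$ outside $[-R,R]$ gives a generalized subsolution (it is the maximum of two subsolutions).

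However, we need $w\le u_0$. Since $u_0\ge\theta+\eta$ only on an interval, while $\beta$ may exceed $\theta+\eta$, I would not use the stationary bump directly. Instead I would use the classical Aronson--Weinberger result: for the bistable equation with positive mass, initial data above $\theta+\eta$ on a long enough interval propagate. Concretely, I take a compactly supported subsolution $\underline w\le\theta+\eta$ as the starting datum. For instance, take the stationary solution with $w(0)=\theta+\eta/2$; if $\theta+\eta/2\le\theta_*$ this is periodic rather than compact, so instead I would use the Fife--McLeod comparison with a pair of traveling fronts. Precisely: on an interval of length $L$ where $u_0\ge\theta+\eta$, put
\[
\underline u(t,x)=\max\Bigl(0,\ \phi(x-x_0-L/2+c_2t-\xi(t))+\phi(-x+x_0-L/2+c_2t-\xi(t))-K_2-q(t)\Bigr),
\]
with the standard exponentially decaying corrections $q(t)=q_0e^{-\mu t}$ and $\xi(t)$ bounded. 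This is a subsolution of the homogeneous bistable equation whenever $L$ is large (depending on $\eta$). Since $c_2>0$ it grows toward $K_2$ on expanding intervals. We therefore get that the solution of the pure bistable problem restricted to a long patch exceeds $\beta$ on a fixed interval by some time $T$.

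The key step is confinement to patch $2$. The argument runs in the following order. First, define $l_2^{**}=L+2R+\text{margin}$. Second, on the patch $(nl,nl+l_2)$ containing the interval, compare $u$ with the solution of the Dirichlet problem on that patch with zero boundary data. Since $u\ge0$ at the interfaces, that Dirichlet solution is a subsolution of~\eqref{eq}: interface conditions are irrelevant for a function vanishing near the endpoints, and comparison for~\eqref{eq} holds as in~\cite{HLZ-2024}. Third, the truncated expanding two-front subsolution stays supported inside the patch for $t\le T$ provided the patch is long, which is why the size condition is put on $l_2$. Fourth, at time $T$ we get $u(T,\cdot)\ge w(\cdot-x_1)$, with $w$ the compact stationary bump and $[x_1-R,x_1+R]$ inside the patch. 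Fifth, since $w$ is a stationary generalized subsolution, $u(t,\cdot)\ge w(\cdot-x_1)$ for all $t\ge T$. This gives a positive lower bound on a subinterval of patch $2$.

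It then remains to pass from this to every bounded interval $H$. Let $v$ solve~\eqref{eq} with initial datum $w(\cdot-x_1)$. Because $w$ is a stationary subsolution, $v$ is nondecreasing in $t$ and bounded by $\max(K_1,K_2,\|u_0\|)$. Hence it converges locally uniformly, by the parabolic estimates of~\cite{HLZ-2024}, to a stationary solution $V\ge w(\cdot-x_1)$ of~\eqref{sta1}. By the strong maximum principle, applied patchwise together with the interface conditions (a Hopf-type argument at interfaces), we have $V>0$ everywhere. So $\min_{\overline H}V>0$, and $u(t+T,\cdot)\ge v(t,\cdot)$ gives~\eqref{noex}.

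The main obstacle is the third step: constructing the two-front subsolution carefully and choosing its parameters uniformly in $\eta$, so that it dominates $\beta$ on $[x_1-R,x_1+R]$ before its support would touch the patch boundaries. I would handle this by taking $T$ first from the homogeneous theory and then enlarging $l_2^{**}$ by $2c_2T+2R$.
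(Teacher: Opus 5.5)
Your overall architecture matches the paper's. The compact stationary bump you obtain from the energy identity is Lemma~\ref{lem-Phi2}. Your final paragraph (the solution started from the bump increases to a positive steady state, which bounds $\liminf u$ from below on every bounded $H$) is Lemma~\ref{lem-nonex1}.

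The gap is in the one nontrivial step: lifting $u$ from the level $\theta+\eta$ to above the bump inside patch~2. As written, this step does not work, for two reasons.
\begin{itemize}
\item Your formula has the wrong sign. With $+c_2t$ in both arguments, the right front sits near $x_0+L/2-c_2t$ and the left one near $x_0-L/2+c_2t$, so the plateau shrinks rather than expands. You need $-c_2t$.
\item More seriously, the ``standard'' Fife--McLeod corrections require $q_0$ small, so that $f_2'<0$ on $[K_2-q_0,K_2]$ and near $0$. The plateau of $\underline u(0,\cdot)$ is then close to $K_2-q_0$, which exceeds $\theta+\eta$. So $\underline u(0,\cdot)\le u_0$ fails, and taking $L$ large does not help.
\end{itemize}
To rescue your route you would need the following choices.
\begin{itemize}
\item Take $q_0=K_2-\theta-\eta$, so that the initial plateau sits at height $\theta+\eta$.
\item Take a decay rate $\beta$ small in terms of $\eta$, for instance $f_2(s)\ge 2\beta(K_2-s)$ on $[\theta+\eta/2,K_2]$.
\item Take a drift of size $\mu q$ with $\mu$ large, to dominate the middle range of the fronts.
\item Take $L$ large compared with $\mu q_0/\beta$, to keep the two fronts apart.
\end{itemize}
This variant can be verified, but it is precisely the computation your proposal leaves open. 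Without it, your third step is unproved.

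The paper avoids fronts altogether. It compares $u$ with the solution $u_{l_2^{**}}$ whose initial datum equals $\theta+\eta$ on the long interval (with unit affine ramps) and $0$ outside. These solutions are bounded independently of $l_2^{**}$, and the center of the interval lies at distance at least $l_2^{**}/2$ from $S$. Local parabolic estimates therefore give $u_{l_2^{**}}(t,x)\to\xi(t)$ as $l_2^{**}\to+\infty$. The convergence is locally uniform in $t\ge0$ and uniform for $x$ in a fixed neighbourhood of the center, where $\xi'=f_2(\xi)$ and $\xi(0)=\theta+\eta$.

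Since $\xi(t)\to K_2>\phi_2(0)$, one first fixes $T$ with $\xi(T)>\phi_2(0)$ and then takes $l_2^{**}$ large. This gives $u(T,\cdot)\ge\phi_2(\cdot-x_{l_2^{**}})$, and Lemma~\ref{lem-nonex1} concludes. This argument needs no wave speeds, no interaction estimates and no Dirichlet problem. Confinement to patch~2 is automatic because $T$ is fixed before $l_2^{**}$ is chosen. Note also that $l_2^{**}$ may depend on $\eta$, so no uniformity in $\eta$ is required.
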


It is worth emphasizing that Theorem~\ref{thmNOEXT} holds
independently of the size of the KPP patch,  that is, the local persistence result remains valid no matter whether $0$ be a stable or unstable solution of~\eqref{sta1}. Some further comments on Theorems~\ref{thmPERS} and~\ref{thmNOEXT} are given in Section~\ref{sec25}.


\subsection{Spreading speed and pulsating traveling waves}\label{sp}

In view of the persistence result established in~Theorems~\ref{thmPERS} and~\ref{thmNOEXT}, we are now in a position to investigate the spreading properties of solutions and to characterize the associated spreading speed and pulsating traveling waves. We first consider the monostable case, where $0$ is an unstable steady state of~\eqref{eq}-\eqref{patch}, and establish the existence of spreading speeds and pulsating traveling waves, see Theorems~\ref{thmSP} and~\ref{thmTW} below. We then address the bistable case, in which $0$ is a stable steady state of problem~\eqref{eq}-\eqref{patch}, and prove the existence of pulsating traveling waves under some additional conditions on $l_1$ and $l_2$, see Theorem~\ref{bisTW} below.

We first introduce some notations and functional settings that will be used throughout this paper.

\medskip
\noindent\textbf{Notations.} 
Let $\mathcal{C}$ denote the space of all bounded and uniformly continuous functions from $\mathbb{R}$ to $\mathbb{R}$, equipped with the compact open topology. That is, we say that $u_n \to u$ in $\mathcal{C}$ as $n \to +\infty$ if $u_n \to u$ locally uniformly in $\mathbb{R}$. For $u,v \in \mathcal{C}$, we write $u \ge v$ if $u(x) \ge v(x)$ for all $x \in \mathbb{R}$, $u > v$ if $u \ge v$ and $u \not\equiv v$, and $u \gg v$ if $u(x) > v(x)$ for all $x \in \mathbb{R}$. Assume that $p \in \mathcal{C}$ is a positive, bounded, periodic solution of~\eqref{sta1} with $p \gg 0$ as guaranteed by Proposition~\ref{proSTA}, under the instability of $0$. Using this steady state, we define
\begin{equation}\label{Cp}
\mathcal{C}_p = \{v \in \mathcal{C} : 0 \le v \le p \}.
\end{equation}

\begin{theorem}\label{thmSP}
Assume that $0$ is an unstable solution of~\eqref{sta1} \emph{(i.e., $\lambda_1<0$)}. Let $q$ be the minimal solution of~\eqref{sta1} given in Proposition~\ref{proSTA}-(iii). Then there exists a positive real number
$c^*>0$
called the \emph{spreading speed}, such that for any nonnegative, continuous initial datum $u_0\in \mathcal{C}_q$, 
the solution $u(t,x)$ of~\eqref{eq}-\eqref{patch} satisfies:
\begin{enumerate}
\item [(i)] For any $c>c^*$, if $u_0$ is furthermore assumed to be compactly supported and satisfies $u_0(x)<q(x)$ for all $x\in\R$, then
$$
\lim_{t\to+\infty}\;
\sup_{|x|\ge ct} u(t,x)=0.
$$
\item [(ii)]  For any $0\leq c<c^*$, if $u_0\not\equiv0$, then
$$
\lim_{t\to+\infty}\;
\sup_{|x|\leq c t}
|u(t,x)-q(x)|=0.
$$
\end{enumerate}
\end{theorem}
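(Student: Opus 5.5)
We would prove Theorem~\ref{thmSP} with the abstract theory of spreading speeds for monotone semiflows in periodic habitats (Weinberger; Liang--Zhao). The plan is to verify the hypotheses of that theory for the time-$1$ map of~\eqref{eq}-\eqref{patch} restricted to $\mathcal{C}_q$ (the space~\eqref{Cp} with $p$ replaced by the minimal solution $q$), and then read off (i) and (ii).

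We define $Q_t[u_0]=u(t,\cdot)$, where $u$ solves~\eqref{eq}-\eqref{patch}. The hypotheses are checked as follows.
\begin{itemize}
\item \textbf{Invariance and periodicity.} By the comparison principle and well-posedness for the interface problem from~\cite{HLZ-2024}, $Q_t$ maps $\mathcal{C}_q$ into itself, since $0$ and $q$ are steady states. It commutes with the translations $T_{kl}$, $k\in\Z$, because the coefficients and interface sets are $l$-periodic.
\item \textbf{Monotonicity.} This again follows from the comparison principle.
\item \textbf{Continuity in the compact-open topology.} This follows from local-in-space Schauder/energy estimates together with boundedness.
\item \textbf{Compactness.} $Q_1[\mathcal{C}_q]$ is precompact in $\mathcal{C}$, by uniform interior and interface H\"older estimates on each closed patch (as in~\cite{HLZ-2024}) and Arzel\`a--Ascoli.
\item \textbf{Monostability in the periodic class.} Among periodic initial data in $\mathcal{C}_q$, $Q_1$ has exactly two fixed points, $0$ and $q$, and every periodic $v$ with $0<v\le q$ gives $Q_n[v]\to q$. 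We prove this as follows. Since $\lambda_1<0$, $\varepsilon\phi$ is a strict subsolution for small $\varepsilon$, and the solution starting from it increases to a periodic steady state that is positive; by Proposition~\ref{proSTA}(ii)-(iii) this state is $\ge q$, hence equals $q$. For general periodic $v>0$, strong positivity after time $1$ puts $v$ above some $\varepsilon\phi$. The upper bound $q$ then gives convergence to $q$. In particular, a periodic fixed point $w$ with $0<w\le q$ must equal $q$, using minimality of $q$.
\end{itemize}

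With these hypotheses in place, Liang--Zhao's theorem yields leftward and rightward speeds $c^*_\pm$. The rightward conclusions are that $\sup_{x\ge ct}u\to0$ for $c>c^*_+$ when $u_0$ is compactly supported with $u_0<q$ (or more generally $u_0\le$ some $q(1-\delta)$ on the right), and that $u-q\to0$ uniformly on $0\le x\le ct$ for $c<c^*_+$ when $u_0\not\equiv0$. The leftward conclusions are analogous. To obtain a single $c^*$, we set $c^*=\max(c^*_+,c^*_-)$. This works for (i), but (ii) then needs the minimum. We therefore check symmetry: the reflection $x\mapsto l_2-x$ maps patch structure to itself and exchanges $S_1$ with $S_2$. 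The interface conditions $u_x^-=\sigma u_x^+$ at $S_1$ and $\sigma u_x^-=u_x^+$ at $S_2$ are exactly swapped under reflection, so the problem is reflection invariant and $c^*_+=c^*_-$. The hypothesis $u_0<q$ in (i) lets us dominate $u_0$ by a compactly supported datum, which is what the abstract theory requires.

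Positivity $c^*>0$ is proved by a subsolution argument. Since $\lambda_1<0$, the periodic principal eigenvalue with exponential weight $e^{-\mu x}$, denoted $k(\mu)$, remains negative for small $|\mu|$. Hence for small $c>0$ a compactly supported subsolution of the form $\varepsilon e^{\cdot}\phi_\mu$ truncated (a Berestycki--Hamel--Roques-type construction on a large box of periods with Dirichlet eigenfunction) moves to the right with positive speed. Concretely, we take a large interval of $N$ periods where the Dirichlet principal eigenvalue in the moving frame with small drift $c$ is still negative, and build a moving compact subsolution. This gives $c^*_+\ge c>0$.

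We expect the main obstacle to be the monostability verification. The issue is the possible existence of intermediate periodic steady states between $0$ and $q$, as well as nonperiodic ones, due to the bistable patches. Minimality of $q$ excludes positive periodic states below $q$, but the convergence $Q_n[v]\to q$ for small positive periodic $v$ needs the strict subsolution $\varepsilon\phi$, and the interface conditions must be handled carefully in the comparison. This is also precisely why the theorem is stated on $\mathcal{C}_q$ rather than with the maximal state.
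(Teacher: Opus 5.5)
Your framework is the paper's. You use the time-$1$ map on $\mathcal{C}_q$ and verify periodicity, continuity, compactness, monotonicity and monostability (via the strict subsolution $\varepsilon\phi$ and minimality of $q$). You then apply the Liang--Zhao theorems and use a reflection to get $c^*_+=c^*_-$. Your reflection $x\mapsto l_2-x$ works as well as the paper's. You should add that $q$ is invariant under it: the reflected $q$ is again a positive bounded steady state, so minimality forces equality. Otherwise $\mathcal{C}_q$ is not mapped to itself.

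\textbf{The genuine gap is the proof of $c^*>0$.} You invoke ``the Dirichlet principal eigenvalue in the moving frame with small drift $c$'' on a box of $N$ periods. That device only exists in translation-invariant media. Here, in the variable $y=x-ct$, the coefficients $d(y+ct)$, $f(y+ct,\cdot)$ and the interface set $S-ct$ (with its flux-jump conditions) all depend on $t$. So there is no stationary elliptic Dirichlet problem in that frame whose eigenvalue could be perturbed from $\lambda_1^R<0$. Making your idea rigorous would require a time-periodic (period $l/c$) parabolic principal eigenvalue with moving interfaces, plus a proof that it stays negative as $c\to0$; none of this is supplied. The exponential-weight eigenvalue $k(\mu)$ does not help directly either. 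The function $e^{-\mu(x-ct)}\phi_\mu$ is unbounded and not compactly supported, so it is not a subsolution of the nonlinear problem. Extracting a lower bound from the linearization would need further machinery, such as a KPP-type minorant of $f$ and its spreading theory.

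The paper avoids all of this with an elementary discrete-time argument, for which you already have the ingredients:
\begin{enumerate}
\item Take $u_0$ compactly supported with $0\le u_0<q$. The local uniform convergence $u(t,\cdot;u_0)\to q$ gives an integer $T$ with $u(T,\cdot;u_0)\ge u_0(\cdot\pm l)$ in $\R$. This holds because $u_0(\cdot\pm l)<q$ on a compact set and vanishes elsewhere.
\item Monotonicity and $l\Z$-periodicity then give, by induction, $Q_{mT}[u_0]\ge u_0(\cdot\pm ml)$ for all $m\in\NN$.
\item This is incompatible with the decay in (i) for any $c<l/T$. Hence $c^*\ge l/T>0$.
\end{enumerate}

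A related, smaller omission concerns (ii). For a general nonperiodic $u_0\not\equiv0$ you also need local convergence to $q$, in order to reach $u(T,\cdot)\ge\delta$ on an interval of length $2r_\delta$ before applying Liang--Zhao. Your monostability argument covers only periodic data: a nonperiodic $u(1,\cdot)$ is not bounded below by $\varepsilon\phi$. The missing step comes from the compactly supported subsolution $\kappa\psi^R$ used in the proof of Theorem~\ref{thmPERS}, combined with minimality of $q$.
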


We now turn to the connection between the asymptotic spreading speed 
$c^*$ and periodic (also called pulsating) traveling waves, whose definition is recalled below.

\begin{definition}
A bounded continuous solution $u: \mathbb{R} \times \mathbb{R} \rightarrow \mathbb{R}$ of problem~\eqref{eq}-\eqref{patch} is called a periodic traveling wave connecting $p$ to $0$ if it has the form $u(t, x)= W(x-c t, x)$, where $c \in \mathbb{R}$ and the function $W: \mathbb{R} \times \mathbb{R} \rightarrow \mathbb{R}$ has the properties: for each $s \in \mathbb{R}$ the map $x \mapsto W(x+s, x)$ is continuous and the map $x \mapsto W(s, x)$ is periodic, and for each $x \in \mathbb{R}$ the map $s \mapsto W(s, x)$ is decreasing with $W(-\infty, x)=p(x)$ and $W(+\infty, x)=0$.
\end{definition}

The following result shows that the asymptotic spreading speed $c^*$ given in 
Theorem~\ref{thmSP} coincides with minimal speeds of periodic traveling waves. 

\begin{theorem}\label{thmTW}
Assume that $0$ is an unstable solution of~\eqref{sta1} \emph{(i.e., $\lambda_1<0$)}. Let $c^*$ be the asymptotic spreading speed given in Theorem~\ref{thmSP}. Then there exists a periodic traveling wave $W(x-ct, x)$ connecting $q$ to $0$, if and only if $c\geq c^*$, where $q$ is the minimal positive periodic solution of system~\eqref{sta1}, given in Proposition~\ref{proSTA}-(iii).
\end{theorem}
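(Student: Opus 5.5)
The plan is to work in the abstract monotone-semiflow framework of Liang and Zhao, and of Weinberger, for periodic habitats. It is the same dynamical-systems framework used to get Theorem~\ref{thmSP}.

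Let $Q_t$ denote the solution map of~\eqref{eq}-\eqref{patch} on $\mathcal{C}_q$ (the order interval $[0,q]$). The well-posedness and comparison principle of~\cite{HLZ-2024} make $\{Q_t\}_{t\ge0}$ a monotone continuous-time semiflow. It has the following properties:
\begin{itemize}
\item it commutes with the translations $T_{kl}$, $k\in\Z$;
\item it is continuous in the compact-open topology;
\item it is compact on bounded sets, by parabolic estimates within each patch plus the transmission conditions, which yield equicontinuity up to the interfaces.
\end{itemize}
The key structural hypothesis is the monostability of $Q_t$ restricted to periodic functions in $[0,q]$: $0$ and $q$ are the only periodic fixed points there. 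This follows from Proposition~\ref{proSTA}. Any periodic steady state $\tilde p$ with $0\le\tilde p\le q$ is either $0$ or satisfies $\inf\tilde p>0$. In the latter case $\tilde p$ is a positive bounded solution, so $\tilde p\ge q$ by minimality, hence $\tilde p=q$. Moreover, $0$ is unstable since $\lambda_1<0$: a small multiple $\varepsilon\phi$ of the principal eigenfunction is a strict subsolution, so periodic orbits starting in $(0,q]$ increase to $q$.

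With these hypotheses, the Liang--Zhao theory gives, for the time-$1$ map $Q_1$, leftward and rightward spreading speeds $c^*_\pm$ for solutions in $\mathcal{C}_q$. For $c\ge c^*_+$ it also gives a rightward periodic traveling wave $W(x-ct,x)$ connecting $q$ to $0$, with $s\mapsto W(s,x)$ nonincreasing. For $c<c^*_+$, no such wave exists. To pass from $Q_1$ to the continuous semiflow one uses the standard argument: a wave for $Q_1$ with speed $c$ is extended to all $t$ by invoking uniqueness and the semiflow property for waves of $Q_t$. This is Theorem~3.4/Theorem~4.4 in Liang--Zhao's continuous-time version. The speed $c^*$ of Theorem~\ref{thmSP} is exactly this $c^*_+$; by the symmetry of the argument, or as already built into the proof of Theorem~\ref{thmSP}, it is the common speed. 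Thus existence for $c\ge c^*$ is essentially inherited from the construction behind Theorem~\ref{thmSP}.

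Several points must then be upgraded from the abstract output to the definition given.
\begin{enumerate}
\item \emph{Continuity of $x\mapsto W(x+s,x)$.} This holds because $u(t,x)=W(x-ct,x)$ is a genuine solution, which is continuous in $x$.
\item \emph{Strict monotonicity in $s$.} Fix $\tau>0$ with $c\tau\ne0$ rational relative to $l$, or argue directly. We have $u(t,x)\ge u(t+\tau,x+c\tau)$ whenever the shift is a multiple of $l$. The strong maximum principle for the patch problem, valid across interfaces thanks to the continuity and flux conditions via a Hopf-type argument, then gives either strict inequality or identity. Identity would force $W$ to be independent of $s$, which contradicts the limits $q$ and $0$.
\item \emph{The limits $W(-\infty,\cdot)=q$ and $W(+\infty,\cdot)=0$.} These come from monotonicity: $W(\pm\infty,\cdot)$ are periodic steady states in $[0,q]$, hence each is $0$ or $q$, and the abstract normalization gives which is which.
\end{enumerate}

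\textbf{Nonexistence for $c<c^*$.} Suppose a wave $W(x-ct,x)$ with $c<c^*$ existed. Take a compactly supported $0\le u_0\le W(\cdot,\cdot)$ at $t=0$, with $u_0\not\equiv0$. This is possible since $W>0$, being strictly decreasing with limit $0$. By comparison, $u(t,x)\le W(x-ct,x)$. Choose $c<c'<c^*$ and evaluate at $x_t\in[c't,c't+l]$. Theorem~\ref{thmSP}(ii) gives $u(t,x_t)-q(x_t)\to0$, while $W(x_t-ct,x_t)\le\max_x W((c'-c)t,x)\to0$. This contradicts $\inf q>0$.

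\textbf{Main obstacle.} I expect the hardest step to be verifying compactness and the strong comparison principle for the interface problem. This includes ensuring that $Q_t$ maps $\mathcal{C}_q$ into a precompact set uniformly near the discontinuity points of $d$ and $f$. I would derive it from the H\"older estimates of~\cite{HLZ-2024} applied patchwise, combined with the transmission conditions, which should control the one-sided derivatives.
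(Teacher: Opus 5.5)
Your route is the paper's. Its proof applies the continuous-time Liang--Zhao results \cite[Theorems~5.2 and~5.3]{LZ-2010} to the semiflow $Q_t$ on $\mathcal{C}_q$ already set up for Theorem~\ref{thmSP}, which gives nonincreasing waves for exactly the speeds $c\ge c^*$. It then upgrades these to strictly monotone waves by the strong-maximum-principle argument of \cite[Theorem~2.8]{HLZ-2024}.

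The one real difference is that you prove the ``only if'' part directly, by placing a compactly supported solution below the wave and invoking Theorem~\ref{thmSP}(ii). This is a sound, self-contained alternative to quoting the abstract nonexistence result, after two small repairs.
\begin{itemize}
\item The definition only gives $W(s,y)\to0$ pointwise as $s\to+\infty$. So instead of $\max_xW((c'-c)t,x)$, take $x_t\in y+l\Z$ for a fixed $y$. Then $W(x_t-ct,x_t)=W(x_t-ct,y)\to0$, while $u(t,x_t)-q(y)\to0$ with $q(y)>0$.
\item Theorem~\ref{thmSP}(ii) only controls $|x|\le c''t$ with $0\le c''<c^*$. So choose $c'\in[0,c^*)$ with $c'>c$, take $x_t\in[c't,c't+l]$, and apply (ii) with some $c''\in(c',c^*)$.
\end{itemize}
Also, Liang--Zhao do not pass from $Q_1$ to $Q_t$ via uniqueness of waves, which is not available here. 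Since you cite their continuous-time theorem anyway, this is harmless.

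The step that fails as written is strict monotonicity. If $c\tau\in l\Z$, then
\[
u(t+\tau,x+c\tau)=W(x-ct,x+c\tau)=W(x-ct,x)=u(t,x)
\]
for \emph{every} pulsating wave. So your dichotomy ``strict inequality or identity'' always gives identity, and identity does not force $W$ to be independent of $s$. You must compare two solutions that differ by a nonzero shift in the first argument of $W$.

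The simplest choice is a pure time shift. Since $c\ge c^*>0$ and $W$ is nonincreasing in $s$, we have $u(t+\tau,\cdot)\ge u(t,\cdot)$ for $\tau>0$. Apply Proposition~\ref{proCP} from an arbitrary initial time, together with the forward uniqueness it implies. This yields one of two alternatives:
\begin{itemize}
\item $u(t+\tau,x)>u(t,x)$ for all $(t,x)\in\R^2$;
\item $u(\cdot+\tau,\cdot)\equiv u$.
\end{itemize}
In the second case, $u$ is $\tau$-periodic and nondecreasing in $t$, hence stationary. Because $c\neq0$, this makes $W(\cdot,x)$ constant, contradicting $W(-\infty,x)=q(x)>0=W(+\infty,x)$. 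So the first case holds for every $\tau>0$, which is precisely $W(\xi-c\tau,x)>W(\xi,x)$ for all $\xi,x$, i.e.\ strict decrease in $s$.

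Comparing $u(t+\tau,x+kl)$ with $u(t,x)$ for $kl>c\tau$ works equally well; only the choice $kl=c\tau$ is useless.
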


In the following, we establish the existence of pulsating fronts for system~\eqref{eq}-\eqref{patch} under the assumptions that $l_1$ is sufficiently small, $l_2$ is sufficiently large, and ${\int_{0}^{K_2}\!f_2(s)\ds>0}$. Under these conditions, the steady state $0$ is stable, and the system admits a positive bounded periodic steady state $p_{l_1,l_2}$ that is strongly stable from below (see Proposition~\ref{pro_pl1l2} and Lemma~\ref{lem_strong}). Therefore, system~\eqref{eq}-\eqref{patch} exhibits a bistable structure, which enables us to establish the existence of pulsating fronts connecting $p_{l_1,l_2}$ and $0$. For simplicity, we write $p^*:=p_{l_1,l_2}$ throughout this paper, keeping in mind its dependence on $l_1$ and $l_2$. 

\begin{theorem}\label{bisTW}
	Assume that $\int_0^{K_2} f_2(s) \mathrm{d} s>0$. There exist $\hat{l}_1>0$ and $\hat{l}_2>0$ such that for all $0<l_1<\hat{l}_1$ and $l_2>\hat{l}_2$, system~\eqref{eq}-\eqref{patch} admits a pulsating front $u(t,x)=\widetilde{W}(x-ct, x)$ connecting $p^*$ to $0$ with speed $c>0$, where $p^*$ is a positive bounded periodic steady state of \eqref{eq}-\eqref{patch}.
\end{theorem}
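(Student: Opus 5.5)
We aim to apply the abstract theory of bistable periodic semiflows (Fang and Zhao, \emph{Bistable traveling waves for monotone semiflows with applications}), in the version used for pulsating fronts in periodic media. The plan is to verify its hypotheses for the time-$1$ map $Q:=Q_1$ of the solution semiflow of~\eqref{eq}-\eqref{patch}. We restrict it to the order interval $\mathcal{C}_{p^*}=\{v\in\mathcal{C}:0\le v\le p^*\}$, where $p^*=p_{l_1,l_2}$ is the positive periodic steady state that we first construct. The abstract theorem then yields a pulsating front $\widetilde W(x-ct,x)$ connecting $p^*$ to $0$. A separate argument gives $c>0$.

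\textbf{Step 1: bistable structure.} Choose $l_1<\hat l_1\le L_1^c$ and $l_2$ large, so that $l_1<l_1^c(l_2)$. By Proposition~\ref{lem-STA}(ii), $0$ is then linearly stable, i.e.\ $\lambda_1>0$, and $\varepsilon\phi$ (with $\phi$ from~\eqref{eigenv}) is a strict periodic supersolution for small $\varepsilon>0$.

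To build $p^*$, we use $\int_0^{K_2}f_2>0$. This gives a compactly supported subsolution $\underline w$ of $d_2w''+f_2(w)=0$ on a long interval, with $\max\underline w>\theta$; it is the standard bump with plateau near $K_2$. Place it inside each bistable patch, away from the interfaces, and take the periodic maximum over all translates $nl$. Since the bumps vanish near the interfaces, the result is a periodic subsolution of~\eqref{sta1}.

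Next, $\max(K_1,K_2)$ is a supersolution. Monotone iteration therefore gives a minimal periodic steady state $p^*\ge\underline w$ above this subsolution. We then show that $p^*$ is \emph{strongly stable from below}, in the sense that $\lambda_1(p^*)>0$ for the linearization. The reason is that for $l_2$ large, $p^*$ is close to $K_2$ on most of patch 2, where $f_2'(K_2)<0$. For small $l_1$, the KPP patch is too short to destabilize, because its contribution is of order $l_1 f_1'$ while the interfaces are controlled. The argument is a perturbation of the eigenvalue problem, in the spirit of the formula for $l_1^c$ with $f_s(x,0)$ replaced by $f_s(x,p^*)$. This is presumably the content of Proposition~\ref{pro_pl1l2} and Lemma~\ref{lem_strong}.

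\textbf{Step 2: hypotheses of the abstract theorem.} We need the following properties of $Q$.
\begin{itemize}
\item[(a)] $Q$ commutes with translations by $l\mathbb{Z}$, is monotone by the comparison principle for the interface problem from~\cite{HLZ-2024}, and is continuous and compact in the compact-open topology on $\mathcal{C}_{p^*}$, by parabolic estimates patchwise.
\item[(b)] $0$ and $p^*$ are strongly stable periodic fixed points from above and from below, respectively.
\item[(c)] Every intermediate periodic fixed point $0\ll\beta\ll p^*$ is unstable. By the minimality of $p^*$ above $\underline w$, and after possibly modifying the argument, this reduces to showing that no stable intermediate state exists.
\item[(d)] Counter-propagation: the spreading speeds of the two monostable subsystems on $[0,\beta]$ and $[\beta,p^*]$ satisfy $c_-^*(\beta)+c_+^*(\beta)>0$.
\end{itemize}
The Fang--Zhao theorem accepts (c)--(d) in the form that the set of intermediate fixed points is totally unordered or empty.

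\textbf{Main obstacle.} The main obstacle is (c)/(d): controlling all intermediate periodic steady states between $0$ and $p^*$. Our first attempt is to show, for $l_1$ small and $l_2$ large, that every periodic steady state $0<\beta<p^*$ takes values close to $\theta$ on the bulk of patch 2, since the only constant states of $f_2$ are $0,\theta,K_2$. Such a $\beta$ is then linearly unstable, because $f_2'(\theta)>0$ on a long interval gives $\lambda_1(\beta)<0$. It follows that each such $\beta$ is unstable from both sides. The monostable subproblems on $[0,\beta]$ and $[\beta,p^*]$ then have positive spreading speeds in the direction of invasion by the stable state, and the counter-propagation condition holds.

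\textbf{Sign of $c$.} Finally, $c>0$ follows by comparison. Theorem~\ref{thmNOEXT} shows that data exceeding $\theta+\eta$ on a long piece of patch 2 persist, and they spread to $p^*$ by the bistable homogeneous comparison with $c_2>0$ along patch 2 propagation. If $c\le0$, placing such data under a shift of the front would contradict the front not advancing. Conversely, the front must be monotone with fixed speed; with $c=0$ we would obtain a stationary front, which the same spreading subsolution rules out.
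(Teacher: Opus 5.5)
Your overall architecture matches the paper's: you generate $p^*$ from periodic copies of the bump $\phi_2$ in patch 2, check (A1)--(A6) of Fang--Zhao on $\mathcal C_{p^*}$, and treat the sign of $c$ afterwards. However, the two steps you yourself call decisive are not carried by the arguments you give.

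\textbf{Instability of intermediate states.} Your premise is that every periodic steady state $0<\beta<p^*$ is close to $\theta$ on the bulk of patch 2. This is false. That $0,\theta,K_2$ are the only zeros of $f_2$ says nothing about non-constant profiles on a long interval. As $l_1\to0$ and $l_2\to+\infty$, the local profiles of $\beta$ are bounded entire solutions of $d_2w''+f_2(w)=0$ with values in $[0,K_2]$. Besides $w\equiv\theta$, these include large-amplitude periodic orbits around $\theta$ and the ground state with $w(\pm\infty)=0$. With a large period, the limiting problem has near-homoclinic periodic solutions, close to $0$ on most of each period, where $f_2'(\beta)\approx f_2'(0)<0$. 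For these, the mechanism ``$f_2'(\theta)>0$ on a long interval'' gives nothing; their instability comes from the sign change of the translation mode $w'$.

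What is needed, and what Lemma~\ref{unstable} does, has three parts:
(i) Show that every such $\beta$ actually takes the value $\theta$. This already needs work: if $0<\beta<\theta$, convexity in patch 2 and concavity in patch 1 put the maximum in the short KPP patch, so $\beta'$ is small at the interfaces and a limiting phase-plane argument applies.
(ii) Blow up at a point where $\beta=\theta$. This gives an entire solution $w_\infty$ with $w_\infty(0)=\theta$ and a positive limiting eigenfunction $\phi_\infty$ with eigenvalue $\tilde\lambda_1\ge0$.
(iii) Exclude the three phase-plane cases separately. If $w_\infty\equiv\theta$, $\phi_\infty$ would be strictly concave. If $w_\infty$ is periodic, use Sturm comparison with $w_\infty'$. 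For the ground state, $\phi_\infty'w_\infty'-\phi_\infty w_\infty''$ is nonincreasing on $(-\infty,\bar x]$ and positive at the maximum point $\bar x$, yet tends to $0$ at $-\infty$.

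Your item (d) also needs more than you state. You must show there is no steady state strictly between $0$ and $\beta$, or between $\beta$ and $p^*$ (Lemma~\ref{noexstate}). You also need an actual proof that both monostable speeds are positive (Lemma~\ref{spreadingspeed}).

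\textbf{Excluding $c=0$.} Your comparison argument handles $c<0$; it is essentially the paper's argument with the bump $\phi_2$. It does not handle $c=0$. A stationary front could be pinned at a KPP patch. A persisting datum placed where $\widetilde W\approx p^*$ stays below it with no contradiction. To exceed $\widetilde W$ where it is small, the solution must cross an interface, which is exactly the blocking question the paper leaves open in general. Spreading with speed $c_2$ inside one bistable patch does not supply this.

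The paper argues instead by compactness. Take a sequence with $l_1^m\to0$, $l_2^m\to+\infty$ and stationary fronts, and normalize at a level $\gamma\in(\theta^*,K_2)$ such that $\int_0^\tau f_2>0$ on $[\gamma,K_2]$. The KPP patch disappears in the limit and the two interface conditions merge into $C^1$ matching. One obtains a bounded entire solution of the bistable ODE with values in $[0,K_2]$ passing through $\gamma$, which the energy identity forbids. This is why $\hat l_1,\hat l_2$ must be readjusted at the end.

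The same compactness is also what makes your Step 1 rigorous. You need $p^*\to K_2$ uniformly on $\mathbb{R}$, not just on most of patch 2, before passing to the limit in the eigenvalue problem. The limit then gives a positive bounded solution of $-d_2\phi''-f_2'(K_2)\phi=0$, which is impossible.
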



\subsection{Extinction}

In this section, we analyze extinction phenomena by considering two distinct cases. We first prove that extinction occurs under the assumption that the principal eigenvalue $\lambda_1$ of system~\eqref{eigenv} is positive  and that the initial datum $u_0$ is sufficiently small, as the following result shows.

\begin{theorem}\label{thmEXT1}
Assume that $\lambda_1 > 0$, where $\lambda_1$ denotes the principal eigenvalue of system~\eqref{eigenv}. Let $u$ be the solution of~\eqref{eq}-\eqref{patch} with a nonnegative, continuous and compactly supported initial datum $u_0 \not \equiv 0$. There exists $\varepsilon>0$ such that, if $\|u_0\|_{L^{\infty}(\R)}\leq\varepsilon$, then the solution of $u$ to~\eqref{eq}-\eqref{patch} goes to extinction, that is, $\|u(t,\cdot)\|_{L^{\infty}(\R)}\allowbreak\to0$ as $t\to+\infty$.
\end{theorem}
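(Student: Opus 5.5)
Since $\lambda_1>0$, the plan is to use the principal eigenfunction $\phi$ of~\eqref{eigenv} to build an exponentially decaying supersolution of~\eqref{eq}-\eqref{patch}, of the form
$$
\overline{u}(t,x)=\delta e^{-\mu t}\phi(x),\qquad 0<\mu<\lambda_1 ,
$$
with $\delta>0$ small. We then conclude by the comparison principle for the interface problem, which is available from~\cite{HLZ-2024}.

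First we check that $\overline u$ satisfies the interface conditions. Because $\phi$ is continuous and satisfies $\phi'(x^-)=\sigma\phi'(x^+)$ on $S_1$ and $\sigma\phi'(x^-)=\phi'(x^+)$ on $S_2$, the function $\overline u$ inherits exactly the transmission conditions of~\eqref{eq}. Moreover, $\phi$ is continuous, positive and periodic, so $m:=\min_{\R}\phi>0$, while $\max_\R \phi=1$.

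Next we estimate the differential inequality inside each patch. For $x\in\R\setminus S$,
$$
\overline u_t-d(x)\overline u_{xx}-f(x,\overline u)=\delta e^{-\mu t}\Big[(\lambda_1-\mu)\phi+f_s(x,0)\phi\Big]-f(x,\delta e^{-\mu t}\phi).
$$
Since $f_1,f_2\in C^1$ and $f(x,0)=0$, we have $f(x,s)=f_s(x,0)s+o(s)$ as $s\to0$, uniformly in $x$, because only two functions are involved. So there is $s_0>0$ such that $|f(x,s)-f_s(x,0)s|\le \frac{\lambda_1-\mu}{2}\,s$ for all $0\le s\le s_0$ and all $x\in\R\setminus S$. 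Choosing $\delta\le s_0$ gives $0\le\overline u\le s_0$, and the expression above is then at least $\delta e^{-\mu t}\frac{\lambda_1-\mu}{2}\phi>0$. Hence $\overline u$ is a supersolution.

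Finally, set $\varepsilon:=\delta m$. If $\|u_0\|_{L^\infty}\le\varepsilon$, then $u_0\le \delta m\le\delta\phi=\overline u(0,\cdot)$. The comparison principle for~\eqref{eq}-\eqref{patch} (from~\cite{HLZ-2024}, applied to piecewise smooth super- and subsolutions satisfying the transmission conditions) then gives $0\le u(t,x)\le \delta e^{-\mu t}\phi(x)\le\delta e^{-\mu t}$. Therefore $\|u(t,\cdot)\|_{L^\infty(\R)}\to0$, and in fact exponentially fast. The lower bound $u\ge0$ follows by comparing with the zero solution.

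The only delicate point is justifying the comparison principle with the flux-discontinuous interface conditions for a supersolution that is merely piecewise $C^2$. Since $\overline u$ satisfies the interface relations with equality, this is exactly the setting covered in~\cite{HLZ-2024}, and no sign condition on the flux jumps needs to be checked. The argument does not use compact support of $u_0$ and does not need $\varepsilon$ to depend on $u_0$; $\varepsilon$ depends only on $\lambda_1$, $\phi$ and $f$.
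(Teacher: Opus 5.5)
Your proposal is correct and follows essentially the same route as the paper: the paper also builds the exponentially decaying supersolution $\kappa e^{-\lambda t}\phi(x)$ from the principal periodic eigenfunction, chooses $\lambda$ and the linearization error $\eta$ so that $\lambda_1\ge\lambda+\eta$, takes $\varepsilon\le\kappa\min_{\R}\phi$, and concludes with the comparison principle (Proposition~\ref{proCP}). Your order of choices (first fix $\mu<\lambda_1$, then the threshold $s_0$ for the linearization error) is slightly cleaner than the paper's wording of the same step.
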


When $u_0$ is not small enough but located in a bistable patch, we  show that the solution can still go extinct provided that the bistable patch is sufficiently large and not favorable i.e. $\int_{0}^{K_2} f_2(s)\ds<0$,  $l_1$ is relatively small, whereas $l_2$ is large enough, we also require that the support of $u_0$ is fully included in a bistable patch and sufficiently far enough from the interface, see the precise statement in~Theorem~\ref{thmEXT2} below. In contrast, when $l_2$ is fixed and $l_1$ is sufficiently small, whether extinction occurs remains an open problem.

\begin{theorem}\label{thmEXT2}
Assume that $\int_{0}^{K_2} f_2(s)\ds<0$. Let $R>0$, $A>0$, and $0<l_1< L_1^{c}$ be fixed. Let $u$ be the solution of~\eqref{eq}-\eqref{patch} with a nonnegative continuous and compactly supported initial datum $u_0 \not \equiv 0$. There exists $\delta>0$ large enough such that, if $l_2\geq R+2\delta$ and $\supp(u_0)$ is included in a bistable patch with $\diam(\supp(u_0)) \le R$,
$\dist(\supp(u_0), S) \ge \delta$, and $\|u_0\|_{L^{\infty}(\R)}\leq A$, then $\|u(t,\cdot)\|_{L^{\infty}(\R)}\to0$ as $t\to+\infty$.
\end{theorem}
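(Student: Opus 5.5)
The plan is a two-stage comparison argument. In the first stage, the solution is trapped inside the bistable patch for a long time by a localized supersolution that decays well below $\theta$. In the second stage, once the solution is uniformly small, the linear stability of $0$ finishes the job, as in Theorem~\ref{thmEXT1}.

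\textbf{Stage 2 (small data go extinct).} The hypothesis $0<l_1<L_1^c$ guarantees that $l_2^c(l_1)$ is well defined. Taking $\delta$ large forces $l_2\ge 2\delta>l_2^c$, so Proposition~\ref{lem-STA}(ii) gives $\lambda_1>0$. I will check that the smallness threshold $\varepsilon$ from Theorem~\ref{thmEXT1} can be chosen uniformly in $l_2$ large. The supersolution there is $\varepsilon e^{-\lambda_1 t/2}\phi$, or simply the stationary supersolution $\varepsilon\phi$ with $\varepsilon$ small. So I need $\lambda_1(l_2)$ bounded below and $\min\phi$ bounded below uniformly as $l_2\to\infty$. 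Since $l_1<L_1^c$ is fixed, $\lambda_1$ converges to a positive limit, namely the eigenvalue of the half-line problem.

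To avoid this uniformity issue, I would instead build directly a periodic supersolution $\bar w$ that is small everywhere and satisfies $L_0\bar w\ge \mu\bar w$. It would be explicit: a $\cosh$ profile in the bistable patch and a $\cos$ profile in the KPP patch, matched by the transmission conditions. This is exactly where the formula for $l_1^c$ and the condition $l_1<L_1^c$ enter, through the arctan inequality. Then it suffices to show that $u(T,\cdot)\le \varepsilon\bar w$ for some time $T$.

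\textbf{Stage 1 (localized decay in the bistable patch).} Because $\int_0^{K_2}f_2<0$, the 1D bistable traveling front of \eqref{eqbistable} has speed $c_2<0$: the state $0$ invades $K_2$. Take the patch to be $(nl,nl+l_2)$ with $\supp u_0\subset[a,a+R]$, where $a\ge nl+\delta$ and $a+R\le nl+l_2-\delta$. Since the initial bound $A$ may exceed $K_2$, first compare with the ODE $\dot M=\max(f_1,f_2)(M)$, $M(0)=\max(A,K_1,K_2)$. Then I construct a Fife–McLeod-type supersolution from two fronts facing each other:
\[
\bar u(t,x)=\min\Bigl\{\phi\bigl(x-a-R-L-|c_2|t/2... \bigr)\Bigr\}\ \text{style},
\]
more concretely $\bar u=\phi(x-\xi_+(t))+\phi(-x+\xi_-(t))+q(t)$, following Fife–McLeod. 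The two fronts move inward at speed about $|c_2|$, and the correction $q(t)=q_0e^{-\beta t}$ absorbs both the interaction term and the excess of $u_0$ over $K_2$. A supersolution only needs to exceed $u$ initially. The fronts annihilate in a finite time $T_0$ depending only on $R$, $A$, $f_2$ and $d_2$. Throughout $[0,T_0]$ the construction lives in $[a-L,a+R+L]$ for some $L$ depending only on these data. Outside this region the function is not small, so instead I use a different device. Within the bistable patch, $u$ is bounded by the Fife–McLeod supersolution plus a boundary-influence term, and this term is controlled by heat-kernel decay $\sim M e^{-\delta^2/(Ct)}$ over the time $T_0$. After $T_0$, $u\le\theta/2$ on a neighbourhood of the core of size comparable to $\delta$, while the interface-driven mass remains exponentially small in $\delta$.

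\textbf{Main obstacle.} The delicate step is the interplay at the interfaces. The KPP patch can feed mass back, and $u$ must be kept below $\varepsilon\bar w$ everywhere and not only near the core. My plan is to make $\delta$ so large that the total time $T_0$ is short compared with $\delta^2/d_2$. I will use a supersolution of the form $\min(M,\ \text{Gaussian tail from }\supp u_0)$ on the whole line, noting that the linear growth rate is at most $\max f_i'$ and that the tail at the distance-$\delta$ interface is $\lesssim M e^{CT_0-\delta^2/(4\max d\,T_0)}$. Choosing $\delta$ large makes this less than $\varepsilon\min\bar w$. Combined with the core bound from the Fife–McLeod supersolution, this gives $u(T_0,\cdot)\le\varepsilon\bar w$, and Stage 2 concludes.
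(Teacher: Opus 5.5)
The decisive gap is in Stage~2. The smallness threshold you obtain is not uniform in $l_2$, and it has to be: $\delta$ (hence your $T_0$) is fixed first, and then every $l_2\ge R+2\delta$ is allowed.

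Your first option requires $\min\phi$ to stay bounded below as $l_2\to+\infty$, and this is false. Since $\lambda_1$ stays bounded away from $-f_2'(0)$ (its limit $\lambda_1^\infty$ lies in $(0,-f_2'(0))$), on each bistable patch the periodic eigenfunction is a multiple of $\cosh\bigl(k(x-y_c)\bigr)$, with $y_c$ the patch center and $k=\sqrt{(-f_2'(0)-\lambda_1)/d_2}$ bounded away from $0$. Hence $\min\phi\le 1/\cosh(kl_2/2)\to0$.

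Your explicit $\cos$/$\cosh$ profile $\bar w$ has the same defect. The transmission inequality $\omega\tan(\omega l_1/2)\le\sigma k\tanh(kl_2/2)$, with $\omega>\sqrt{f_1'(0)/d_1}$, forces $k\ge\omega\tan(\omega l_1/2)/\sigma>0$. So $\bar w$ is exponentially small, relative to its maximum, deep inside the bistable patch, which is exactly where $\supp(u_0)$ lies. Consequently $u(T_0,\cdot)\le\varepsilon\bar w$ cannot follow from a core bound like $\theta/2$, nor from any bound independent of $l_2$.

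The missing idea is to add a positive constant, as in the paper's Lemma~\ref{lem-ex21}: $\kappa(\psi^{l_2}+\gamma)$ is a supersolution. On bistable patches the extra term $-f_2'(0)\kappa\gamma>0$ only helps, while on KPP patches $\lambda_1^{l_2}\psi^{l_2}\ge\lambda_1^\infty\eta/2$ absorbs $f_1'(0)\gamma$. This needs uniform lower bounds for $\lambda_1^{l_2}$, and for $\psi^{l_2}$ on the KPP patches only, and it gives the $l_2$-independent threshold $\varepsilon_0=\kappa\gamma$. Your explicit profile plus $\gamma$ would work equally well, since the KPP margin $(d_1\omega^2-f_1'(0))\cos(\omega l_1/2)$ does not depend on $l_2$. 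It would even replace the paper's compactness proofs of these bounds. But then Stage~1 must deliver $u(T,\cdot)\le\varepsilon_0$ on all of $\mathbb{R}$, not $\theta/2$ on the core.

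Stage~1 also fails as written, for three reasons.
\begin{enumerate}
\item A smooth Gaussian tail is not a supersolution across the interfaces. For a decreasing $C^1$ function $g$, the required inequalities $g'(x^-)\ge\sigma g'(x^+)$ on $S_1$ and $\sigma g'(x^-)\ge g'(x^+)$ on $S_2$ amount to $\sigma\ge1$ and $\sigma\le1$, so they fail unless $\sigma=1$. The paper instead gets smallness away from the support (Lemma~\ref{lem-ex22}) by compactness: all admissible data are dominated by one fixed $\bar u_0$, one translates by $k_ml$ with $|k_m|\to\infty$, and one obtains a limit solution with zero initial datum that is positive somewhere.
\item Your two-front function $\phi(x-\xi_+)+\phi(-x+\xi_-)+q$ tends to $K_2$ outside the bump; a $-K_2$ or a minimum is needed. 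In any case, a front decaying to $0$ cannot dominate $u>0$ on the patch edges.
\item Adding a linear ``boundary-influence'' term to a supersolution of a nonlinear problem requires a separate argument.
\end{enumerate}

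The paper sidesteps all of this by replacing $f_2$ with $\bar f_2\ge f_2$ having stable zeros $\varepsilon_0/2$ and $A+K_2$, middle zero $\theta$, and $\int_{\varepsilon_0/2}^{A+K_2}\bar f_2<0$. Its front $\bar\phi_2$ has speed $\bar c_2<0$, and $\min\bigl(\bar\phi_2(x-\bar c_2t-x_0),\bar\phi_2(-x-\bar c_2t+x_0)\bigr)$ is a supersolution on the single patch $[0,l_2]$ with the following properties:
\begin{itemize}
\item it lies above $u_0$, via $\bar\phi_2(R/2)\ge A$, which also handles $A>K_2$;
\item it stays $\ge\varepsilon_0/2\ge u$ at $x=0,l_2$ for $t\le T$, by Lemma~\ref{lem-ex22};
\item it is $\le\varepsilon_0$ at time $T$.
\end{itemize}
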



\subsection{Discussion}\label{sec25}

The present work differs from classical frameworks in that we consider a periodic environment alternating between KPP and bistable patches. To the best of our knowledge, this is among the first studies to investigate propagation phenomena in a reaction-diffusion system combining monostable and bistable dynamics in a spatially periodic setting. This hybrid structure gives rise to new behaviors that cannot be captured by either framework alone. More precisely, we establish several sets of sufficient conditions for persistence and extinction, thereby providing a systematic characterization of the long-time dynamics of the system~\eqref{eq}-\eqref{patch}. These results highlight the crucial roles of the stability of the trivial solution and the periodic steady state, the relative sizes of the patches, and the magnitude of the initial data. In particular, when the trivial solution $0$ is unstable for system~\eqref{sta1} (e.g., $l_1>l_1^c$ or $0<l_1<L_1^c$ and $l_2<l_2^c$), solutions of~\eqref{eq}-\eqref{patch} with nonnegative, continuous, and compactly supported initial data exhibit uniform persistence and even propagation with positive spreading speed, see Theorems~\ref{thmPERS} and~\ref{thmSP}. Furthermore, pulsating traveling waves with a semi-infinite interval of admissible speeds exist in this case, see Theorem~\ref{thmTW}.

On the other hand, whether the trivial solution $0$ be stable or unstable for system~\eqref{sta1}, if the initial data are sufficiently large and concentrated within a sufficiently large bistable patch, and if $\int_{0}^{K_2}f_2(s)\ds>0$, then local persistence occurs, see Theorem~\ref{thmNOEXT}. For the homogeneous bistable equation~\eqref{eqbistable} in the whole line (for which $0$ is unstable), due to the invariance of~\eqref{eqbistable} by translation, the same hypotheses lead to the local convergence as $t\to+\infty$ of the solutions to $K_2$ and to their leftwards and rightwards propagation with speed $c_2>0$~\cite{AW-1978,FM-1977}. For our patchy problem~\eqref{eq}-\eqref{patch}, under the hypotheses of Theorem~\ref{thmNOEXT}, but without the instability of the trivial solution $0$, whether the solutions exhibit propagation remains an open problem, one obstacle in the proof being due to the heterogeneous nature of~\eqref{eq}-\eqref{patch} and its non-invariance by translation: roughly speaking, it is not clear whether initial large bumps in a given patch can cross the interfaces and then propagate to the other patches.

In addition, under the conditions that $l_1>0$ is sufficiently small, $l_2>0$ is sufficiently large, and $\int_{0}^{K_2} f_2(s)\ds>0$, we establish in Theorem~\ref{bisTW} the existence of pulsating traveling waves. Furthermore, if $\lambda_1>0$ and the initial data are sufficiently small, then extinction occurs, see Theorem~\ref{thmEXT1}. Lastly, even for large initial data, extinction may still occur if the initial data is supported within a bistable patch that is not favorable, see Theorem~\ref{thmEXT2}.

The main difficulty of the present patchy model~\eqref{eq}-\eqref{patch} lies in the interaction between two fundamentally different types of nonlinear dynamics, namely monostable (KPP-type) and bistable dynamics, within a spatially periodic heterogeneous environment. In KPP-type equations in periodic media (see~\cite{HLZ-2024}), the trivial steady state is linearly unstable, and solutions converge locally uniformly to the unique positive periodic steady state, with spreading speeds characterized by variational formulas involving principal eigenvalues. In contrast, classical bistable equations admit a stable trivial state, and propagation does not always occur; it depends on whether the initial data exceed a critical threshold (see, e.g.,~\cite{DM-2010}).

Another key difficulty lies in understanding the mechanisms governing persistence and extinction. Unlike the KPP setting, where spreading is largely determined by the linearization at the trivial state, the spreading behavior in the present model is influenced not only by the stability of the trivial solution but also by the stability of the periodic steady state, the relative sizes of the patches, and the magnitude of the initial data.

Although this paper has established results on persistence, extinction, asymptotic propagation speeds, and the existence of pulsating traveling waves for the KPP-bistable periodic patch model, several questions remain open. In particular, the presence of bistable nonlinearities suggests the possibility of propagation failure (or blocking) when the relative scales of the KPP and bistable patches vary. A rigorous characterization of the parameter regimes leading to such blocking remains an important direction for future work.

In addition, our results naturally raise the question of propagation phenomena in a periodically heterogeneous environment with the following reaction term:
\begin{equation*}\label{fnew}
	\tilde{f}(x,s)=
	\begin{cases}
		f_1(s), & x\in (nl-l_1,nl),\\
		-ms, & x\in (nl,nl+l_2),
	\end{cases}
\end{equation*}
where $n\in\mathbb Z$, $m>0$ denotes the mortality rate in patch 2, and $f_1$ satisfies \eqref{f1}. In this setting, patch 1 remains a favourable habitat governed by KPP dynamics, whereas patch 2 acts as a hostile habitat in which the population experiences a constant death rate. Clearly, $0$ is a stationary solution of \eqref{eq}--\eqref{patch} with $f$ replaced by $\tilde f$. Moreover, if the principal eigenvalue associated with the linearized problem (with $f_s(x,0)$ replaced by~$\tilde f_s(x,0)$ in \eqref{eigenv}) satisfies $\lambda_1<0$, then the arguments used in Proposition~\ref{proSTA} still guarantee the existence of a positive bounded periodic stationary solution of the corresponding elliptic problem \eqref{sta1}. Consequently, persistence of solutions may still occur despite the presence of hostile patches. On the other hand, when $\lambda_1>0$, one expects extinction for sufficiently small initial data, by analogy with Theorem~\ref{thmEXT1}. A natural and challenging problem is to determine the propagation dynamics of solutions in this setting and, in particular, to characterize the spreading speed and derive an explicit formula whenever possible. The analysis appears to be substantially different from the KPP-bistable patch structure considered in the present paper. Indeed, the hostile patches no longer admit any positive equilibrium, and therefore several arguments relying on the assumptions of bistable patch are no longer applicable. Understanding how the mortality rate~$m$, the patch sizes, and the initial data influence persistence, extinction, and spreading phenomena remains an interesting direction for our future work.

Furthermore, the homogenization limit, in which the patch lengths $l_1$ and $l_2$ simultaneously tend to zero, presents an intriguing challenge. Determining whether the hybrid system converges to an effective KPP-type, bistable, or intermediate equation is of significant theoretical interest. Similarly, investigating the propagation dynamics under extreme diffusion regimes, namely, fast diffusion ($d_1\to\infty$ and $d_2\to\infty$) and slow diffusion ($d_1\to0$ and $d_2\to0$) would further clarify the interplay between spatial heterogeneity and nonlinear response.

\medspace

\textbf{Outline of the paper.} 
In Section~\ref{SecPER}, we investigate the persistence of solutions to system~\eqref{eq}-\eqref{patch}, including the proofs of Theorems~\ref{thmPERS} and~\ref{thmNOEXT}. Our analysis is based on the study of an associated eigenvalue problem and the construction of suitable lower solutions via the comparison principle. In particular, we obtain both uniform and local persistence results. Section~\ref{Sec-SP-TW} is devoted to the study of asymptotic spreading behavior and the existence of pulsating traveling waves, distinguishing between the stable and unstable cases of the trivial steady state.  In this section, we also provide the proofs of Theorems~\ref{thmSP},~\ref{thmTW} and~\ref{bisTW}. Finally, in Section~\ref{Sec-EXT}, we apply Theorems~\ref{thmEXT1} and~\ref{thmEXT2} to characterize population extinction under two distinct scenarios.


\section{Persistence: proofs of Theorems~\ref{thmPERS} and~\ref{thmNOEXT}}\label{SecPER}

This section focuses on the persistence of solutions to system~\eqref{eq}-\eqref{patch}. To this end, the Dirichlet principal eigenvalue problem plays a central role. For the reader's convenience, we summarize the relevant results from~\cite[Section 4.2]{HLZ-2024} below. For any $R>0$, there exist a unique real number (principal eigenvalue) $\lambda_1^R$ and a unique nonnegative continuous function (principal eigenfunction) $\psi^R$ on $[-R, R]$, such that $\psi^R|_{\bar{I}}\in C^{\infty}(\bar{I})$ for each patch $I$ in $[-R,R]$, satisfying 
\begin{equation}\label{Dirichlet-eig}
	\begin{cases}
		-d(x)(\psi^R )^{''}(x)-f_s(x, 0)\psi^R(x)=\lambda_1^R\psi^R(x),  & x \in (-R,R) \backslash S, \\
		\psi^R(x^{-}) =\psi^R(x^{+}),\quad (\psi^R)'(x^{-})=\sigma( \psi^R)'(x^{+}), & x =nl\in(-R,R), \\
		\psi^R(x^{-}) =\psi^R(x^{+}), \quad\sigma (\psi^R)'(x^{-})=(\psi^R)'(x^{+}), & x =nl+l_2\in(-R,R),\\
		\psi^R>0 \text{ in }(-R,R), \psi^R(\pm R)=0,\|\psi^R\|_{L^{\infty}(-R,R)}=1.
	\end{cases}
\end{equation}
Moreover, Lemmas 4.3, 4.4, and 4.5 in~\cite{HLZ-2024} are collected below, indicating the relationships among the different notions of principal eigenvalues introduced above.
\begin{lemma}[{\rm \cite[Lemmas 4.3-4.5]{HLZ-2024}}]\label{lem-k}
Let $(\lambda_1, \phi)$ be the principal eigenpair of the eigenvalue problem~\eqref{eigenv}. For any $R > 0$, let $(\lambda_1^R, \psi^R)$ be the principal eigenpair of the Dirichlet eigenvalue problem~\eqref{Dirichlet-eig}. Then the following properties hold: 
	\begin{itemize}
		\item[(i)] For any $R>0$, one has $\lambda_1^R > \lambda_1$.
		\item[(ii)] The function $R \mapsto \lambda_1^R$ is decreasing for $R>0$, and
		$$
		\lim_{R\to +\infty}\lambda_1^R=\lambda_1.
		$$
	\end{itemize}
\end{lemma}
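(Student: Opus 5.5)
We prove the two items of Lemma~\ref{lem-k} by comparing eigenfunctions through a weighted Green identity that absorbs the interface conditions. On each patch $I$ we multiply the equation by a weight $w(x)/d(x)$, where $w$ is piecewise constant: $w=1$ on type-1 patches and $w=\sigma$ on type-2 patches (or the reciprocal, whichever makes the interface terms cancel). This turns $L_0$ into a formally symmetric operator. Indeed, for two functions $u,v$ satisfying the transmission conditions of~\eqref{eigenv}, summing $\int_I (w/d)\,(-d u'' v + d u v'')$ over the patches leaves only boundary terms $[w(u'v-uv')]$. At a point $x\in S_1$ the jump of these terms is $w(x^-)u'(x^-)-w(x^+)u'(x^+)$ multiplied by $v$, and the condition $u'(x^-)=\sigma u'(x^+)$ makes it vanish if $w(x^-)\sigma=w(x^+)$. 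At $S_2$ the condition $\sigma u'(x^-)=u'(x^+)$ is consistent with the same choice. We fix this weight and use the bilinear identity throughout.

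\textbf{Item (i).} Take $R>0$ and let $\psi=\psi^R$, extended by $0$ outside $(-R,R)$, and let $\phi$ be the periodic eigenfunction, which is positive on $[-R,R]$. Apply the weighted identity on $(-R,R)$:
\[
(\lambda_1^R-\lambda_1)\int_{-R}^R \frac{w}{d}\,\psi\phi=\Big[w(\psi'\phi-\psi\phi')\Big]_{-R}^{R}.
\]
Interior interface contributions cancel by the choice of $w$. Since $\psi(\pm R)=0$, the right-hand side equals $w\,\psi'(R^-)\phi(R)-w\,\psi'(-R^+)\phi(-R)$. The Hopf lemma, applied on the patch adjacent to $\pm R$ where the equation is a smooth ODE, gives $\psi'(R^-)<0$ and $\psi'(-R^+)>0$. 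The right-hand side is therefore negative. The integral on the left is positive, so with the correct sign convention we get $\lambda_1^R>\lambda_1$. (If $\pm R$ falls on an interface, we take the one-sided derivative from inside; the ODE uniqueness argument, $\psi(R)=0=\psi'(R)\Rightarrow\psi\equiv0$ on that patch, still gives strict sign.)

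\textbf{Monotonicity in (ii).} For $R<R'$ we run the same identity on $(-R,R)$ with $\psi^{R'}$ in place of $\phi$. This is legitimate since $\psi^{R'}>0$ on $[-R,R]$ and satisfies the same transmission conditions there. We obtain $\lambda_1^{R}>\lambda_1^{R'}$, so $R\mapsto\lambda_1^R$ is decreasing and bounded below by $\lambda_1$, and the limit $\mu\ge\lambda_1$ exists.

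\textbf{The limit, which is the main obstacle.} Showing $\mu\le\lambda_1$ needs a test function. We use a Rayleigh quotient characterization of $\lambda_1^R$ in the weighted space $L^2((-R,R),w/d\,dx)$ with form $\int w\,u'^2-\int (w/d) f_s(x,0)u^2$ on $H^1_0$; the interface conditions are natural for this form. We then plug in $u=\phi\,\chi_R$, where $\chi_R$ is a cutoff equal to $1$ on $[-R+\sqrt R,\,R-\sqrt R]$ with $|\chi_R'|\le C/\sqrt R$. Integrating by parts using the eigen-equation for $\phi$ gives
\[
\lambda_1^R\le\lambda_1+\frac{\int w\,\phi^2\chi_R'^2}{\int (w/d)\phi^2\chi_R^2}\le\lambda_1+\frac{C}{R}.
\]
Here the numerator is $O(\sqrt R\cdot 1/R)$ and the denominator is of order $R$, using $\inf\phi>0$ by periodicity. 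Hence $\mu=\lambda_1$. The delicate points are checking the variational characterization with discontinuous weights, and verifying that the cross terms from $\chi_R$ cancel through the weighted identity. Alternatively, one can argue by compactness: suppose $\mu>\lambda_1$, normalize $\psi^R$ at a maximum point and pass to a locally uniform limit. This gives a positive solution on $\R$ with eigenvalue $\mu$, contradicting the characterization of $\lambda_1$ as the generalized principal eigenvalue.
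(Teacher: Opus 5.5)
The paper gives no proof of this lemma; it imports the three facts from \cite[Lemmas 4.3--4.5]{HLZ-2024}. Your argument therefore has to stand as a self-contained proof, and it does, with one sign error to fix. The weight is right. At both $S_1$ and $S_2$ the transmission condition says that the derivative on the type-1 side equals $\sigma$ times the derivative on the type-2 side. So with $w=1$ on type-1 and $w=\sigma$ on type-2 patches, $w\phi'$ is continuous and every interior interface term cancels. The ODE-uniqueness form of the Hopf lemma at $\pm R$ is fine. The variational step is also sound:
\begin{itemize}
\item the form $\int w u'^2-\int \frac{w}{d}f_s(x,0)u^2$ on $H^1_0(-R,R)$ generates exactly $L_0$ with the transmission and Dirichlet conditions, since membership in the operator domain forces $wu'$ to be continuous;
\item the resolvent is compact;
\item a positive eigenfunction must belong to the bottom of the spectrum, by orthogonality in $L^2(\frac{w}{d}\,dx)$, so $\lambda_1^R$ is the minimum of your Rayleigh quotient.
\end{itemize}
The ground-state identity $\int w(\phi\chi_R)'^2=\int w\phi'(\phi\chi_R^2)'+\int w\phi^2\chi_R'^2$ then disposes of the cross terms. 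With your cutoff it even gives $\lambda_1^R\le\lambda_1+CR^{-3/2}$.

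The sign in item (i) is a computational slip, not a convention issue. From $-d\psi''=(\lambda_1^R+f_s)\psi$ and $-d\phi''=(\lambda_1+f_s)\phi$ one gets
\[
(\lambda_1^R-\lambda_1)\int_{-R}^{R}\frac{w}{d}\,\psi\phi=\Big[w(\psi\phi'-\psi'\phi)\Big]_{-R}^{R}=-w\,\psi'(R^-)\phi(R)+w\,\psi'(-R^+)\phi(-R)>0 .
\]
Your display, with $\psi'\phi-\psi\phi'$, would instead yield $\lambda_1^R<\lambda_1$. The same correction applies verbatim to the monotonicity step.

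Drop the compactness ``alternative'' at the end; it is not an independent proof. Given that compactness, the claim that no positive solution on $\R$ exists with eigenvalue above $\lambda_1$ is equivalent to $\lambda_1^R\to\lambda_1$, which is what you are proving. Moreover, the maxima of $\psi^R$ could drift toward $\pm R$ and produce a half-line limit.

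Compare your route with a maximum-principle proof, which slides $\kappa\phi$ onto $\psi^R$ and applies the strong maximum principle and Hopf lemma across interfaces, as in the $\kappa^*$ arguments used elsewhere in this paper. That approach gives (i) and monotonicity without self-adjointness and would survive drift terms. However, for the limit it typically needs compactness plus a separate characterization of $\lambda_1$ through positive solutions on $\R$. Your symmetrization by the weight $w/d$ uses the special structure here: one dimension, no drift, and flux-ratio transmission conditions. In exchange, one identity yields (i), strict monotonicity, and an explicit rate for $\lambda_1^R\to\lambda_1$.
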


\medspace

\begin{proof}[Proof of Theorem~\ref{thmPERS}]
Let $u$ be the solution of~\eqref{eq}-\eqref{patch} with a nonnegative, continuous, and compactly supported initial datum $u_0 \not\equiv 0$.

First of all, Lemma~\ref{lem-k} implies that there is $R>0$ such that
\begin{align*}\label{lambda1}
\lambda_1^R<\frac{\lambda_1}{2}<0,
\end{align*}
where $(\lambda_1^R,\psi^R)$ denotes the principal eigenpair of~\eqref{Dirichlet-eig}. 
Since $\left.f(x, \cdot)\right|_I=f_i$ is of class $C^1(\mathbb{R})$ for each $x \in \mathbb{R} \backslash S$ belonging to a patch $I$ of type $i \in\{1,2\}$, it follows that there exists $\kappa_0>0$ small enough such that
$$
f(x, \kappa \psi^R(x)) \geq \kappa \psi^R(x) f_s(x, 0)+\frac{\lambda_1}{2} \kappa \psi^R(x),\quad x\in(-R,R)\backslash S
$$
 for all $\kappa \in(0, \kappa_0]$. Therefore, for any given $\kappa\in(0,\kappa_0]$,  the function $\kappa \psi^{R}$ satisfies
\begin{equation*}
\begin{aligned}
-d(x)\kappa (\psi^{R})^{\prime \prime}(x)-f(x, \kappa \psi^{R}(x)) \leq&-d(x) \kappa (\psi^{R})^{\prime \prime}(x)-\kappa \psi^{R}(x) f_s(x, 0)-\frac{\lambda_1}{2} \kappa \psi^{R}(x)\\
=&\frac{\lambda_1}{2} \kappa \psi^{R}(x)<0
\end{aligned}
\end{equation*}
for all $x \in (-R,R)\backslash S$, together with  the interface conditions in~\eqref{Dirichlet-eig}.  Fix now $\kappa^*\in(0,\kappa_0]$ such that  $\kappa^* \psi^{R}<u(1,\cdot)$ in $\R$.

Let $v(t,x)$ be the solution to the Cauchy problem~\eqref{eq}-\eqref{patch} for $(t,x)\in(0,+\infty)\times\R$ with initial datum $v_0$ given by 
$$
v_0(x) = 
\begin{cases}
	\kappa^* \psi^R(x), & x \in [-R, R],\\
	0, & \text{otherwise}.
\end{cases}
$$
Then, 
\begin{equation}\label{M1}
	v_0(x)<u(1,x)\leq M_1:=\max \left(K_1, K_2,\left\|u_0\right\|_{L^{\infty}(\mathbb{R})}\right)\quad \text{ for}~x\in\R.
\end{equation}
   Applying the comparison principle (Proposition~\ref{proCP}), we obtain that
\begin{equation}\label{vtr}
	v(t,x)< u(1+t, x)\leq M_1~~~~~\text{for}~~(t,x)\in(0,+\infty)\times\R.
\end{equation}
 
 On the other hand, since $v_0(x)$ is a generalized subsolution of system~\eqref{eq}-\eqref{patch},  it immediately follows that for any given $\tau>0$, there holds $v_0(x)<v(\tau,x)$ for $x\in\R$, which together with the comparison principle (Proposition~\ref{proCP}) implies that $v(t,x)<v(t+\tau,x)$ for  $(t,x)\in(0,+\infty)\times\R$. Namely, the function $v(t,x)$ is strictly increasing with respect to $t$ on $(0,+\infty)\times\R$. 
From the Schauder estimates in~\cite[Theorem 2.2]{HLZ-2024}, the solution $v(t, x)$ converges as $t \to +\infty$ locally uniformly for $x\in\mathbb{R}$ to a positive bounded solution $q(x)$ of~\eqref{sta1}. Furthermore, $v(t,\cdot)|_{\bar{I}}\to q|_{\bar{I}}$ in $C^2(\bar{I})$ for each patch $I\subset\R$ as $t\to+\infty$.
Thus, by passing~\eqref{vtr} to the limit as $t\to+\infty$, we have that 
\begin{equation*}\label{vtr1}
0<v_0(x)<q(x)\leq\liminf_{t\to+\infty}u(t,x)~~~\text{for}~x\in\R.
\end{equation*}
We further infer from Proposition~\ref{proSTA} that $\inf_{x\in \R}q(x)>0$, which leads to 
\begin{equation}\label{thm-per}
\inf_{x\in \R}\left(\liminf_{t\to+\infty}u(t,x)\right)\geq \inf_{x\in \R}q(x)>0.
\end{equation}
This completes the proof of Theorem~\ref{thmPERS}.
\end{proof}

Next, we investigate the local persistence of solutions to~\eqref{eq}-\eqref{patch}. We mainly use a carefully constructed subsolution and the comparison principle to prove Theorem~\ref{thmNOEXT}. We begin with the following auxiliary lemma, which establishes the existence of solutions to elliptic equations in large intervals; see~\cite[Theorem A]{BL-1980} and~\cite[Lemma 4.2]{HLZ-2022}.

\begin{lemma}\label{lem-Phi2}
	Assume that $\int_{0}^{K_2}f_2(s) \mathrm{d}s>0$. Then there exist $R>0$ and a function $\phi_2$ of class $ C^2([-R,R])$ such that
	\begin{align}
		\label{4.53}
		\begin{cases}
			\ d_2\phi_2''+f_2(\phi_2)=0~~&\text{in}~[-R,R],\cr
			\ 0\le \phi_2<K_2~~&\text{in}~[-R,R],\cr
			\ \phi_2(\pm R)=0,~~&\cr
			\ \displaystyle\mathop{\max}_{[-R,R]}\phi_2=\phi_2(0)>\theta.
		\end{cases}
	\end{align}
\end{lemma}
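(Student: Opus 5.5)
We will construct $\phi_2$ by the classical one-dimensional phase-plane (shooting) argument for the autonomous ODE $d_2\phi''+f_2(\phi)=0$. Set $F_2(s)=\int_0^s f_2(\tau)\,\mathrm{d}\tau$. The assumption $\int_0^{K_2}f_2>0$, i.e. $F_2(K_2)>0$, together with $F_2<0$ on $(0,\theta]$ and $F_2$ increasing on $(\theta,K_2)$, gives a unique $\beta\in(\theta,K_2)$ with $F_2(\beta)=0$. We have $F_2(s)<0$ for $s\in(0,\beta)$ and $F_2(s)>0$ for $s\in(\beta,K_2]$.

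Fix any $\rho\in(\beta,K_2)$ and solve $d_2\phi''+f_2(\phi)=0$ with $\phi(0)=\rho$ and $\phi'(0)=0$. By evenness of the equation and uniqueness, the solution is even. Energy conservation gives
\[
\frac{d_2}{2}\phi'(x)^2=F_2(\rho)-F_2(\phi(x)).
\]
Since $f_2(\rho)>0$, we have $\phi''(0)<0$, so $\phi$ decreases for small $x>0$. As long as $\phi\in(0,\rho)$, the right-hand side stays positive, because $F_2(\phi)<F_2(\rho)$. To see this, note that on $[0,\beta]$ we have $F_2\le 0<F_2(\rho)$, and on $(\beta,\rho)$ the function $F_2$ is strictly increasing. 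Hence $\phi'$ cannot vanish, $\phi$ is strictly decreasing on $(0,x_0)$, and
\[
|\phi'|=\sqrt{2(F_2(\rho)-F_2(\phi))/d_2}.
\]

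This speed is bounded below by a positive constant while $\phi\in[0,\rho-\epsilon]$. Near $\rho$, the travel time is finite because $F_2(\rho)-F_2(s)\approx f_2(\rho)(\rho-s)$ with $f_2(\rho)>0$, which gives an integrable singularity. Thus $\phi$ reaches $0$ at the finite point
\[
R:=\int_0^\rho\frac{\sqrt{d_2}\,\mathrm{d}s}{\sqrt{2(F_2(\rho)-F_2(s))}}<+\infty.
\]
Restricted to $[-R,R]$, the function $\phi_2:=\phi$ satisfies every requirement of~\eqref{4.53}:
\begin{itemize}
\item it is of class $C^2$ and solves the equation;
\item $\phi_2(\pm R)=0$;
\item $0\le\phi_2\le\rho<K_2$;
\item its maximum is $\phi_2(0)=\rho>\beta>\theta$.
\end{itemize}

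The main point needing care is the choice $\rho>\beta$. It guarantees $F_2(\rho)>F_2(s)$ for all $s\in[0,\rho)$, so the trajectory neither turns back nor tends asymptotically to an equilibrium. If $\rho\le\beta$, the orbit would oscillate or fail to hit $0$. This is exactly where the hypothesis $\int_0^{K_2}f_2>0$ is used.

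If an arbitrarily large $R$ is wanted later, one lets $\rho\downarrow\beta$. Since $f_2(\beta)>0$, the formula for $R$ remains finite for each $\rho$, but it can be made large by taking $\rho$ close to $K_2$, where $F_2(\rho)-F_2(s)\approx\tfrac12|f_2'(K_2)|(\rho-K_2)^2$ degenerates. This extra flexibility is not needed for the stated lemma.
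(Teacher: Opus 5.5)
Your proof is correct. It takes a different, more elementary route than the paper, which does not prove Lemma~\ref{lem-Phi2} at all. The paper simply quotes it from \cite[Theorem A]{BL-1980} and \cite[Lemma 4.2]{HLZ-2022}. The Berestycki--Lions theorem is a general existence result, valid in any dimension, for positive solutions of $-\Delta u=g(u)$ with zero Dirichlet data in all sufficiently large balls, with maximum above the unstable zero.

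You instead specialize to one dimension and argue directly in the phase plane. You use the first integral $\frac{d_2}{2}\phi'^2+F_2(\phi)=F_2(\rho)$ with $\rho\in(\beta,K_2)$, where $\beta$ is the positive zero of $F_2$, and show that the travel time $R(\rho)$ is finite. This makes the lemma self-contained and shows exactly where $\int_0^{K_2}f_2>0$ is used: it guarantees a level $\rho<K_2$ lying above $\beta$. It also gives information the citation does not state explicitly: $\phi_2$ is even, strictly decreasing on $[0,R]$, and $\phi_2'(\pm R)=\mp\sqrt{2F_2(\rho)/d_2}\neq0$. The cited theorem gives more generality (any dimension, every large radius), none of which the paper needs.

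Your final optional remark contains a slip. Letting $\rho\downarrow\beta$ does send $R(\rho)\to+\infty$, but the cause is the degeneracy at $s=0$: there $F_2(\rho)-F_2(s)\approx F_2(\rho)+\frac12|f_2'(0)|s^2$, with $F_2(\rho)\to0$. As $\rho\uparrow K_2$, the relevant expansion is $F_2(K_2)-F_2(s)\approx\frac12|f_2'(K_2)|(K_2-s)^2$, not one in $(\rho-K_2)^2$. With continuity of $\rho\mapsto R(\rho)$, this would give every sufficiently large $R$, but none of it is needed for the lemma.
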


To prove Theorem~\ref{thmNOEXT}, we take an indirect approach to prove the following result as a first step.

\begin{lemma}\label{lem-nonex1}
	Assume that $\int_{0}^{K_2}f_2(s) \mathrm{d}s>0$. Let  $R>0$ and $\phi_2$ be as in Lemma~$\ref{lem-Phi2}$. Let $u$ be the solution to~\eqref{eq}-\eqref{patch} with a nonnegative continuous and compactly supported initial datum $u_0\not\equiv 0$. If 
	$u(T,\cdot)\geq \phi_2(\cdot-x_0)\text{ in }\R$ for some $T>0$ and $x_0\in\R$, then the conclusion~\eqref{noex} of Theorem~$\ref{thmNOEXT}$ holds true.
\end{lemma}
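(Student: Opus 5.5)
The plan is to place a time-monotone solution of~\eqref{eq}-\eqref{patch} below $u$ and show that it increases to a nontrivial steady state, exactly as in the proof of Theorem~\ref{thmPERS}. The difference is that the Dirichlet eigenfunction is replaced by the bump $\phi_2$ of Lemma~\ref{lem-Phi2}.

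Set $w_0(x)=\phi_2(x-x_0)$ for $|x-x_0|\le R$ and $w_0(x)=0$ otherwise. The first step is to check that $w_0$ is a generalized (stationary) subsolution of~\eqref{eq}-\eqref{patch}, and this only works under a localisation hypothesis. The equation $d_2\phi_2''+f_2(\phi_2)=0$ is the patch-2 equation. So we need $[x_0-R,x_0+R]$ to be contained in the closure of a bistable patch $(nl,nl+l_2)$; we use the setting of Theorem~\ref{thmNOEXT}, where $l_2$ is large, so that this holds. I will make this explicit, since the lemma is only meaningful in this situation.

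Under that hypothesis the verification is as follows. Inside the patch, $w_0$ is the maximum of the two solutions $0$ and $\phi_2(\cdot-x_0)$, and no interface condition is involved. At the points $x_0\pm R$ the function has a convex kink: $\phi_2'(-R)\ge0$ and $\phi_2'(R)\le0$, with the outer derivatives equal to $0$. This is the correct direction for a generalized subsolution. If $x_0\pm R$ happens to be an interface point, $w_0$ and its derivatives vanish on the side outside the patch. The one-sided inequality needed in the interface condition then still has the correct sign, and I will check this carefully.

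Let $w$ be the solution of~\eqref{eq}-\eqref{patch} with initial datum $w_0$. Since $w_0$ is a subsolution, the comparison principle (Proposition~\ref{proCP}) gives $w(\tau,\cdot)\ge w_0$ for every $\tau>0$, and hence $w(t+\tau,\cdot)\ge w(t,\cdot)$. So $w$ is nondecreasing in $t$. It is also bounded above by $\max(K_1,K_2,\|w_0\|_\infty)$. By the Schauder estimates of~\cite[Theorem 2.2]{HLZ-2024}, $w(t,\cdot)$ converges locally uniformly as $t\to+\infty$ (and in $C^2(\bar I)$ on each patch) to a bounded nonnegative solution $\tilde q$ of~\eqref{sta1}. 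This limit satisfies $\tilde q\ge w_0$, so in particular $\tilde q(x_0)\ge\phi_2(0)>\theta>0$. On the other hand, $u(T,\cdot)\ge w_0$, and comparison gives $u(T+t,\cdot)\ge w(t,\cdot)$ for all $t\ge0$. Therefore $\liminf_{t\to+\infty}u(t,x)\ge\tilde q(x)$ for every $x\in\mathbb R$.

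It remains to show that $\tilde q>0$ everywhere. Then, by continuity, $\inf_H\tilde q>0$ on every bounded interval $H$, which is~\eqref{noex}.

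Write the equation as $-d\tilde q''=c(x)\tilde q$, where $c(x)=f(x,\tilde q(x))/\tilde q(x)$ is bounded by the $C^1$ bounds on $f_i$. Suppose $\tilde q$ vanishes at some point. If this point lies inside a patch, uniqueness for the ODE (a zero minimum forces $\tilde q'=0$ there) gives $\tilde q\equiv0$ on that patch. If it is an interface point, then $\tilde q'(x^-)\le0\le\tilde q'(x^+)$ because the point is a minimum. The transmission condition, with $\sigma>0$, forces both one-sided derivatives to be $0$, and the ODE again gives $\tilde q\equiv0$ on both adjacent patches. Propagating this patch by patch yields $\tilde q\equiv0$, contradicting $\tilde q(x_0)>0$. (Alternatively, one could invoke Proposition~\ref{proSTA}(ii) when $\lambda_1<0$, but the ODE argument does not need the sign of $\lambda_1$.)

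The main obstacle is the subsolution step, namely making sure the support of the bump stays in a bistable patch. The interface-positivity argument is the second delicate point. The monotone convergence is routine given the earlier results.
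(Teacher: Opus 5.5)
Your proposal is correct and follows the paper's proof. The paper also takes the truncated bump $\phi_2(\cdot-x_0)$, with $[x_0-R,x_0+R]$ assumed inside a bistable patch, as the initial datum of a time-increasing solution $w\le u(T+\cdot,\cdot)$ and passes to a positive steady-state limit. It additionally bounds $u$ from above by the solution starting from $M_1$, which is not needed, and it gets positivity of the limit from the strong comparison principle ($\tilde q\ge w(1,\cdot)>0$) instead of your ODE-uniqueness argument.
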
	
\begin{proof}
	Let  $R>0$, $\phi_2\in C^2([-R,R])$, $T>0$, $x_0\in\R$, and $u_0$ be as in the statement. We choose $x_0\in\R$ such that $[x_0-R,x_0+R]\subset$ patch 2. Let $v$ and $w$ be, respectively, the solutions to~\eqref{eq}-\eqref{patch} with initial datum $v_0=M_1$, where $M_1$ is given by~\eqref{M1}, and $w_0$ is given by 
	\begin{equation*}\label{w0}
		w_0(x)=
		\begin{cases}
			\phi_2(x-x_0),& x\in [x_0-R,x_0+R],\\
			0,& x\in\mathbb{R}\setminus[x_0-R,x_0+R].
		\end{cases}
	\end{equation*}
	Then Proposition~\ref{proCP} yields $0<w(t,x)\le u(t+T,x)\le v(t,x)\le M_1$ for all $t> 0$ and  $x\in\mathbb{R}$. Moreover, as in the proof of Theorem~\ref{thmPERS}, $w$ is increasing with respect to $t$ in $[0,+\infty)\times\R$, whereas $v$ is nonincreasing with respect to $t$ in $[0,+\infty)\times\R$. From the parabolic estimates of~\cite[Theorem 2.2]{HLZ-2024}, the functions $w(t,\cdot)$ and $v(t,\cdot)$ converges as $t\to+\infty$, locally uniformly in $\R$, to positive continuous and bounded solutions $p_1$ and $p_2$ of~\eqref{sta1} such that $w(t,\cdot)|_{\bar{I}}\to p_1|_{\bar{I}}$ and $v(t,\cdot)|_{\bar{I}}\to p_2|_{\bar{I}}$ in $C^2(\bar{I})$ for each patch $I\subset\R$ as $t\to+\infty$, respectively. Moreover, 
	\begin{equation*}
		\label{4.54}
		0\le w_0<p_1\le \liminf_{t\to+\infty} u(t,\cdot)\le \limsup_{t\to+\infty} u(t,\cdot)\le p_2\le M_1,~~\text{locally uniformly in}~\mathbb{R}.
	\end{equation*}
	This implies that 
	\begin{equation*}
		\inf_{x\in H}\left(\liminf_{t\to+\infty}u(t,x)\right)>0 
	\end{equation*}
	for all bounded interval $H\in\R$. This completes the proof of Lemma~\ref{lem-nonex1}.
\end{proof} 

\vs

Now we are in a position to prove Theorem~\ref{thmNOEXT}.

\medspace

\begin{proof}[Proof of Theorem~$\ref{thmNOEXT}$]
	Fix any $\eta>0$. Let $l_2^{**}\ge2$ be a constant to be fixed later, and let $x_{l_2^{**}}\in\R$. Let $u$ be the solution to~\eqref{eq}-\eqref{patch} with a nonnegative continuous and compactly supported initial datum $u_0\not\equiv 0$ satisfying $u_0\ge \theta+\eta$ in an interval of size $l_2^{**}$ included in patch $2$, say $(x_{l_2^{**}}-l_2^{**}/2,x_{l_2^{**}}+l_2^{**}/2)$. Now let
	$u_{l_2^{**}}$ be the solution to the Cauchy problem~\eqref{eq}-\eqref{patch} with initial datum 
	\begin{align*}
		u_{{l_2^{**}}}(0,\cdot)=\begin{cases}
			\ \theta+\eta~&\text{in}~[x_{l_2^{**}}-l_2^{**}/2+1,x_{l_2^{**}}+l_2^{**}/2-1],\cr
			\ 0 &\text{in }\R\setminus(x_{l_2^{**}}-l_2^{**}/2,x_{l_2^{**}}+l_2^{**}/2),
		\end{cases}
	\end{align*}	
	and $u_{l_2^{**}}(0,\cdot)$ is affine in $[x_{l_2^{**}}-l_2^{**}/2,x_{l_2^{**}}-l_2^{**}/2+1]$ and in $[x_{l_2^{**}}+l_2^{**}/2-1,x_{l_2^{**}}+l_2^{**}/2]$. Since $u_{l_2^{**}}$ is uniformly bounded by $M_2:=\max\{K_1, K_2, \theta+\eta\}$ (independently of $l_2^{**}$), we may apply local (around the position $x_{l_2}^{**}$) parabolic estimates in bistable patch to conclude that, for any $A>0$,
	\begin{equation}
		\label{local convergence}
		u_{l_2^{**}}(t,x)\to\xi(t)~\text{as}~{l_2^{**}}\to+\infty~~\text{locally in}~t\ge 0,~\text{uniformly in}~x\in[x_{l_2^{**}}-A,x_{l_2^{**}}+A],
	\end{equation}
	where $\xi$ is the solution of the following ODE equation: 
	\begin{equation*}
		\begin{cases}
			\xi^{\prime}(t)=f_2(\xi(t)), &t>0,\\
			\xi(0)=\theta+\eta.
		\end{cases}
	\end{equation*}
	
	Let~$R>0$ and $\phi_2\in C^2([-R,R])$ be as in Lemma~\ref{lem-Phi2}, and pick $\varepsilon\in(0,K_2-\phi_2(0))$. By~\eqref{f2}, we have that
	$$
	\xi(t)\to K_2 \text{ as }t\to+\infty.
	$$
	Thus there is $T>0$ such that $\xi(T)\ge \phi_2(0)+\varepsilon$. By~\eqref{4.53} and~\eqref{local convergence}, we may choose $l_2^{**}\in(\max(2R,2),+\infty)$ sufficiently large such that, whenever $[x_{l_2^{**}}-R,x_{l_2^{**}}+R]$ is contained in patch 2, it holds that
	$$u_{l_2^{**}}(T,\cdot)>\xi(T)-\varepsilon\ge \phi_2(0)\ge\phi_2(\cdot-x_{l_2^{**}})~~\text{in}~[x_{l_2^{**}}-R,x_{l_2^{**}}+R].$$
	It follows from comparison principle that 
	\begin{equation}\label{uT}
		u(T,\cdot)\ge u_{l_2^{**}}(T,\cdot)>\phi_2(0)\ge\phi_2(\cdot-x_{l_2^{**}})~~\text{in}~[x_{l_2^{**}}-R,x_{l_2^{**}}+R].
	\end{equation}
	Furthermore, since $u_0\not\equiv0$ and $\phi_2(\cdot-x_{l_2^{**}})$ is extended by $0$ outside $[x_{l_2^{**}}-R,x_{l_2^{**}}+R]$, by the strong maximum principle and~\eqref{uT}, we obtain
	$$
	u(T,\cdot)>\phi_2(\cdot-x_{l_2^{**}})\text{ in }\R.
	$$
	The conclusion of Theorem~\ref{thmNOEXT} then follows from Proposition~\ref{proCPBD} and Lemma~\ref{lem-nonex1} applied with initial datum $\psi(\cdot-x_{l_2^{**}})$ (extended by $0$ outside $[x_{l_2^{**}}-R,x_{l_2^{**}}+R]$).
\end{proof}


\section{Spreading speed and pulsating traveling waves: proofs of Theorems~\ref{thmSP},~\ref{thmTW} and~\ref{bisTW}}\label{Sec-SP-TW}


\subsection{Proofs of Theorems~\ref{thmSP} and~\ref{thmTW}}

In this subsection, we give the proofs of Theorems~\ref{thmSP} and~\ref{thmTW}. The
main approach is based on the abstract dynamical systems theory for monostable evolution systems established in the seminal work in \cite{W-2002} and further developed in~\cite{LZ-2007,LZ-2010}.  

\medspace

\begin{proof}[Proof of Proposition~\ref{proSTA}(iii)]
	The proof strategy follows a logic similar to that of Theorem~\ref{thmPERS}. We shall therefore focus on the key differences and only briefly outline the overlapping steps.
	
Assume that $0$ is an unstable solution of~\eqref{sta1}, i.e. $\lambda_1<0$, where $\lambda_1$ is the principal eigenvalue of the eigenvalue problem~\eqref{eigenv}. Suppose that $p(x)$ is a positive bounded solution to~\eqref{sta1} on $\mathbb{R}$.   

 Let $v$ be defined as in the proof of Theorem~\ref{thmPERS}, and take $u(t,x)=p(x)$ for $(t,x)\in\R_+\times\R$ for notional convenience. Repeating the arguments in the proof of Theorem~\ref{thmPERS} gives that
\begin{equation}\label{2.3-1}
	0<v(t,x)< u(1,x)=p(x)\leq \max \left(K_1, K_2,\left\|p\right\|_{L^{\infty}(\mathbb{R})}\right)
\end{equation}
for $(t,x)\in(0,+\infty)\times\R$. Moreover, since $v(t,x)$ is increasing with respect to $t$ on $(0,+\infty)\times\R$, it follows from parabolic estimates in~\cite[Theorem 2.2]{HLZ-2024} that $v(t,x)$ converges as $t\to+\infty$ locally uniformly in $\R$ to  a positive continuous and bounded solution $q(x)$ of~\eqref{sta1} such that $v(t,\cdot)|_{\bar{I}}\to q|_{\bar{I}}$ in $C^2(\bar{I})$ for each patch $I\subset\R$ as $t\to+\infty$. Thus,
passing to the limit $t\to+\infty$ in~\eqref{2.3-1}, we have that
$$
0<  q(x)\leq p(x) \quad \text{for}~x\in\R.
$$
Since the positive  bounded solution $p(x)$ to~\eqref{sta1} is chosen arbitrarily, it follows that $q(x)$ is the minimal one among them.  Furthermore, if $q$ is a minimal positive bounded solution of~\eqref{sta1}, so is the function $x\to q(x +l)$. This implies that $q$ is periodic in $\R$.
This completes the proof.
\end{proof}

\vs

Now we are in a position to give the proof of Theorem~\ref{thmSP}. 

\medspace

\begin{proof}[Proof of Theorem~\ref{thmSP}] To prove the result, we adopt the method from \cite[Section~5]{LZ-2010} and introduce the following notations:
$\mathcal{H} = \R$, $\mathcal{\tilde{H}}=l\Z$, $X=Y=\R$, $\beta=q$ and $\mathcal{M}=\mathcal{C}_q$, where $q$ is the minimal continuous positive periodic solution of system~\eqref{sta1}, given in Proposition~\ref{proSTA}-(iii), and $\mathcal{C}_q:=\{v\in\mathcal{C}:0\le v\le q\}$. Let $u(t, x; u_0)$ be the unique classical solution to the Cauchy problem~\eqref{eq}-\eqref{patch} with initial condition $u(0, \cdot ; u_0)=u_0 \in \mathcal{C}_q$, given by~\cite[Theorem 2.2]{HLZ-2024}. By the monotonicity in~\cite[Theorem 2.2]{HLZ-2024} and comparision principle then implies that $u(t, \cdot; u_0) \in \mathcal{C}_q$ for every $u_0 \in \mathcal{C}_q$ and $t \geq 0$. Then we can define a family of maps $\left\{Q_t\right\}_{t \geq 0}$ in $\mathcal{C}_q$ by
\begin{equation}\label{Qt}
Q_t(u_0)(x)=u(t, x ; u_0), \quad u_0 \in \mathcal{C}_q, t \geq 0, x \in \mathbb{R}.
\end{equation}
In particular, $Q_0(u_0)=u_0$ for every $u_0 \in \mathcal{C}_q$, and $Q_t(0)=0$ for every $t \geq 0$. 
	Since $q$ is the minimal positive bounded periodic solution of \eqref{sta1} by Proposition \ref{proSTA}(iii), it is also a solution to~\eqref{eq}-\eqref{patch}. By uniqueness, $u(t, \cdot; q)=q$ for each $t \geq 0$, that is, $Q_t(q)=q$. 
It is clear that the family of maps $\left\{Q_t\right\}_{t \geq 0}:\mathcal{C}_q\to \mathcal{C}_q$ satisfies the following properties:
\begin{itemize}
\item [(1)] $Q_0(u_0)=u_0$ for all $u_0 \in \mathcal{C}_q$;
\item [(2)] $Q_{t_1}\left(Q_{t_2}(u_0)\right)=Q_{t_1+t_2}(u_0)$ for all $t_1, t_2 \geq 0$ and for all $u_0 \in \mathcal{C}_q$;
\item [(3)] the map $(t, u_0) \mapsto Q_t(u_0)$ is continuous from $[0,+\infty) \times \mathcal{C}_q$ into $\mathcal{C}_q$, with $\mathcal{C}_q$ equipped with the compact open topology, that is, $Q_{t_m}\left(u_{0,m}\right) \rightarrow Q_t(u_0)$ as $m \rightarrow+\infty$ locally uniformly in $\mathbb{R}$ if $t_m \rightarrow t$ and $u_{0,m} \rightarrow u_0$ locally uniformly in $\mathbb{R}$ as $m \rightarrow+\infty$, with $\left(t_m, u_{0,m}\right) \in[0,+\infty) \times \mathcal{C}_q$.
\end{itemize}
Therefore, $\left\{Q_t\right\}_{t \geq 0}:\mathcal{C}_q\to \mathcal{C}_q$ is a semiflow. We also say that $\left\{Q_t\right\}_{t \geq 0}$ is monotone in $\mathcal{C}_q$ if, for every $t \geq 0, Q_t(u_0) \geq Q_t\left(v_0\right)$ in $\mathbb{R}$ provided $u_0 \geq v_0$ in $\mathbb{R}$, with $u_0, v_0\in \mathcal{C}_q$. As in the proof of~\cite[Proposition 6.1]{HLZ-2024}, one can show that the family $\{Q_t\}_{t\ge0}$ defined in~\eqref{Qt} is a monotone semiflow on $\mathcal{C}_q$.  Furthermore, for every $u_0 \in \mathcal{C}_q, a \in l \mathbb{Z}, t \geq 0$ and $x \in \mathbb{R}$, there holds $Q_t(u_0(\cdot+a))(x)=Q_t(u_0)(x+a)$. 

Moreover, using~\eqref{thm-per} in Theorem~\ref{thmPERS}, we derive that
$$0< q(x)\le\liminf_{t\to+\infty}u(t,x;u_0)\le \limsup_{t\to+\infty}u(t,x;u_0)\le q(x),\quad x\in\R.$$
This implies that 
\begin{equation}\label{Q}
	\lim_{t\to+\infty}|Q_t[u_0](x)-q(x)|=0 \text{ locally uniformly for } x\in\R.
\end{equation}

As a consequence, we conclude that the solution maps $Q_t: \mathcal{C}_q \rightarrow \mathcal{C}_q$ satisfy the following properties:
\begin{itemize}
\item [(E1)] for each $t \geq 0$, $Q_t$ is periodic, that is, $Q_t\left(T_a(u_0)\right)=T_a\left(Q_t(u_0)\right)$ for all $a \in l \mathbb{Z}$ and $u_0 \in \mathcal{C}_q$, where $T_y$ is the translation operator defined by $T_y(\tilde{u}_0)=\tilde{u}_0(\cdot+y)$ for $\tilde{u}_0 \in \mathcal{C}_q$ and $y \in l \mathbb{Z}$;
\item [(E2)] the set $\left\{Q_t\left(\mathcal{C}_q\right): t \geq 0\right\} \subset \mathcal{C}_q$ is uniformly bounded and, for each $t \geq 0, Q_t: \mathcal{C}_q \rightarrow \mathcal{C}_q$ is continuous;
\item [(E3)] for each $t>0$, the map $Q_t: \mathcal{C}_q \rightarrow \mathcal{C}_q$ is compact with respect to the compact open topology (as a consequence of the regularity estimates 
of~\cite[Theorem 2.2]{HLZ-2024});
\item [(E4)] for each $t \geq 0, Q_t$ is order-preserving (i.e., monotone);
\item [(E5)] for each $t>0, Q_t$ admits exactly the two periodic fixed points 0 and $q$ in $\mathcal{C}_q$. Indeed, on the one hand, one knows that $0$ and $p$ are two fixed points; on the other hand, for each $u_0\in \mathcal{C}_q\backslash\{0, p\}$, one has $Q_{mt} (u_0) \to p$ locally uniformly in $\R$ as $m\to+\infty$ by~\eqref{Q} (and even uniformly in $\R$ if $u_0$ is periodic), hence $u_0$ can not be a fixed point of $Q_t$.
\end{itemize}

To prove statement~(i), it suffices to verify that the initial datum $u_0$ satisfies the assumptions of~\cite[Theorem 5.1(1)]{LZ-2010}. More precisely, we need to show that $u_0 \in \mathcal{C}_q$, with
$$
0\leq u_0\leq \varpi\ll q,
$$
for some $l$-periodic function $\varpi\in \mathcal{C}_q$, and that $u_0(x)=0$ for $x$ outside a bounded interval. Since $u_0$ is continuous, compactly supported, and satisfies $0\leq u_0(x)<q(x)$ for $x\in\R,$ it follows from \eqref{thm-per} and the periodicity of $q$ that
$q_{\min}:=\min_{x\in\mathbb R}q(x)>0.$ Consequently,
$$
\epsilon_0:=\inf_{x\in\mathbb R}\{q(x)-u_0(x)\}>0.
$$
Moreover, by the compact support of $u_0$, we have $\epsilon_0\leq q_{\min}$. Choosing $\epsilon\in(0,\epsilon_0]$, we define
$$
\varpi(x):=q(x)-\epsilon,\qquad x\in\mathbb R.
$$
Then $\varpi$ is continuous and $l$-periodic, hence $\varpi\in\mathcal C_q$. Furthermore,
$0\leq u_0\leq \varpi\ll q$ in $\R$, since $0<\epsilon\leq\epsilon_0\leq q_{\min}$.

Therefore, by~\cite[Theorem 5.1(1)]{LZ-2010}, the time-$1$ map $Q_1:\mathcal C_q\to\mathcal C_q$ admits rightward and leftward asymptotic spreading speeds $c_+^*,c_-^*\in\mathbb R$, in the sense that, if $u_0$ is continuous, compactly supported, and satisfies $0\leq u_0(x)<q(x)$ for $x\in\R$, then
\begin{equation}\label{sp1}
	\begin{cases}
		\displaystyle
		\lim_{n\to+\infty,\ x\geq cn}Q_n(u_0)(x)=0,
		& c>c_+^*,\\[1ex]
		\displaystyle
		\lim_{n\to+\infty,\ x\leq -c'n}Q_n(u_0)(x)=0,
		& c'>c_-^*.
	\end{cases}
\end{equation}
We next show that 
$$c_+^*=c_-^*>0.$$
After the spatial translation $x\mapsto x+l_1/2$, the center of patch~1 is shifted to the origin, and system~\eqref{eq}--\eqref{patch} becomes symmetric with respect to $x=0$. Define the reflection operator
$$
\mathcal R[u](x):=u(-x).
$$
Then $\mathcal C_q=\mathcal R[\mathcal C_q],$ and the semiflow $\{Q_t\}_{t\geq0}$ commutes with $\mathcal R$, namely, $Q_t[\mathcal R[u]]=\mathcal R[Q_t[u]].$ Hence, by~\cite[Remark 3.1]{LZ-2010}, we have that
$$
c^*:=c_+^*=c_-^*.
$$

On the other hand, \eqref{Q} implies that $\lim_{t\to+\infty}|u(t,\cdot;u_0)-q|=0$ locally uniformly in $\mathbb R$. Therefore, there exists $T\in\mathbb N$ such that
$u(T,\cdot;u_0)\geq u_0(\cdot\pm l)$ in $\R.$ Using~\cite[Theorem 2.2]{HLZ-2024} together with the monotonicity of the semiflow $\{Q_t\}_{t\geq0}$ in $\mathcal C_q$, we obtain
$u(2T,\cdot;u_0)\geq u_0(\cdot\pm2l)$ in $\R$.
By induction, $u(mT,\cdot;u_0)\geq u_0(\cdot\pm ml)$ in $\R$ for all $m\in\mathbb N.$
In other words, $Q_{mT}(u_0)\geq u_0(\cdot\pm ml)$ in $\R$ for all $m\in\mathbb N$. Combining this property with \eqref{sp1}, we deduce that $c^*\geq\frac{l}{T}>0.$

Finally, applying~\cite[Theorem 5.2(1)]{LZ-2010} together with \eqref{sp1}, we conclude that
$$
\lim_{t\to+\infty,\ |x|\geq ct}Q_t(u_0)(x)=0,
\qquad c>c^*.
$$

(ii) Similarly, using \cite[Theorem 5.1(2)]{LZ-2010} together with Theorem~\ref{thmPERS}, we conclude that the the time-$1$ map $Q_1: \mathcal{C}_q\to \mathcal{C}_q$ admits an asymptotic spreading speed $c^*>0$ in the following sense: if $u_0\in\mathcal C_q$ with $u_0\not\equiv0$,
then
\begin{equation}\label{sp2}
	\lim_{n\to+\infty,\ |x|\leq cn}
	|Q_n(u_0)(x)-q(x)|=0
\end{equation}
for every $0\leq c<c^*$.
Indeed, from the proof of Theorem~\ref{thmPERS} and the definition of $q$, for every initial datum $u_0\in \mathcal{C}_q$ satisfying $u_0\not\equiv0$,
\begin{equation*}
\lim_{t\to+\infty}|u(t,\cdot;u_0)-q|=0 \text{ locally uniformly in } \R.
\end{equation*}
Take any $\delta>0$, such that $\delta<\min_{\R} q$. Let $r_{\delta}>0$ be the constant given by \cite[Theorem 5.1(2)]{LZ-2010}. Then there exists $T>0$ such that
$u(T,x;u_0)\geq \delta$ on $[-r_{\delta}, r_{\delta}]$. Hence the initial datum $u(T,\cdot;u_0)$  satisfies the assumptions of \cite[Theorem 5.1(2)]{LZ-2010}. Therefore,
\begin{equation*}
	\lim_{n\to+\infty,\ |x|\leq cn}	|Q_n(u(T,\cdot;u_0))(x)-q(x)|=0,\quad 0\leq c<c^*.
\end{equation*}
By the semigroup property (2), namely, 
$$
Q_n(u(T,\cdot;u_0))=Q_n(Q_T(u_0))=Q_{n+T}(u_0),
$$
we have that
$$
\lim_{n\to\infty,\ |x|\le cn}
|Q_{n+T}(u_0)(x)-q(x)|=0,\quad 0\leq c<c^*,
$$
which implies that~\eqref{sp2} holds.

Therefore, applying~\cite[Theorem 5.2(2)]{LZ-2010} together with \eqref{sp2}, we conclude that if $u_0(x)\in\mathcal{C}_q$ with $u_0\not\equiv 0$, then
$$
\lim_{t\to+\infty,\ |x|\leq ct}
|Q_t(u_0)(x)-q(x)|=0.
$$
This completes the proof of Theorem~\ref{thmSP}.
\end{proof}

\medspace

\begin{proof}[Proof of Theorem~\ref{thmTW}] By applying~\cite[Theorems~5.2 and~5.3]{LZ-2010}, we directly obtain the existence of time-nonincreasing periodic traveling waves for problem~\eqref{eq}-\eqref{patch} with all and only those speeds $c\geq c^*$. Furthermore, an argument similar to that in the proof of~\cite[Theorem~2.8]{HLZ-2024} shows that these periodic traveling waves are in fact strictly monotone in time. This completes the proof of Theorem~\ref{thmTW}.
\end{proof}


\subsection{Proof of Theorem~\ref{bisTW}}

In this subsection, we present the proof of Theorem~\ref{bisTW}. To establish the existence of pulsating traveling waves, we apply the abstract theory developed in~\cite{FZ-2015} and further employed in~\cite{DHZ-2017}. However, the verification of the abstract assumptions in our setting is nontrivial due to the heterogeneous KPP-bistable structure. Specifically, we verify that the semiflow generated by~\eqref{eq}-\eqref{patch} satisfies the general assumptions for bistable monotone semiflows when $l_1$ is sufficiently small, $l_2$ is sufficiently large, and~$\int_{0}^{K_2}f_2(s)\ds>0$.

To do so, we first need to define precisely the notion of stability of positive periodic steady states. Let $\bar{p}:\R\to[0,p]$ denote a steady state of \eqref{eq}-\eqref{patch}, where $p$ is a positive periodic steady state of~\eqref{eq}-\eqref{patch}. For any $R>0$, let~$\bar{\lambda}_{1}^R(l_1,l_2,\bar{p})$ be the unique real number $\bar{\lambda}$ such that there exists a function $\psi^R:\mathbb{R}\to\mathbb{R}$ satisfying $\psi^R|_{\overline{I}}\in C^\infty(\overline{I})$ for each patch $I$, and
\begin{equation}\label{truncted}
	\begin{cases}
		-d(x)(\psi^R )^{''}(x)-f_s(x, \bar{p})\psi^R(x)=\bar{\lambda}\psi^R(x),  & x \in (-R,R) \backslash S, \\
		\psi^R(x^{-}) =\psi^R(x^{+}),\quad (\psi^R)'(x^{-})=\sigma (\psi^R)'(x^{+}), & x =nl\in(-R,R), \\
		\psi^R(x^{-}) =\psi^R(x^{+}), \quad\sigma (\psi^R)'(x^{-})=(\psi^R)'(x^{+}), & x =nl+l_2\in(-R,R),\\
		\psi^R>0 \text{ in }(-R,R), \psi^R(\pm R)=0,\|\psi^R\|_{L^{\infty}(-R,R)}=1.
	\end{cases}
\end{equation}
The real number $\bar{\lambda}_{1}^R(l_1,l_2,\bar{p})$ is the unique principal eigenvalue of equation~\eqref{truncted}, and $\psi^R$ is the corresponding eigenfunction.  In particular, when $\bar p\equiv 0$, problem~\eqref{truncted} reduces to the truncated eigenvalue problem~\eqref{Dirichlet-eig}. Thus, $\bar{\lambda}_{1}^R(l_1,l_2,0)=\lambda_{1}^R$. 

Furthermore, using the same arguments as in Section~\ref{eigenv}, we obtain a principal eigenvalue
$\bar{\lambda}_1:=\bar{\lambda}_1(l_1,l_2,\bar{p})\in\mathbb{R}$.
More precisely, $\bar{\lambda}_1$ is the unique real number for which there exists a unique continuous function  $\phi:\mathbb{R}\to\mathbb{R}$ satisfying $\phi|_{\overline{I}}\in C^\infty(\overline{I})$ for each patch $I$, and
\begin{equation}\label{eigenv2}
	\begin{cases}
		-d(x)\phi^{\prime\prime}(x) -f_s(x, \bar{p})\phi=\bar{\lambda}_1(l_1,l_2,\bar{p})\phi,  & x \in \mathbb{R} \backslash S, \\
		\phi(x^{-}) =\phi(x^{+}),\quad \phi^{\prime}(x^{-})=\sigma \phi^{\prime}(x^{+}), & x \in S_1, \\
		\phi(x^{-}) =\phi(x^{+}),\,\, \sigma \phi^{\prime}(x^{-})=\phi^{\prime}(x^{+}), & x \in S_2,\\
		\phi(x) \text{ is periodic}, \,\phi>0, \|\phi\|_{L^{\infty}(\R)}=1.
	\end{cases}
\end{equation}
Similarly when $\bar{p}\equiv0$, problem~\eqref{eigenv2} reduces to the eigenvalue problem~\eqref{eigenv} introduced in Section~\ref{eig} and hence $\bar{\lambda}_1(l_1,l_2,0)=\lambda_1.$ Moreover, we have that
\begin{equation}\label{prinbound1}
	-\max_{x\in\mathbb R}
	f_s(x,\bar{p})
	\le
	\bar{\lambda}_1(l_1,l_2,\bar p)
	\le
	-\min_{x\in\mathbb R}
	f_s(x,\bar{p}).
\end{equation}
Indeed, from the first equation of \eqref{eigenv2}, we obtain
\begin{equation}\label{lamphi}
	\bar{\lambda}_1(l_1,l_2,\bar p)=-d(x)\frac{\phi''(x)}{\phi(x)}-f_s(x,\bar p),\quad x\in\R\backslash S.
\end{equation}
Let $x_{\min}$ and $x_{\max}$ be points where $\phi$ attains its minimum and maximum, respectively. If $x_{\min}, x_{\max}\in \mathbb R\setminus S$, then
$$
\phi'(x_{\min})=0,\quad \phi''(x_{\min})\ge0,
\qquad
\phi'(x_{\max})=0,\quad \phi''(x_{\max})\le0.
$$
Using \eqref{lamphi}, we obtain
\begin{equation}\label{lam1}
	\bar{\lambda}_1(l_1,l_2,\bar p)\le-f_s(x_{\min},\bar p),\qquad	\bar{\lambda}_1(l_1,l_2,\bar p)\ge-f_s(x_{\max},\bar p).
\end{equation}
If $x_{\max}\in S$, then by interface conditions, $\phi'(x_{\max}^-)=\phi'(x_{\max}^+)=0.$
Then we have that $\phi''(x_{\max}^-)\leq0$ and $\phi''(x_{\max}^+)\leq0.$ This together with \eqref{lamphi} yields 
\begin{equation}\label{lam2}
	\bar{\lambda}_1(l_1,l_2,\bar p)\ge -f_s(x_{\max}^{\pm},\bar p).
\end{equation}
Similarly, if $x_{\min}\in S$, we obtain
\begin{equation}\label{lam3}
	\bar{\lambda}_1(l_1,l_2,\bar p) \le -f_s(x_{\min}^{\pm},\bar p).
\end{equation}
Therefore, using \eqref{lam1}, \eqref{lam2}, and \eqref{lam3}, we have that \eqref{prinbound1} holds.

 Furthermore, similar to Lemma~\ref{lem-k}, the principal eigenvalue $\bar{\lambda}_{1}^{R}(l_1,l_2,\bar p)$ is decreasing in $R$ and 
$$\lim_{R\to +\infty}\bar{\lambda}_{1}^R(l_1,l_2,\bar{p})=\bar{\lambda}_{1}(l_1,l_2,\bar{p}).$$

The following Proposition~\ref{pro_pl1l2} aims to show that system~\eqref{sta1} admits a linearly stable positive periodic solution $p^*$, provided $\int_{0}^{K_2}f_2(s)\ds>0$ and $l_2$ is sufficiently large.

\begin{proposition}\label{pro_pl1l2}
	Assume that $\int_0^{K_2} f_2(s)\,ds>0$. Then there exists $\hat{l}_2>0$ such that for any $l_1>0$ and $l_2>\hat{l}_2$, system~\eqref{sta1} admits a bounded positive and periodic solution $p^*$. Moreover, $p^*$ is linearly stable in the sense that 
	$\bar{\lambda}_1(l_1,l_2,p^*) \geq 0.$
\end{proposition}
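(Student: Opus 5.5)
Our plan is to obtain $p^*$ as the monotone limit of a solution started from a compactly supported subsolution placed inside a bistable patch. This is the construction used in Lemma~\ref{lem-nonex1}. We will then show that the limit is periodic and is a maximal or minimal object, from which linear stability follows.

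\textbf{Step 1 (existence).} Let $R>0$ and $\phi_2$ be given by Lemma~\ref{lem-Phi2}, and set $\hat l_2:=2R$. For $l_2>\hat l_2$, every bistable patch $(nl,nl+l_2)$ contains a translate $[x_n-R,x_n+R]$ with $x_n:=nl+l_2/2$. Consider the function
\[
\underline w(x):=\sup_{n\in\Z}\tilde\phi_2(x-x_n),
\]
where $\tilde\phi_2$ denotes $\phi_2$ extended by $0$ outside $[-R,R]$. The supports of these translates are disjoint and lie strictly inside the patches $(nl,nl+l_2)$. Hence $\underline w$ is continuous and $l$-periodic, and it satisfies the interface conditions trivially, since it vanishes near $S$. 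It is a generalized subsolution of \eqref{sta1}, being a maximum of subsolutions that each vanish at the endpoints of their supports.

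Let $w$ be the solution of \eqref{eq}-\eqref{patch} with $w(0,\cdot)=\underline w$. By the comparison principle (Proposition~\ref{proCP}, in the generalized form used in Theorem~\ref{thmPERS}), $w$ is nondecreasing in $t$, $l$-periodic in $x$, and bounded by $M:=\max(K_1,K_2)$. By the estimates of \cite[Theorem 2.2]{HLZ-2024}, $w(t,\cdot)$ converges, patchwise in $C^2$, to a periodic solution $p^*$ of \eqref{sta1} with $p^*\ge\underline w$. The strong maximum principle then gives $p^*>0$ everywhere. Note that $l_1>0$ is arbitrary here.

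\textbf{Step 2 (stability).} Suppose for contradiction that $\bar\lambda_1:=\bar\lambda_1(l_1,l_2,p^*)<0$, and let $\phi$ be the periodic eigenfunction of \eqref{eigenv2}. For small $\varepsilon>0$, a $C^1$ expansion of $f(x,\cdot)$ around $p^*$, which is uniform because there are only two patch types and $p^*$ is bounded, gives
\[
-d(p^*-\varepsilon\phi)''-f(x,p^*-\varepsilon\phi)=\varepsilon\phi\bigl(\bar\lambda_1+o(1)\bigr)<0 .
\]
Thus $p^*-\varepsilon\phi$ is a strict periodic subsolution and $p^*+\varepsilon\phi$ is a strict periodic supersolution, and both satisfy the interface conditions. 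We use the supersolution. We need $\underline w\le p^*-\varepsilon\phi$, or a variant of this, in order to reach a contradiction with the minimality of $p^*$ among steady states above $\underline w$.

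We will therefore redefine $p^*$ as the \emph{minimal} periodic steady state lying above $\underline w$. Its existence follows from Step 1: $p^*$ lies below every steady state $\ge\underline w$, by comparing $w(t)$ with such a state. The difficulty is that $\underline w\le p^*-\varepsilon\phi$ may fail, since $\underline w$ and $p^*$ could touch. To avoid this, we will start instead from $\underline w_\delta:=\sup_n\tilde\phi_2^{\delta}(\cdot-x_n)$, where $\tilde\phi_2^\delta$ is a strict subsolution slightly below $\phi_2$. Such a function is obtained from Lemma~\ref{lem-Phi2} applied with a perturbed nonlinearity $f_2-\delta$, which still has positive integral for small $\delta$; replacing $R$ by a slightly larger value, this yields $\underline w_\delta<p^*$ on its support. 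By compactness of a period, $\underline w_\delta\le p^*-\varepsilon\phi$ for small $\varepsilon$. The solution starting from $p^*-\varepsilon\phi$ then increases and converges to a steady state $\tilde p$ with $\underline w_\delta\le\tilde p\le p^*$. Meanwhile, the solution starting from $\underline w_\delta$ converges to a steady state $\le\tilde p$. Its limit is also $\ge\underline w$ by the same comparison, so its minimality forces that limit to equal $p^*$. Hence $p^*\le\tilde p\le p^*$, i.e. $\tilde p=p^*$. This contradicts the fact that $p^*-\varepsilon\phi$ is a strict subsolution lying below $p^*$: the solution from it is strictly increasing but must stay $\le p^*$, which is consistent. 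So we use the opposite direction instead: define $p^*$ as the maximal steady state below $M$, or equivalently work with $p^*+\varepsilon\phi$ as a strict supersolution from above. The solution starting from $p^*+\varepsilon\phi$ decreases to a steady state $\ge p^*$, and it is bounded away from $p^*$ because of the strict inequality. This contradicts minimality if we instead choose $p^*$ as the limit from below of $\underline w_\delta$ and use $p^*-\varepsilon\phi$.

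\textbf{Main obstacle.} The hardest step, where we expect the most care, is to set up the monotone iteration so that the eigenfunction perturbation contradicts an extremality property of $p^*$. We would first try the cleanest version. Take $p^*$ to be the minimal steady state above $\underline w_\delta$. If $\bar\lambda_1<0$, then $p^*-\varepsilon\phi\ge\underline w_\delta$ is a strict subsolution. By the strong maximum principle, the solution starting from it is strictly increasing in $t$, and it converges to a steady state in $[\underline w_\delta,p^*]$ that lies strictly above $p^*-\varepsilon\phi$. By minimality this limit is $p^*$. To close the argument we then compare this monotone convergence with the linearized decay rate $e^{-\bar\lambda_1 t}$: near $p^*$ the difference $p^*-u$ behaves like $e^{-\bar\lambda_1 t}\phi$, which grows when $\bar\lambda_1<0$ and so cannot tend to $0$. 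This gives $\bar\lambda_1\ge0$.
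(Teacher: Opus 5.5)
Your Step~1 is the paper's construction, and the idea you reach at the end is also the paper's: contradict the minimality of $p^*$ among steady states lying above the initial periodic subsolution. However, Step~2 rests on a sign error, and that is why your argument keeps reversing direction without closing. Using $-d\phi''-f_s(x,p^*)\phi=\bar\lambda_1\phi$ and a first-order expansion of $f(x,\cdot)$ at $p^*$ (uniform since $\min_{\R}\phi>0$), one gets
\[
-d\,(p^*-\varepsilon\phi)''-f(x,p^*-\varepsilon\phi)=-\varepsilon\phi\,\bigl(\bar\lambda_1+o(1)\bigr)\quad\text{in }\R\setminus S,
\]
which is \emph{positive} when $\bar\lambda_1<0$. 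So under your contradiction hypothesis, $p^*-\varepsilon\phi$ is a strict supersolution and $p^*+\varepsilon\phi$ is a strict subsolution, the reverse of what you state. Consequently, the solution starting from $p^*-\varepsilon\phi$ does not increase toward $p^*$. Your closing step is also not a proof. The claim that ``$p^*-u$ behaves like $e^{-\bar\lambda_1 t}\phi$'' is an unproved linearization statement. It is moreover incompatible with the monotone increase you assumed one sentence earlier: that increase would require $\bar\lambda_1>0$, while the growth requires $\bar\lambda_1<0$. As written, Step~2 does not establish $\bar\lambda_1(l_1,l_2,p^*)\ge0$.

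With the correct sign the contradiction is immediate, and the perturbed subsolution $\underline w_\delta$ is unnecessary, because $\underline w$ and $p^*$ cannot touch. Indeed, $p^*$ is a steady state with $p^*\ge\underline w$ and $p^*\not\equiv\underline w$, so the strict comparison principle gives $p^*>\underline w$ in $\R$, as in \eqref{wp}. Both functions are continuous and $l$-periodic and $0<\phi\le1$, hence $\underline w\le p^*-\varepsilon\phi$ for $\varepsilon>0$ small. Now compare $w$ with the stationary supersolution $p^*-\varepsilon\phi$, which satisfies the interface conditions. This gives $w(t,\cdot)\le p^*-\varepsilon\phi$ for all $t\ge0$, and letting $t\to+\infty$ yields $p^*\le p^*-\varepsilon\phi$, which is absurd. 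This is exactly the paper's proof. The paper phrases it by letting the solution from $p^*-\varepsilon\phi$ decrease to a steady state $p_\varepsilon$ with $p^*\le p_\varepsilon<p^*$, which amounts to the same comparison.
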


\begin{proof} 
Let  $R>0$ and $\phi_2\in C^2([-R,R])$ be given by Lemma~\ref{lem-Phi2}. Assume  that $\hat{l}_2>2R$, then we can choose $x_0\in(0,l_2)$ such that $[x_0-R,x_0+R]\subset (0,l_2)$. We define the periodic function $\widetilde{w}_0$ by
	\begin{equation}\label{tildew0}
	\widetilde{w}_0(x)=
	\begin{cases}
		\phi_2(x-nl-x_0), & x\in [nl+x_0-R, nl+x_0+R],\, n\in\mathbb Z,\\
		0, & \text{otherwise}.
	\end{cases}
\end{equation}
Since $\left.f(x, \cdot)\right|_I=f_i$ is of class $C^1(\mathbb{R})$ for each $x \in \mathbb{R} \backslash S$ belonging to a patch $I$ of type $i \in\{1,2\}$, it follows that 
\begin{equation*}
	\begin{aligned}
		-d(x)(\widetilde{w}_0)^{\prime \prime}(x)-f(x, \widetilde{w}_0) =&-d_2(\widetilde{w}_0)^{\prime \prime}(x)-f_2( \widetilde{w}_0)=0
	\end{aligned}
\end{equation*}
for $x\in[nl+x_0-R, nl+x_0+R]$ and $n\in\Z$. Thus $\widetilde{w}_0$ is a stationary subsolution of~\eqref{eq}-\eqref{patch}. Set $M:=\max(K_1,K_2)$. Since $0\leq\phi_2<K_2$ in $[-R,R]$, we have that $0\leq\widetilde{w}_0\leq M$ in $\R$. Let $\widetilde{w}$ be the solution to~\eqref{eq}-\eqref{patch} with initial datum $\widetilde{w}_0$ given by \eqref{tildew0}. It is clear that $M$ is a supersolution of~\eqref{eq}-\eqref{patch}. Then by comparison principle (Proposition~\ref{proCP}), we have that $0\leq\widetilde{w}(t,x)\leq M$ for all $t\geq0$ and $x\in\R$. Furthermore, we have that $\widetilde{w}$ is increasing with respect to $t$ in $[0,+\infty)\times\R$, and from parabolic estimates in~\cite[Theorem 2.2]{HLZ-2022} that $\widetilde{w}(t,\cdot)$ converges as $t\to+\infty$, locally uniformly in $\R$, to a bounded positive periodic continuous solution $p^*$ of~\eqref{sta1} such that $\widetilde{w}(t,\cdot)|_{\bar{I}}\to p^*|_{\bar{I}}$ in $C^2(\bar{I})$ for each patch $I\subset\R$. Moreover, 
\begin{equation}\label{wp}
	0\leq\widetilde{w}_0(x)<p^*(x)\leq M~~~\text{for}~x\in\R.
\end{equation}
Thus, there exists a bounded positive and periodic solution $p^*$ of~\eqref{sta1}.

It is left to prove $\bar{\lambda}_1(l_1,l_2,p^*) \geq 0$. Otherwise, suppose that 
$\bar{\lambda}_1(l_1,l_2,p^*) <0$. Let $\phi$ be the unique positive eigenfunction of~\eqref{eigenv2} corresponding to $\bar{\lambda}_{1}(l_1,l_2,p^*)$. For any $\epsilon >0$, define 
$$
\bar{u}_{\epsilon}(x):=p^*(x)-\epsilon\phi(x),\quad x\in\R.
$$
Choose $\epsilon_0>0$ small enough such that $0<\bar{u}_{\epsilon}\le p^*$ in $\R$ for all $0<\epsilon\le\epsilon_0$ and
$$
f(x,p^*)-f(x,\bar{u}_{\epsilon})\geq \partial_sf(x,p^*)\times\epsilon\phi+\frac{\bar{\lambda}_{1}(l_1,l_2,p^*)}{2}\times \epsilon\phi\ \hbox{ in }\R\backslash S.$$
It then follows that for $x\in\R\backslash S$
\begin{equation}\label{supersol1}
	\begin{split}
		\partial_{t}\bar{u}_{\epsilon}-d(x)\partial_{xx}\bar{u}_{\epsilon}-f(x,\bar{u}_{\epsilon}) & =-d(x)\partial_{xx}\bar{u}_{\epsilon}-f(x,p^*)+f(x,p^*)-f(x,\bar{u}_{\epsilon})\vspace{3pt}\\
		&\geq -\bar{\lambda}_{1}(l_1,l_2,p^*)\times\epsilon\phi+\frac{\bar{\lambda}_{1}(l_1,l_2,p^*)}{2}\epsilon\phi>0
	\end{split}
\end{equation}
for all $0<\epsilon\le\epsilon_0$, and $\bar{u}_{\epsilon}$ satisfies the interface conditions in \eqref{sta1}. This implies that $\bar{u}_{\epsilon}$ is a strict supersolution of equation~\eqref{eq}-\eqref{patch}. Let $U_{\epsilon}$ be the solution to~\eqref{eq}-\eqref{patch} with initial datum $\bar{u}_{\epsilon}$ for any fixed $\epsilon\in(0,\epsilon_0]$. Then $U_{\epsilon}$ is decreasing with respect to $t$ in $[0,+\infty)\times\R$ for any fixed $\epsilon\in(0,\epsilon_0]$, and from parabolic estimates in~\cite[Theorem 2.2]{HLZ-2022} again that $U_{\epsilon}(t,\cdot)$ converges as $t\to+\infty$, locally uniformly in $\R$, to a bounded positive periodic continuous solution $p_{\epsilon}$ of~\eqref{sta1} such that $U_{\epsilon}(t,\cdot)|_{\bar{I}}\to p_{\epsilon}|_{\bar{I}}$ in $C^2(\bar{I})$ for each patch $I\subset\R$ as $t\to+\infty$. Since $\|\phi\|_{L^{\infty}(\mathbb{R})}=1$, up to decreasing $\epsilon$ if necessary, it follows from \eqref{wp} that
$$
0\leq \widetilde{w}_0(x)\leq \bar{u}_{\epsilon}(x), \quad\forall x\in \R.
$$ Then by using comparison principle (Proposition A.2) and~\eqref{wp}, we have that
$$
0\leq \widetilde{w}_0(x)<\widetilde{w}(t,x)\leq U_{\epsilon}(t, x)<\bar{u}_{\epsilon}(x)<p^*(x), \quad\forall t>0,\, x\in \R.
$$
Letting $t\to+\infty$ in the above inequality, we obtain for $0<\epsilon\leq\epsilon_0$
$$
0< p^*\leq p_{\epsilon}<p^*\quad\text{ in }\R,
$$
a contradiction. Thus we have that $\bar{\lambda}_1(l_1,l_2,p^*) \geq 0$. This completes the proof of Proposition~\ref{pro_pl1l2}.
\end{proof} 

\begin{remark}\label{rem_stability}
	The linear stability of $p^*$ obtained in Proposition~\ref{pro_pl1l2}
	does not require any restriction on $l_1$.
\end{remark}

Now we further show that the linearly stable bounded positive periodic solution $p^*$ of~\eqref{sta1}, obtained in Proposition~\ref{pro_pl1l2}, is strongly stable from below provided that $l_1$ is sufficiently small, $l_2$ is sufficiently large, and $\int_{0}^{K_2} f_2(s)\ds>0.$ For the definitions of strong stability from above and below, we refer the readers to~\cite[Definitions 2.1 and 4.2]{DHZ-2017}.

\begin{lemma}\label{lem_strong}
	Assume that $\int_0^{K_2} f_2(s) \mathrm{d} s>0$. Then, there exist $\hat{l}_1>0$ and $\hat{l}_2>0$ such that for every $0<l_1<\hat{l}_1$ and $l_2>\hat{l}_2$, the bounded positive periodic solution $p^*$ of~\eqref{sta1}, given in Proposition~\ref{pro_pl1l2}, is strongly stable from below, in the sense that there exists $\eta_0>0$ such that
\begin{equation}\label{strong stable}
	u(t,x;p^*(x)-\eta\phi(x))>p^*(x)-\eta\phi(x),\qquad \forall x\in\R,\, \forall\,\eta\in(0,\eta_0],
\end{equation}
	where $\phi$ is the principal eigenfunction of~\eqref{eigenv2} corresponding to the principal eigenvalue $\bar{\lambda}_1(l_1,l_2,p^*)$.
\end{lemma}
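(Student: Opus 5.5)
The plan is to upgrade the linear stability $\bar\lambda_1(l_1,l_2,p^*)\ge 0$ of Proposition~\ref{pro_pl1l2} to strict positivity, $\bar\lambda_1(l_1,l_2,p^*)>0$, when $l_1$ is small and $l_2$ is large. Once this holds, \eqref{strong stable} follows from the same computation as in \eqref{supersol1}, with the inequalities reversed. Indeed, if $\bar\lambda_1:=\bar\lambda_1(l_1,l_2,p^*)>0$, choose $\eta_0>0$ such that, for all $\eta\in(0,\eta_0]$,
$$f(x,p^*-\eta\phi)\ge f(x,p^*)-\eta\phi\,\partial_sf(x,p^*)-\tfrac{\bar\lambda_1}{2}\eta\phi \quad\text{in } \R\setminus S.$$
This is possible by uniform continuity of $\partial_s f_i$ on compacts and boundedness of $p^*$. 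Then
$$-d\,(p^*-\eta\phi)''-f(x,p^*-\eta\phi)\le \eta\bigl(-\bar\lambda_1+\tfrac{\bar\lambda_1}{2}\bigr)\phi<0,$$
and the interface conditions hold because both $p^*$ and $\phi$ satisfy them. So $p^*-\eta\phi$ is a strict stationary subsolution. By Proposition~\ref{proCP} and the strong maximum principle, $u(t,\cdot;p^*-\eta\phi)>p^*-\eta\phi$ for all $t>0$, which is \eqref{strong stable}.

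The core is therefore showing $\bar\lambda_1>0$. I would use the bound \eqref{prinbound1}, or more precisely a refined version of it, which needs control of $f_s(x,p^*)$. The idea is that, as $l_2\to+\infty$ with $l_1\to0$, $p^*$ restricted to patch 2 should be uniformly close to $K_2$, where $f_2'(K_2)<0$. First I would show that $p^*\ge\phi_2(\cdot-nl-x_0)$, which already exceeds $\theta$ near the centres. Next, using the $C^2$ convergence argument of Theorem~\ref{thmNOEXT} and the translation-invariance of the bistable patch, I would show that on patch 2, away from a neighbourhood of the interfaces of size $O(1)$, $p^*$ is close to $K_2$. This comes from comparing with Dirichlet problems on large intervals, as in \cite{BL-1980}, whose solutions converge to $K_2$ in the interior. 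Near the interfaces and inside the small KPP patch, as $l_1\to0$ the profile of $p^*$ across the thin patch 1 should be nearly constant. Indeed, the equation on patch 1 with $d_1$ fixed and width $l_1\to0$, together with the flux jump conditions, forces $p^*$ to approach a solution of a limiting problem on a half-line pair glued with transmission $\sigma$. That limiting problem has a monotone profile converging to $K_2$.

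Pure pointwise bounds like \eqref{prinbound1} are insufficient, since $f_s$ may be positive near the interfaces. I would therefore use a Rayleigh-type or test-function argument instead. I would construct a positive periodic function $\chi$ with $L_{p^*}\chi\ge c\chi$ for some $c>0$, where $L_{p^*}:=-d\,\partial_{xx}-f_s(x,p^*)$. Take $\chi\equiv1$ deep in patch 2, where $-f_2'(p^*)\ge -f_2'(K_2)/2>0$. Near the patch 1 region, where the potential is bounded by $\max|f_s|$ on a set of width $l_1+O(1)$, use a convex bump of the form $\chi=1+\beta e^{-\gamma\,\mathrm{dist}(x,\text{patch 1})}$, with the gain coming from $-d\chi''$. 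The bump must respect the interface conditions $\chi'(x^-)=\sigma\chi'(x^+)$. A generalized maximum-principle characterization of $\bar\lambda_1$, the periodic analogue of the one used in \cite{HLZ-2024}, then gives $\bar\lambda_1\ge c>0$.

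The main obstacle is this last step, namely controlling $f_s(x,p^*)$ in the transition layer near the interfaces. There $p^*$ may pass through values where $f_2'>0$, and the KPP term $f_1'(p^*)$ may be positive. I would first try to establish the limiting profile description as $l_1\to0$, $l_2\to\infty$: $p^*$ converges in patch 2 to $K_2$ uniformly, with an interface value close to $K_2$ because patch 1 is thin and $f_1\ge0$ there only helps. If that holds, $p^*$ stays in a neighbourhood of $K_2$ everywhere, $f_s(x,p^*)<0$ uniformly, and \eqref{prinbound1} then gives $\bar\lambda_1>0$ directly.
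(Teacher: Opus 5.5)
Your reduction to $\bar\lambda_1(l_1,l_2,p^*)>0$ and the subsolution computation for $p^*-\eta\phi$ are correct and match the end of the paper's proof. The gap is in proving that $\bar\lambda_1>0$, and your closing argument there fails.

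\textbf{The KPP patch defeats the pointwise bound.} Even if $p^*\to K_2$ uniformly, on the KPP patches $f_s(x,p^*)=f_1'(p^*)\approx f_1'(K_2)$. This is positive whenever $f_1$ is still increasing at $K_2$ (e.g.\ logistic $f_1$ with $K_2<K_1/2$). In that case $-\max_{\R}f_s(x,p^*)<0$, so \eqref{prinbound1} gives nothing. Positivity has to come from the vanishing width of patch~1, not from a pointwise sign.

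The paper argues by contradiction and blow-up. Suppose $\bar\lambda_1(l_1^m,l_2^m,p_m^*)=0$ with $l_1^m\to0$ and $l_2^m\to+\infty$. Normalize $\phi_m=1$ at a maximum point and pass to the limit. On the thin patch $\phi_m''$ is bounded. At both ends of that patch, the derivative on the patch-1 side equals $\sigma$ times the derivative on the patch-2 side, so the two factors cancel and the thin patch disappears. The limit solves $-d_2\phi_\infty''=f_2'(K_2)\phi_\infty$ on $\R$ with an interior maximum equal to $1$. This is impossible because $f_2'(K_2)<0$.

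\textbf{Your test function does not work as written.} Convex pieces make $-d\chi''$ a loss, not a gain. The gain must come from a concave cap inside patch~1, whose curvature of order $\sigma\beta\gamma/l_1$ is forced by the transmission conditions. Patch~2 also needs roughly $d_2\gamma^2<-f_2'(K_2)/2$. Even then, the construction only works if $f_2'(p^*)$ is uniformly negative on all of patch~2 up to the interfaces. An $O(1)$ layer where $f_2'(p^*)>0$, which your ``width $l_1+O(1)$'' allows, cannot be absorbed by any bump.

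\textbf{The uniform convergence $p^*\to K_2$ is only conjectured.} Both routes rest on $p^*\to K_2$ uniformly on $\R$ as $l_1\to0$ and $l_2\to+\infty$ (the paper's Step~1). Your picture of the interface limit is also off. Because the two $\sigma$-factors cancel across the vanishing patch, blow-up limits near an interface solve $-d_2v''=f_2(v)$ on all of $\R$ with $v'$ continuous. They are not half-line problems glued by $\sigma$ with a monotone profile.

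What you need is a lower bound that excludes the other bounded solutions: $\theta$, the periodic orbits, and the ground state. You can get it as follows.
\begin{enumerate}
\item Slide $\phi_2(\cdot-y)$ through each bistable patch, starting from \eqref{wp} and applying the strong maximum principle at a first contact point. This gives $p^*\ge\phi_2(0)>\theta$ on $[nl+R,nl+l_2-R]$, uniformly in $l_1,l_2$.
\item A limit taken far from the interfaces has values in $[\phi_2(0),K_2]$. The upper bound comes from comparison with the ODE started at $M$, as in the paper. Such a limit is concave and bounded, hence $\equiv K_2$.
\item A limit taken at an interface tends to $K_2$ at $\pm\infty$, by the same argument on half-lines.
\item $K_2$ is the unique maximum point of $F_2(s)=\int_0^s f_2$ on $[0,M]$; this is where $\int_0^{K_2}f_2>0$ enters. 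The first integral $\frac{d_2}{2}v'^2+F_2(v)\equiv F_2(K_2)$ then forces $v\equiv K_2$.
\end{enumerate}
The paper's Step~1 asserts a bound of this kind ``from \eqref{wp}'', but \eqref{wp} alone only gives $p^*>\widetilde w_0$. The sliding argument is what actually supplies it.
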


\begin{proof}
Let $\hat{l}_2>2R$, where $R$ is given by Lemma~\ref{lem-Phi2}. Note that $\hat{l}_2$ here may be larger than the one in Proposition~\ref{pro_pl1l2}. To prove \eqref{strong stable}, it suffices to show that the periodic function $p^*-\eta\phi$ is a strict subsolution of~\eqref{eq}-\eqref{patch} when $l_1>0$ is small enough and $l_2>\hat{l}_2$ is large enough. To this end, we first prove that $\bar{\lambda}_1(l_1,l_2,p^*)>0$. 

By Proposition~\ref{pro_pl1l2}, $\bar{\lambda}_1(l_1,l_2,p^*)\geq0$. We argue by contradiction and assume that there are sequences $l_1^m$ and $l_2^m$ in $(0,+\infty)$,  such that $l_1^m\to0$, $l_2^m\to+\infty$ as $m\to+\infty$, and $\bar{\lambda}_1(l_1^m,l_2^m,p_{m}^*)=0$ for each $m\in\NN$, where $p_m^*$ satisfies
	\begin{equation}\label{sta4}
		\begin{cases}
			-d(x)(p_m^*)''(x)=f(x, p_m^*(x)), & x\in\R\backslash S_m,\\
			p_m^*(x^-)=p_m^*(x^+),\quad (p_m^*)'(x^-)=\sigma (p_m^*)'(x^+), & x=nl^m,\\
			p_m^*(x^-)=p_m^*(x^+),\quad \sigma (p_m^*)'(x^-)=(p_m^*)'(x^+), & x=nl^m+l_2^m,
		\end{cases}
	\end{equation}
	and $0<p_m^*\leq M=\max(K_1,K_2)$ from~\eqref{wp},  where
	\begin{equation}\label{Sm}
		S_m:=\{nl^m: n\in\mathbb Z\}\cup\{nl^m+l_2^m: n\in\mathbb Z\},\quad l^m=l_1^m+l_2^m.
	\end{equation}
	In addition, $\phi_m:\mathbb{R}\to\mathbb{R}$ satisfies $\phi_m|_{\overline{I}_m}\in C^\infty(\overline{I}_m)$ for each patch $I_m\subset\mathbb{R}\setminus S_m$, and
	\begin{equation}\label{eigenv4}
		\begin{cases}
			-d(x)\phi_m^{\prime\prime}(x) -f_s(x, p_m^*)\phi_m=\bar{\lambda}_1(l_1^m,l_2^m,p_m^*)\phi_m,  & x \in \mathbb{R} \backslash S_m, \\
			\phi_m(x^{-}) =\phi_m(x^{+}),\quad \phi_m^{\prime}(x^{-})=\sigma \phi_m^{\prime}(x^{+}), &  x=nl^m, \\
			\phi_m(x^{-}) =\phi_m(x^{+}),\,\, \sigma \phi_m^{\prime}(x^{-})=\phi_m^{\prime}(x^{+}), &  x=nl^m+l_2^m,\\
			\phi_m \text{ is periodic}, \phi_m>0, \|\phi_m\|_{L^{\infty}(\R)}=1,
		\end{cases}
	\end{equation}
where
\begin{equation}\label{Im}
	I_m=(nl^m-l_1^m,nl^m)\quad\text{or}\quad I_m=(nl^m,nl^m+l_2^m), \qquad n\in\mathbb Z.
\end{equation}

To derive a contradiction, we proceed in two steps.

\medspace
	
	\noindent{\it Step 1:
	We first claim that}
	\begin{equation}\label{claim_pK2}
	p_m^*\to K_2\text{ as } m\to+\infty\text{ uniformly in } \R .
	\end{equation}
	Indeed, suppose by contradiction that the convergence is not uniform. Then there exist $\varepsilon_0>0$, a subsequence (still labeled by $m$) $(x_m)_{m\in\NN}\subseteq\mathbb R$ such that
	$|p_m^*(x_m)-K_2|\ge \varepsilon_0$ for all $m\in\mathbb N.$ Up to extracting a further subsequence, we may assume that either 
	\begin{equation*}\label{pmb}
		p_m^*(x_m) \le K_2-\varepsilon_0 \quad \text{or} \quad K_2+\varepsilon_0\leq p_m^*(x_m) \le  K_1, \quad \forall m\in\NN.
	\end{equation*}
	Define 
	$$
	v_{m}(x):=p_m^*(x+x_m),\quad \forall x\in\R.
	$$
	Then $(v_m)_{m\in\NN}$ satisfies
	\begin{equation}\label{vm}
		\begin{cases}
			-d(x+x_m) v_m''(x)= f(x+x_m,v_m), & x\in\R\backslash (S_m-x_m),\\
			v_m(x^-)=v_m(x^+),\quad v_m'(x^-)=\sigma v_m'(x^+), & x=nl^m-x_m, n\in\Z\\
			v_m(x^-)=v_m(x^+),\quad \sigma v_m'(x^-)=v_m'(x^+), & x=nl^m+l_2^m-x_m, n\in\Z,
		\end{cases}
	\end{equation}
	and 
	$$
	v_m(0) \le K_2-\varepsilon_0 \quad \text{or} \quad K_2+\varepsilon_0 \leq v_m(0) \leq K_1, \quad \forall m\in\NN.
	$$
	We divide the argument into two cases according to the behavior of $\dist(x_m, S_m)$.
	
	\medspace
	
	\noindent
	{\it Case 1.1: $\dist(x_m,S_m)\to+\infty$ as $m\to+\infty$, up to extraction of a subsequence.}
	Fix $R_1>0$. By assumption, there exists $m_{R_1}\in\NN$ such that $\dist(x_m,S_m)>R_1$ for all $m\ge m_{R_1}.$
	This implies that $(x_m-R_1,x_m+R_1)\cap S_m=\varnothing,$ or equivalently, $(-R_1,R_1)\cap(S_m-x_m)=\varnothing.$
	Since $l_1^m\to0$ and $l_2^m\to+\infty$ as $m\to+\infty$, the interval $(-R_1,R_1)$ becomes entirely contained within a bistable patch for sufficiently large $m$. Thus, $v_m$ for all $m$ large satisfies
	$$
	-d_2 v_m''=f_2(v_m) \text{ in }(-R_1,R_1).
	$$
	By standard elliptic estimates, up to a subsequence, $v_m\to v_\infty$ as $m\to+\infty$ in $C^2_{\rm loc}(\R)$, where $v_{\infty}$ satisfies 
	\begin{equation*}
		-d_2v_\infty''(x)=f_2(v_\infty(x)),\quad x\in\mathbb R,
	\end{equation*}
	and $v_\infty(0)\leq K_2-\varepsilon_0$ or $K_2+\varepsilon_0\leq v_\infty(0)\leq K_1.$ We claim that $0\leq v_\infty(x)\leq K_2$ for all $x\in\R$. Indeed, $0\leq v_\infty(x)\leq M$ for all $x\in\R$. The function $w(t,x):=v_\infty(x)$ is a stationary solution of
	\begin{equation}\label{w-infty}
		\partial_tw=d_2\partial_{xx}w+f_2(w),\quad(t,x)\in(0,\infty)\times\mathbb R.
	\end{equation}
	Let $\bar v_\infty$ be the solution of
	\begin{equation*}
	\begin{cases}
		\bar{v}_{\infty}^{\prime}(t)=f_2(\bar{v}_{\infty}(t)),\\
		\bar{v}_{\infty}(0)=M.
	\end{cases}
\end{equation*}
	Since $M\geq K_2$ and $f_2$ satisfies~\eqref{f2}, we have 
	\begin{equation}\label{vinf}
	 \bar v_\infty(t)\to K_2 \text{ as } t\to+\infty.
	\end{equation}
	Moreover, $(t,x)\mapsto\bar v_\infty(t)$ is a spatially homogeneous solution of~\eqref{w-infty}. Since $0\leq w(0,x)=v_\infty(x)\leq M=\bar v_\infty(0)$ for $x\in\R,$
	the comparison principle yields
	$$
	0\leq w(t,x)=v_\infty(x)\leq\bar v_\infty(t),
	\qquad (t,x)\in[0,\infty)\times\mathbb R.
	$$
	Letting $t\to+\infty$ and using~\eqref{vinf}, we obtain $0\leq v_\infty(x)\leq K_2$ for all $x\in\R.$ Thus, the assumption $K_2+\varepsilon_0\leq v_\infty(0)\leq K_1$ is impossible. Hence,
$
v_\infty(0)\leq K_2-\varepsilon_0.
$
In conclusion, $v_{\infty}$ satisfies 
\begin{equation}\label{vinfty}
	\begin{cases}
		-d_2 v_\infty''(x)=f_2(v_\infty(x))\quad \text{ for }x\in\R, \\
		0\leq v_{\infty}\leq K_2.
	\end{cases}
\end{equation}

Furthermore, it follows from~\eqref{wp} that there exists $\varepsilon>0$ such that $K_2-\varepsilon \le p_m^*\leq M$ holds in $\R$ for all $m\in\NN$. Passing to the limit as $m \to +\infty$, we obtain $K_2-\varepsilon\leq v_{\infty} \le K_2$ in $\R$. This together with~\eqref{vinfty} and $f_2(K_2)=0$ yields $v_{\infty} \equiv K_2$ in $\R$. 
	This contradicts the assumption $v_\infty(0) \le K_2 - \varepsilon_0$ for $\varepsilon_0>0$. Hence, Case~1.1 is ruled out.

\medspace
	
\noindent	{\it Case~1.2: $\limsup_{m\to+\infty}\dist(x_m,S_m)<+\infty$.} Under this assumption, there exist $C>0$ and a sequence
	$y_m\in S_m$ such that $|x_m-y_m|\le C.$ Define $\eta_m:=x_m-y_m.$ Without loss of generality, we can choose $y_m\in \{nl^m: n\in\mathbb Z\}$.
	Up to extraction of a subsequence, $\eta_m\to\eta_\infty\in\R$ as $m\to+\infty$. By elliptic estimates, up to extraction of a subsequence, we have that $v_m\to v_\infty$ in $C^2_{\rm loc}(\R\setminus\{-\eta_\infty\})$ as $m\to+\infty$, where $v_\infty$ satisfies
	\begin{equation}\label{vm2}
		\begin{cases}
			-d_2 v_{\infty}''(x)= f_2(v_{\infty}(x)), & x\in(-\infty,-\eta_{\infty})\cup(-\eta_{\infty},+\infty),\\
			0\leq v_{\infty}\leq K_2.
		\end{cases}
	\end{equation}
	Furthermore, from the interface conditions in~\eqref{sta4}, we have that
	$$
	(p_m^*)'(y_m^-)=\sigma (p_m^*)'(y_m^+),\quad \sigma (p_m^*)'((y_m-l_1^m)^-)=(p_m^*)'((y_m-l_1^m)^+).
	$$
	Since $l_1^m\to0$ as $m\to+\infty$ and $\| (p_m^*)'' \|_{L^{\infty}(nl^m-l_1^m,nl^m)}\le\frac{f_1'(0)K_1}{d_1}$, we have that
	$$
	\left|(p_m^*)'((y_m-l_1^m)^-)-(p_m^*)'(y_m^+)\right|\le \frac{f_1'(0)K_1}{d_1} l_1^m.
	$$
	This implies that
	$$
	\left|(p_m^*)'((y_m-l_1^m)^-)-(p_m^*)'(y_m^+)\right|\to0\text{ as }m\to+\infty.
	$$
	Recalling $v_m(x)=p_m^*(x+x_m)$, and passing to the limit in the interface conditions of~\eqref{vm} therefore yields
	$$
	v_\infty'((-\eta_\infty)^-)=v_\infty'((-\eta_\infty)^+).
	$$
	Thus, $v_\infty\in C^1(\mathbb R)$. Combining this with \eqref{vm2}, we have that 
	$v_\infty\in C^2(\mathbb R)$ and satisfies
	\begin{equation*}
		\begin{cases}
			-d_2 v_{\infty}''(x)= f_2(v_{\infty}(x)), & x\in\R,\\
			 0\leq v_{\infty}\leq K_2.
		\end{cases}
	\end{equation*}
	Similar to the proof of Case~1.1, we have that $v_{\infty}\equiv K_2$ in $\R$. This contradicts the assumption that either $v_\infty(0) \le K_2 - \varepsilon_0$ or $v_\infty(0) \ge K_2 + \varepsilon_0$. Hence, Case~1.2 is ruled out. This proves~\eqref{claim_pK2}.
	
	\medspace
	
	 \noindent{\it Step 2: We show that}
	 $$
	 \bar{\lambda}_1(l_1,l_2,p^*)>0.
	 $$
	 Since $\phi_m$ is periodic and $\|\phi_m\|_{L^\infty(\mathbb R)}=1$, we may choose $z_m\in(-l_1^m,l_2^m)$ such that $\phi_m(z_m)=1.$ For any $m\in\mathbb{N}$, we distinguish two cases according to the location of $z_m$.
	
	\medspace
	
\noindent{\it Case~2.1: $\limsup_{m\to+\infty} \dist(z_m, S_m)>0$.} Using the assumption
	$\bar{\lambda}_1(l_1^m,l_2^m,p_m^*)=0$ for each $m\in\mathbb N$,~\eqref{claim_pK2}, and a similar argument to that in Step~1, there exists a nonnegative continuous function $\phi_{\infty}$ such that, up to extraction of a subsequence, $\phi_m(\cdot+z_m)\to\phi_{\infty}$ in $C^2_{\rm loc}(\R)$ as $m\to+\infty$. Furthermore, $\phi_{\infty}$ solves
	\begin{equation}\label{phiinfty1}
		\begin{cases}
			-d_2\phi_{\infty}^{\prime\prime} -f_2'( K_2)\phi_{\infty}=0 & \text{in }\R, \\
			\phi_{\infty}(0)=1, \quad 0<\phi_{\infty}\leq1 & \text{in }\R.
		\end{cases}
	\end{equation}
	Any solution to \eqref{phiinfty1} is of the form
$$
\phi_\infty(x)=C_1e^{\mu x}+(1-C_1)e^{-\mu x},\qquad\text{ where }\mu=\sqrt{-\frac{f_2'(K_2)}{d_2}}>0,
$$
where $C_1\in\R$. Since $\mu>0$, every solution is unbounded either as $x\to+\infty$ or as $x\to-\infty$.
This contradicts the fact that $0<\phi_\infty\le1$ in $\mathbb R.$ Moreover, $\phi_\infty\equiv1$ cannot be a solution to~\eqref{phiinfty1}, since $f_2'(K_2)<0$. Therefore Case~2.1 cannot occur.
	
		\medspace
		
\noindent	{\it Case~2.2: $\lim_{m\to+\infty} \dist(z_m, S_m)=0$.} Using~\eqref{claim_pK2}, $\bar{\lambda}_1(l_1^m,l_2^m,p_m^*)=0$ for each $m\in\mathbb{N}$, and a similar argument to that in Case~1.2 of Step~1, there exists a nonnegative continuous function $\phi_{\infty}$ such that, up to extraction of a subsequence, $\phi_m(\cdot+z_m)\to\phi_{\infty}$ in $C^2_{\rm loc}(\R)$ as $m\to+\infty$. Furthermore, $\phi_{\infty}$ solves \eqref{phiinfty1}. The same argument as in Case~2.1 shows that Case~2.2 cannot occur. Hence $\bar{\lambda}_{1}(l_1,l_2,p^*)>0$. 
		
	We finally show that there exists $\eta_0>0$ such that $p^*-\eta\phi$
	is a strict subsolution of~\eqref{eq}-\eqref{patch} for every $\eta\in(0,\eta_0]$. Since $\|\phi\|_{L^{\infty}(\R)} = 1$ and $p^*>0$ in $\R$, we may choose $\eta_0>0$ sufficiently small such that $p^*-\eta\phi>0$ in $\R$ and
	$$
	f(x,p^*)
	-f(x,p^*-\eta\phi)
	\le
	\eta\,\partial_s f(x,p^*)\phi
	+\frac{\bar\lambda_1(l_1,l_2,p^*)}{2}\eta\phi
	$$
	for all $0<\eta\le\eta_0$ and $x\in\mathbb R\setminus S$. It then follows that for $x\in\R\backslash S$
	\begin{equation*}\label{subsol2}
		\begin{split}
		 -d(x)(p^*-\eta\phi)''-f(x,p^*-\eta\phi)&=d(x)\eta\phi''+f(x,p^*)-f(x,p^*-\eta\phi)\vspace{3pt}\\
			&\leq -\bar{\lambda}_{1}(l_1,l_2,p^*)\eta\phi+\frac{\bar{\lambda}_{1}(l_1,l_2,p^*)}{2}\eta\phi<0
		\end{split}
	\end{equation*}
	for all $0<\eta\leq\eta_0$, as well as the interface conditions in \eqref{sta1}. This implies that $p^*-\eta\phi$ is a strict subsolution of equation~\eqref{eq}-\eqref{patch}. As a consequence, the solution $u(t,x;p^*(x)-\eta\phi(x))$ of \eqref{eq}-\eqref{patch} with initial condition $p^*(x)-\eta\phi(x)$ is increasing in $t$. Thus we have that \eqref{strong stable} holds. We complete the proof of Lemma~\ref{lem_strong}.
\end{proof}

\medspace

Recall that if $0<l_1<L_1^c$ and $l_2>l_2^c$, then Proposition~\ref{lem-STA}(ii) shows that $0$ is a stable solution to problem~\eqref{sta1}. On the other hand, Proposition~\ref{pro_pl1l2} establishes the existence of a linearly stable bounded positive periodic solution $p^*$ of ~\eqref{sta1} when $\int_0^{K_2} f_2(s)\mathrm{d}s>0$ and $l_2$ is large enough. It then follows from the Dancer-Hess connecting orbit theorem (see, e.g., \cite[Proposition 9.1]{H-1991}) that  there exists at least one periodic steady state $\bar{p}$  satisfying $0 < \bar{p} < p^*$  in $\R$. Furthermore, under the assumption $\int_0^{K_2} f_2(s)\mathrm{d}s>0$, we show in the following lemma that all such intermediate periodic steady states are unstable whenever $l_1$  is sufficiently small and $l_2$ is sufficiently large.

\begin{lemma}\label{unstable}
	Assume that $\int_0^{K_2} f_2(s) \mathrm{d} s>0$, there exist $\hat{l}_1>0$ and $\hat{l}_2>0$ such that $\bar{\lambda}_1(l_1,l_2,\bar{p})<0$ for every $0<l_1<\hat{l}_1$ and $l_2>\hat{l}_2$, and for every periodic steady state $\bar{p}$ of \eqref{eq}-\eqref{patch} with $0<\bar{p}<p^*$.
\end{lemma}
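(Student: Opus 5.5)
We argue by contradiction and compactness, as in the proof of Lemma~\ref{lem_strong}. Suppose there are sequences $l_1^m\to0$, $l_2^m\to+\infty$ and periodic steady states $\bar p_m$ of~\eqref{eq}-\eqref{patch} (with interface set $S_m$ as in~\eqref{Sm}) satisfying $0<\bar p_m<p_m^*$ and $\bar\lambda_1(l_1^m,l_2^m,\bar p_m)\ge0$. Let $\phi_m$ be the associated periodic principal eigenfunctions, normalized by $\|\phi_m\|_{L^\infty}=1$. Since $\bar p_m\le M$, the bound~\eqref{prinbound1} shows that $\bar\lambda_1^m:=\bar\lambda_1(l_1^m,l_2^m,\bar p_m)$ is bounded, so we may assume $\bar\lambda_1^m\to\lambda_\infty\ge0$.

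\emph{Step 1: location of $\bar p_m$ in the long bistable patches.} Fix the $\phi_2$ and $R$ from Lemma~\ref{lem-Phi2}, and the periodic subsolution $\widetilde w_0$ from~\eqref{tildew0}, which satisfies $\widetilde w_0<p_m^*$. We expect $\bar p_m$ to sit below the threshold: $\max\bar p_m\le\theta$ in the bistable patches. To prove this, suppose $\bar p_m(x)>\theta+\delta$ at some point deep in patch~2. Passing to the limit (Case~1.1 style) gives a whole-line bounded solution of $-d_2v''=f_2(v)$ with $\max v>\theta$. In the phase plane, bounded solutions are constants, periodic orbits around $\theta$, or homoclinics to $0$. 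The homoclinic to $0$ has maximum exceeding $\theta$ precisely because $\int_0^{K_2}f_2>0$, and the periodic orbits lie between $0$ and that maximum.

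The cleaner route is a sliding argument. If $\bar p_m\ge\phi_2(\cdot-x_0)$ anywhere after a translation by $x_0$, then a sliding/comparison argument starting from the stationary subsolution $\phi_2$ (as in Lemma~\ref{lem-nonex1}) forces $\bar p_m\ge$ the limit of $\widetilde w$, that is $\bar p_m\ge p_m^*$, which contradicts $\bar p_m<p_m^*$. To run this I would translate $\phi_2(\cdot-y)$ from a region where it lies below $\bar p_m$ and locate a first touching point; this touching is forbidden by the strong maximum principle. The delicate part is the starting position, since $\bar p_m>0$ but $\phi_2$ has compact support. I would start with the translate placed below $\bar p_m$ by using $\min\bar p_m>0$ together with a scaled family $\phi_{2}$ of smaller maximum, obtained from Lemma~\ref{lem-Phi2} with varying $R$.

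\emph{Step 2: the limit problem.} Take $z_m$ with $\phi_m(z_m)=1$ and pass to the limit as in Step~2 of Lemma~\ref{lem_strong}. In the thin KPP patch, the interfaces collapse and the $C^1$-matching argument of Case~1.2 applies, so we obtain $v_\infty$ solving $-d_2v_\infty''=f_2(v_\infty)$ on $\R$ with $0\le v_\infty\le\theta$, together with $0<\phi_\infty\le1$, $\phi_\infty(0)=1$, and $-d_2\phi_\infty''-f_2'(v_\infty)\phi_\infty=\lambda_\infty\phi_\infty$. By the phase plane, $v_\infty$ is $\equiv0$, $\equiv\theta$, or a homoclinic to $0$ with maximum above $\theta$; the last is excluded by Step~1.
\begin{itemize}
\item If $v_\infty\equiv\theta$, then $f_2'(\theta)>0$ and $\lambda_\infty\ge0$ give $\phi_\infty''=-(f_2'(\theta)+\lambda_\infty)\phi_\infty/d_2$. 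This forces oscillation, contradicting positivity.
\item If $v_\infty\equiv0$, this is consistent with $\lambda_\infty\ge0$ as far as the limit equation goes, so we must instead rule out $\bar p_m\to0$ near $z_m$.
\end{itemize}

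\emph{Step 3 and main obstacle.} The hardest step is excluding degeneration toward $0$. Here I would use that the principal eigenvalue of $\bar p_m$ is a min--max over the whole period, and test it with a function concentrated where $\bar p_m$ is near $\theta$. Since $\bar p_m\not\equiv0$ and is not constant, Step~1 and a uniform positivity argument show that on each long bistable patch $\bar p_m$ must approach $\theta$ somewhere, or else follow the homoclinic profile. The Rayleigh quotient of a test function supported where $f_2'(\bar p_m)\ge f_2'(\theta)/2$, over an interval of length $L$ fixed by $f_2'(\theta)$, is then negative, which gives $\bar\lambda_1^m<0$ for $m$ large. This contradicts the assumption. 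The key sub-claim, and the part I am least sure of, is that $\bar p_m$ cannot be uniformly small on the bistable patches. If it were, it would be a positive periodic solution near $0$, and since $\lambda_1>0$ in this regime (Proposition~\ref{lem-STA}(ii)) a small positive steady state is impossible: it would be a strict supersolution type perturbation of $0$, contradicting the result underlying Theorem~\ref{thmEXT1}.
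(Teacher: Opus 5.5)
\textbf{There is a genuine gap: your Step~1 claim is false, and Steps~2--3 rest on it.} Step~1 asserts that $\max\bar p_m\le\theta$ on the long bistable patches.

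Already in the formal limit $l_1=0$ (the equation $d_2v''+f_2(v)=0$ with a large period), the intermediate periodic states are $\theta$ and the nonconstant periodic orbits around $(\theta,0)$. The latter include pulses close to the ground state, and every nonconstant one has maximum $>\theta$. Symmetric perturbations of these persist for small $l_1>0$.

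Your sliding argument cannot detect this. It only shows that $\bar p_m$ does not lie above a translate of $\phi_2$. A positive Dirichlet solution of $d_2\phi''+f_2(\phi)=0$ has conserved energy $\frac{d_2}{2}\phi'^2+\int_0^{\phi}f_2(s)\,\mathrm{d}s$, equal to $\frac{d_2}{2}\phi'(\pm R)^2>0$. Hence its maximum exceeds the height $\bar s$ of the homoclinic, where $\int_0^{\bar s}f_2(s)\,\mathrm{d}s=0$. So there is no family of such solutions with smaller maxima from which to start the sliding, and ``$\bar p_m\not\ge\phi_2(\cdot-y)$'' says nothing about exceeding $\theta$.

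As a result, the two genuinely hard limit profiles are never treated:
\begin{itemize}
\item a nonconstant periodic orbit around $\theta$, which your trichotomy also omits;
\item the homoclinic to $0$.
\end{itemize}
Your Step~3 test function does not handle them either. These profiles cross $\theta$ with $v'\neq0$, so the set where $f_2'(\bar p_m)\ge f_2'(\theta)/2$ has bounded length, and that length need not exceed $\pi\sqrt{2d_2/f_2'(\theta)}$.

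\textbf{The missing idea is the one the paper uses.}
\begin{enumerate}
\item Show that each $\bar p_m$ takes the value $\theta$ at some $x_m$. This means excluding both $0<\bar p_m<\theta$ and $\theta<\bar p_m<p_m^*$; you never address the second alternative.
\item Recenter $\bar p_m$ and the eigenfunction at $x_m$, normalizing $\phi_m(x_m)=1$. Normalizing at the maximum of $\phi_m$, as you do, is what produces your spurious $v_\infty\equiv0$ case.
\item Rule out each possible limit $w_\infty$ with $w_\infty(0)=\theta$:
\begin{itemize}
\item if $w_\infty\equiv\theta$, then $\phi_\infty$ would be positive and strictly concave on $\R$;
\item if $w_\infty$ is nonconstant periodic, Sturm comparison with $w_\infty'$ forces $\phi_\infty$ to vanish;
\item if $w_\infty$ is the homoclinic with maximum point $\bar x$, then $\phi_\infty'w_\infty'-\phi_\infty w_\infty''$ is nonincreasing on $(-\infty,\bar x]$, positive at $\bar x$, and tends to $0$ at $-\infty$, a contradiction.
\end{itemize}
\end{enumerate}
Your variational idea could replace the third step. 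Use a truncation of $w_\infty'$ on a slightly enlarged nodal domain as a Dirichlet test function, recalling that $\bar\lambda_1$ lies below every Dirichlet eigenvalue on a bounded subinterval. It still needs the crossing point, though.

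\textbf{Your appeal to Theorem~\ref{thmEXT1} is not uniform in $m$.} You use it to exclude uniformly small $\bar p_m$, but its threshold depends on $(l_1,l_2)$, as the remark following Lemma~\ref{lem-ex21} points out. This step therefore needs a separate argument. One option: test the equation of $\bar p_m$ against the periodic eigenfunction at $0$, with the weights $1/(\sigma d_1)$ and $1/d_2$ that make the interface problem symmetric. This gives $\lambda_1(l_1^m,l_2^m)\le o(1)$ as $\|\bar p_m\|_{L^\infty(\R)}\to0$. Combine it with a uniform positive lower bound on $\lambda_1(l_1^m,l_2^m)$, which can be checked from the dispersion relation.
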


\begin{proof}
	Assume by contradiction that there exist sequences $(l_1^m)_{m\in\mathbb N}$ and $(l_2^m)_{m\in\NN}$ in $(0,+\infty)$ such that $l_1^m\to0$, $l_2^m\to+\infty$ as $m\to+\infty$, together with sequences $(\bar p_m)_{m\in\mathbb N}$ and $(\phi_m)_{m\in\mathbb N}$ satisfying the following properties. For each $m\in\mathbb N$, the function $\bar p_m$ solves
	\begin{equation}\label{sta3}
		\begin{cases}
			-d(x)\bar{p}_m''(x)=f(x, \bar{p}_m(x)), & x\in\R\backslash S_m,\\
			\bar{p}_m(x^-)=\bar{p}_m(x^+),\quad \bar{p}_m'(x^-)=\sigma \bar{p}_m'(x^+), & x=nl^m, n\in\Z,\\
			\bar{p}_m(x^-)=\bar{p}_m(x^+),\quad \sigma \bar{p}_m'(x^-)=\bar{p}_m'(x^+), & x=nl^m+l_2^m, n\in\Z,
		\end{cases}
	\end{equation}
	with $0<\bar{p}_m<p_m^*$ in $\R$, where $S_m$ is defined by \eqref{Sm} and $p_{m}^*$ is given by~\eqref{sta4}. Moreover, $\phi_m:\mathbb R\to\mathbb R$ satisfies
	$\phi_m|_{\overline I_m}\in C^\infty(\overline I_m)$ for every patch $I_m$ as in~\eqref{Im}, and solves the eigenvalue problem~\eqref{eigenv4} with $p^*_m$ replaced by $\bar p_m$, corresponding to the principal eigenvalue $\bar{\lambda}_1(l_1^m,l_2^m,\bar{p}_m)\geq0$. Furthermore, by using \eqref{prinbound1}, we actually have that
	\begin{equation*}\label{prinbound3}
		-\max_{x\in\mathbb R,\ s\in[0,p_m^*]}
		f_s(x,s)
		\le
		\bar{\lambda}_1(l_1^m,l_2^m,\bar p_m)
		\le
		-\min_{x\in\mathbb R,\ s\in[0,p_m^*]}
		f_s(x,s).
	\end{equation*}
	The sequence $\big(\bar{\lambda}_1(l_1^m,l_2^m,\bar{p}_m) \big)_{m\in\NN}$ is then bounded. Up to extraction of some subsequence, there is a real number $\tilde{\lambda}_1\geq 0$ such that $\bar{\lambda}_1(l_1^m,l_2^m,\bar{p}_m) \to\tilde{\lambda}_1$ as $m\to+\infty$.
	
	We first claim that
	\begin{equation}\label{claim6.1}
			\text{for every }m\in\mathbb N,\ \text{there exists }
		x_m\in(-l_1^m,l_2^m)
		\text{ such that }
		\bar p_m(x_m)=\theta.
	\end{equation}
	We postpone the proof of \eqref{claim6.1} until the end of this argument. We continue with the proof of Lemma~\ref{unstable}.

Define 
	$$
	w_m(x):=\bar{p}_m(x+x_m),\quad x\in\R.
	$$
	Each function $w_m:\R\to\R$ is continuous and $w_m|_{\bar{I}_m}\in C^2(\bar{I}_m)$ for each patch $I_m$ as in~\eqref{Im} (shifted by $-x_m$) is a solution to
	\begin{equation*}\label{wm}
		\begin{cases}
			-d(x+x_m) w_m''(x)= f(x+x_m,w_m), & x\in\R\backslash (S_m-x_m),\\
			w_m(x^-)=w_m(x^+),\quad w_m'(x^-)=\sigma w_m'(x^+), & x=nl^m-x_m,\\
			w_m(x^-)=w_m(x^+),\quad \sigma w_m'(x^-)=w_m'(x^+), & x=nl^m+l_2^m-x_m,\\
			w_m(0)=\theta.
		\end{cases}
	\end{equation*}
	Similar to the proof of~Lemma~\ref{lem_strong}, we now distinguish two cases according to the behavior of $\dist(x_m,S_m).$
	
	\medspace
	
\noindent{\it Case~1: $\dist(x_m,S_m)\to+\infty$ as $m\to+\infty$, up to extraction of a subsequence.} By the same arguments as in Case~1.1 of Lemma~\ref{lem_strong}, after passing to a subsequence if necessary, we obtain $w_m\to w_\infty$ in $C^2_{\rm loc}(\R)$, where $w_{\infty}$ satisfies 
	\begin{equation}\label{w1}
		\begin{cases}
			-d_2 w_\infty''(x)=f_2(w_\infty(x))\quad &x\in\R, \\
			w_\infty(0)=\theta, \; 0<w_\infty< K_2.
		\end{cases}
	\end{equation}
	The bounds $0<w_\infty<K_2$ follow from the strong maximum principle. Similarly, by normalizing $\phi_m$ in such a way that $\phi_m(x_m)=1$, there exists a nonnegative continuous function $\phi_{\infty}$ such that, up to extraction of some subsequence, $\phi_m(\cdot+x_m)\to\phi_{\infty}$ in $C_{\rm loc}^2(\R)$ as $m\to+\infty$, and $\phi_{\infty}$ solves
	\begin{equation}\label{phiinfty}
		\begin{cases}
			-d_2\phi_{\infty}^{\prime\prime} -f_2'( w_{\infty})\phi_{\infty}=\tilde{\lambda}_1\phi_{\infty}  & \text{ in }\R, \\
			\phi_{\infty}(0)=1,\, 0<\phi_{\infty}\leq1 &\text{ in }\R
		\end{cases}
	\end{equation}
	(notice here that the function $\phi_{\infty}$ may not be periodic, since the convergence is only local as $m\to+\infty$.)
	
	By~\eqref{f2} and $\int_{0}^{K_2}f_2(s)\ds>0$, according to phase diagrams of equation~\eqref{w1}, the solution $w_{\infty}$ can only be of one of the following three types: either a constant function, or a non-constant periodic function, or a ground state solution such that $w_{\infty}(\pm\infty)=0$. In what follows, one will get a contradiction in each of these three cases.
	
	\medspace
	
	\noindent{\it Case~1.1: $w_{\infty}$ is a constant solution, that is $w_{\infty}\equiv \theta$ in $\R$}. In this case, $\phi_{\infty}$ obeys the linear equation 
	$$
	\phi_{\infty}''=-\dfrac{f_2'(\theta)+\tilde{\lambda}_1}{d_2}\phi_{\infty}\quad \text{ in }\R.
	$$
	Since $f_2'(\theta)>0$ and~$\tilde{\lambda}_1\geq 0$, it follows that the positive function $\phi_{\infty}$ is strictly concave in $\R$, which is impossible. Hence, Case~1.1 is ruled out.
	
	\medspace
	
\noindent	{\it Case~1.2: $w_{\infty}$ is a non-constant periodic solution}. In this case, $w_{\infty}'$ is a non-signed periodic function satisfying
	\begin{equation}\label{eqpinfty'}
		d_2(w_{\infty}')''+ f_2'(w_{\infty})w_{\infty}'=0\ \hbox{ in }\R,
	\end{equation}
	whereas $\phi_{\infty}$ solves $d_2\phi_{\infty}^{\prime\prime}(x) +(f_2'( w_{\infty})+\tilde{\lambda}_1)\phi_{\infty}(x)=0$ in $\R$. Since $\tilde{\lambda}_1\geq 0$, it follows from Sturm comparison theorem that $\phi_{\infty}$ must vanish somewhere, which is impossible since $\phi_{\infty}>0$ in $\R$. Hence, Case~1.2 is ruled out too.
	
	\medspace
	
\noindent	{\it Case~1.3: $w_{\infty}$ is a non-periodic solution and $\lim_{x\to\pm\infty}w_{\infty}(x)=0$}. Denote $F(s)=\int_0^sf_2(u)du$ for all $s\in[0,K_2]$. From the assumptions~\eqref{f2} and $\int_0^{K_2}f_2(u)\,du>0$, there is a real number $\bar{s}\in (\theta,K_2)$ such that $F(0)=F(\bar{s})=0$, $F(s)<0$ for all $0<s<\bar{s}$ and $F(s)>0$ for all $\bar{s}<s\le K_2$.  It then follows that there is $\bar{x}\in\R$ such that $w_{\infty}(\bar{x})=\bar{s}$, $w_{\infty}'(\bar{x})=0$, $w_{\infty}'>0$ in~$(-\infty,\bar{x})$ and~$w_{\infty}'<0$ in $[\bar{x},+\infty)$. Notice also by~\eqref{f2} that
	$$
	w_{\infty}''(\bar{x})=-\frac{f_2(w_{\infty}(\bar{x}))}{d_2}=-\frac{f_2(\bar{s})}{d_2}<0,
	$$
	and that there is $\underline{x}<\bar{x}$ such that $w_{\infty}''(x)=-f_2(w_{\infty}(x))/d_2>0$ for all $x\le\underline{x}$. Furthermore, $\lim_{x\to-\infty}w_{\infty}''(x)=\lim_{x\to-\infty}w_{\infty}'(x)=0$. Denote
	$$
	\bar{q}(x)=\phi_{\infty}'(x)w_{\infty}'(x)-\phi_{\infty}(x)w_{\infty}''(x)\ \hbox{ for }x\in\R.
	$$
	It follows from~\eqref{phiinfty} and~\eqref{eqpinfty'} that $\bar{q}'(x)=-\tilde{\lambda}_1\phi_{\infty}(x)w_{\infty}'(x)/d_2\le0$ for all $x\le\bar{x}$, whence
	\begin{equation}\label{qx}
		\bar{q}(x)\ge \bar{q}(\bar{x})=-\phi_{\infty}(\bar{x})w_{\infty}''(\bar{x})>0\ \hbox{ for all }x\le\bar{x}.
	\end{equation}
	Therefore, $\phi_{\infty}'(x)w_{\infty}'(x)\ge\phi_{\infty}(x)w_{\infty}''(x)>0$ for all $x\le\underline{x}\,(<\bar{x})$. In particular, $\phi_{\infty}'(x)>0$ for all $x\le\underline{x}$ and, since $\phi_{\infty}$ is positive, the limit $\phi_{\infty}(-\infty)\in[0,+\infty)$ exists. By~\eqref{phiinfty}, the function $\phi_{\infty}''$ has a finite limit as $x\to-\infty$ and it follows then from elementary arguments that $\phi_{\infty}'(x)\to0$ as $x\to-\infty$. Lastly, since $w'_{\infty}(-\infty)=w_{\infty}''(-\infty)=0$, one gets that $\bar{q}(x)\to0$ as $x\to-\infty$, which contradicts~\eqref{qx}. As a consequence, Case~1.3 is ruled out too. 

	\medspace
	
	\noindent
	{\it Case 2: $\limsup_{m\to+\infty}\dist(x_m,S_m)<+\infty$.} By the same argument as in Case~1.2 of Lemma~\ref{lem_strong}, after passing to a subsequence if necessary, we obtain $w_m\to w_\infty$ in $C^2_{\rm loc}(\R)$ as $m\to+\infty$, where $w_{\infty}$ satisfies 
	\begin{equation*}
		\begin{cases}
			-d_2 w_{\infty}''(x)= f_2(w_{\infty}(x)), & x\in\R,\\
			w_{\infty}(0)=\theta,\quad 0<w_{\infty}<K_2.
		\end{cases}
	\end{equation*}
	Using the similar arguments as in Cases~2.1 and~2.2 of Lemma~\ref{lem_strong}, there exists a nonnegative continuous function $\phi_{\infty}$ such that, up to extraction of some subsequence, $\phi_m(\cdot+x_m)\to\phi_{\infty}$ as $m\to+\infty$ in $C_{\rm loc}^2(\R)$, and $\phi_{\infty}$ satisfies~\eqref{phiinfty}. Therefore, repeating the arguments in Case~1 above yields a contradiction. The conclusion of Lemma~\ref{unstable} is therefore achieved, assuming~\eqref{claim6.1}.
	
	\medspace
	
	We now turn to the proof of~\eqref{claim6.1}. Suppose, by contradiction, that $\bar p_m(x)\neq\theta$ for all $x\in\mathbb R$. By continuity and $l^m$-periodicity of $\bar{p}_m$, one would have either
$$0<\bar{p}_m(x)<\theta\ \hbox{ for all $x\in\R$},$$
or $\theta<\bar{p}_m(x)<p_m^*(x)$ for all $x\in\R$. We only consider the first case, as the second one can be handled in a similar manner. For the first case, we proceed in two steps to derive a contradiction.
	
	\medspace
	
	\noindent {\it Step 1: For each $m\in\mathbb{N}$, if $\dis\max_{\R}\bar{p}_m=\bar{p}_m(x_0)$, then $x_0\in(nl^m-l_1^m,nl^m)$ for some $n\in\Z$.}
	Indeed, on the one hand, from~\eqref{f2}, \eqref{sta3}, and the assumption $0<\bar{p}_m<\theta$ in $\R$, we have that 
	\begin{equation}\label{pm1}
		(\bar{p}_m)^{\prime\prime}(x)=- \dfrac{f_2(\bar{p}_m(x))}{d_2}>0,\quad  x \in (nl^m,nl^m+l_2^m), n\in\Z.
	\end{equation}
	This implies that $\bar{p}_m$ is a convex function in each bistable patch. On the other hand, if $K_1 \ge \theta$ or $0< \bar{p}_m(x)\leq K_1 < \theta$ for $x\in\R$, it follows from~\eqref{f1} and~\eqref{sta3} that
	$$
	(\bar{p}_m)''(x)= - \frac{f_1(\bar{p}_m(x))}{d_1}<0,\quad x \in (nl^m-l_1^m,nl^m),\ n\in\Z.
	$$
	Hence, $\bar p_m$ is concave on each KPP patch when $K_1\geq\theta$ or $0< \bar{p}_m(x)\leq K_1 < \theta$ for $x\in\R$. Furthermore, we show that $\bar p_m$ cannot be strictly monotone on $(nl^m,nl^m+l_2^m)$, $n\in\Z$. Suppose, for instance, that $\bar p_m$ is increasing on $(nl^m,nl^m+l_2^m)$. Then $\bar p_m'((nl^m+l_2^m)^-)>0$, and by the interface condition \eqref{sta3}, $\bar p_m'((nl^m+l_2^m)^+)>0$. If $\bar p_m$ remains increasing on the subsequent KPP patch, this contradicts the periodicity of $\bar p_m$ and $0<\bar{p}_m<\theta$. On the other hand, if $\bar p_m'$ becomes non-positive somewhere in that KPP patch, then, since $\bar p_m$ is concave there, $\bar p_m'$ cannot become positive again, again contradicting the periodicity of $\bar p_m$.
	The case where $\bar p_m$ is decreasing on $(nl^m,nl^m+l_2^m)$ can be treated similarly. Therefore $\bar p_m$ is not strictly monotone on any bistable patch. Since $\bar p_m$ is convex on each bistable patch, it follows that $\bar p_m$ attains its minimum at each bistable patch.	
	
	Next, let $x_0$ be a point where $\bar p_m$ attains its global maximum. If $x_0\in S_m$, then $(\bar p_m)'(x_0)=0$ by the interface condition. Since $\bar{p}_m(x)$ is strictly concave for $x\in(nl^m-l_1^m,nl^m)$, the derivative is decreasing there. Therefore, we have that $(\bar{p}_m)^{\prime}((nl^m)^{-})<0$, and $(\bar{p}_m)^{\prime}((nl^m-l_1^m)^{+})>0$. Similarly, since $\bar{p}_m(x)$ is strictly convex for $x\in(nl^m,nl^m+l_2^m)$, the derivative is strictly increasing there. Hence, $(\bar{p}_m)^{\prime}((nl^m)^{+})<0$ and $(\bar{p}_m)^{\prime}((nl^m+l_2^m)^{-})>0$. Therefore  $\bar{p}_m$ is increasing on both sides of $nl^m$ and decreasing on both sides of $nl^m-l_1^m$, contradicting the fact that $x_0\in S_m$ is a maximum point. Therefore, $\bar p_m$ cannot attain its maximum on $S_m$. For each $m\in\NN$, if $x_0\in(nl^m,nl^m+l_2^m)$ for some $n\in\Z$, a similar argument leads to a contradiction. This ensures that $\max_{\mathbb{R}}\bar{p}_m$ is attained in an open KPP patch when $K_1\geq\theta$ or $0< \bar{p}_m(x)\leq K_1 < \theta$ for $x\in\R$.
		
	For $K_1<\theta$, if $K_1 < \bar{p}_m < \theta$ in $\mathbb{R}$, then $\bar{p}_m$ is strictly convex in $\mathbb{R}$, contradicting its periodicity and boundedness. Thus, there exists $\bar{x} \in \mathbb{R}$ such that $0 < \bar{p}_m(\bar{x}) \le K_1<\theta$. Furthermore, we claim that $0 < \bar{p}_m \le K_1<\theta$ in $\mathbb{R}$. Indeed, by contradiction, we assume that $K_1<\max_{\mathbb{R}} \bar{p}_m <\theta$. Since $x_0$ is the global maximum point of $\bar{p}_m$, if $x_0 \in (nl^m, nl^m + l_2^m)$ for every $n \in \mathbb{Z}$, it follows from~\eqref{pm1} that $(\bar{p}_m)''(x_0) > 0$, which is a contradiction. If $x_0 \in (nl^m-l_1^m, nl^m)$ for some $n \in \mathbb{Z}$, it follows from~\eqref{f1} and~\eqref{sta3} that 
	$$
	(\bar{p}_m)''(x_0)= - \frac{f_1(\bar{p}_m(x_0))}{d_1}>0,
	$$
	which is a contradiction. If $x_0 \in S_m$, we obtain a contradiction by using the similar arguments as above. Thus, we have $\max_{\mathbb{R}} \bar{p}_m \le K_1$, which implies $0 < \bar{p}_m(x) \le K_1$ for all $x \in \mathbb{R}$. Then by using the same argument as above, we have that any global maximum point $x_0$ cannot lie in the bistable patches or on the interfaces $S_m$.\\
	
	\noindent {\it Step 2: Contradiction.} For $m\in\NN$, without loss of generality, by using Step 1 and periodicity of $\bar{p}_m$, we may assume that there exists $x_0^m\in(-l_1^m,0)$ such that $\bar p_m'(x_0^m)=0.$ Furthermore, for $x\in(-l_1^m,0)$, it follows from~\eqref{sta3} and \eqref{f1} that 
	$$
	\| \bar p_m'' \|_{L^{\infty}(-l_1^m,0)}\le\frac{f_1'(0)K_1}{d_1}.
	$$ 
	Hence, we have that
	$$
	|\bar p_m'(0^-)-\bar p_m'(x_0^m)|\le\frac{f_1'(0)K_1}{d_1}l_1^m.
	$$
	Since $l_1^m \to 0$ as $m \to +\infty$, it follows immediately that $\bar{p}_m'(0^-) \to 0$ as $m\to+\infty$. By applying the interface conditions at $x=0$, we also have $\bar{p}_m'(0^+) \to 0$ as $m \to +\infty$. Then, up to a subsequence, we may assume that one of the following three alternatives holds:
	\begin{equation*}
		\lim_{m\to+\infty}	\bar p_m(0)=
		\begin{cases}
			0,\\
			\theta,\\
			\gamma\in(0,\theta).
		\end{cases}
	\end{equation*}
	We now show that each of these cases leads to a contradiction.
	\begin{itemize}
		\item If $\lim_{m\to+\infty}\bar{p}_m(0) = 0$, recalling that $\lim_{m\to+\infty}\bar{p}_m'(0^\pm) = 0$ and that $0$ is a stable solution of~\eqref{sta3}, the continuous dependence of solutions to the ODE in the KPP patch implies that $\bar p_m$ converges locally uniformly to $0$ in $\R$ as $m\to+\infty$. The convergence is in fact uniform on $\mathbb R$. Indeed, since $x_0^m\in(-l_1^m,0)$ is the global maximum point of $\bar p_m$. By Step~1, we have that 
		$\max_{\mathbb R}\bar p_m =\bar p_m(x_0^m)\to0$, as $m\to+\infty$. This together with $
		0<\bar p_m(x)\le \max_{\mathbb R}\bar p_m$ for all $x\in\mathbb R,$ yields $
		\|\bar p_m\|_{L^\infty(\mathbb R)}\to0 \text{ as }m\to+\infty.$
		This contradicts the assumption that $0<\bar p_m(x)<\theta$ for all $x\in\mathbb R$.
		\item If $\lim_{m\to+\infty}\bar p_m(0)=\theta$,  then by using interface conditions and integrating the first equation of~\eqref{sta3} from $-l_1^m$ to $0$, we have that
		\begin{equation*}\label{flux_limit}
			\bar{p}_m'(0^-) - \bar{p}_m'\big((-l_1^m)^+\big) = \sigma \int_{-l_1^m}^{0} -\frac{f_1( \bar{p}_m(s))}{d_1} \mathrm{d}s.
		\end{equation*}
		Using~\eqref{f1} and $l_1^m \to 0$ as $m\to+\infty$, we have that $\bar{p}_{\infty}'(0^+) = \bar{p}_{\infty}'(0^-)$, where $\bar{p}_{\infty} := \lim_{m\to+\infty}\bar{p}_m$. Since $l_2^m \to +\infty$ as $m\to+\infty$, $\bar{p}_{\infty}$ satisfies
		\begin{equation*}
			\begin{cases}
				-d_2\bar{p}_{\infty}''(x)=f_2(\bar{p}_{\infty}(x)), & x\in\R,\\
				\bar{p}_{\infty}(0) = \theta, \, \bar{p}_{\infty}'(0) = 0.
			\end{cases}
		\end{equation*}
		The Cauchy-Lipschitz theorem then implies $\bar{p}_{\infty} \equiv \theta$ on $\mathbb{R}$. This contradicts the instability of $\theta$ in the bistable patch.
		\item If $\lim_{m\to+\infty}\bar{p}_m(0)=\gamma\in(0,\theta)$, then by similar arguments as above, passing to the limit in~\eqref{sta3} as $m\to+\infty$ yields $\bar{p}_\infty$ satisfying
		\begin{equation}\label{ode_bistable}
			\begin{cases}
				-d_2\bar{p}_{\infty}''(x) = f_2(\bar{p}_{\infty}(x)), & x \in \mathbb{R},\\[1mm]
				\bar{p}_{\infty}(0) = \gamma, \quad \bar{p}_{\infty}'(0) = 0,
			\end{cases}
		\end{equation}
		together with $0 \le \bar{p}_{\infty}(x) \le \theta$ for $x \in \mathbb{R}$. The planar system associated with \eqref{ode_bistable}, namely $u'=v$ and $v'=-\frac{1}{d_2}f_2(u)$, is Hamiltonian with the first integral 
		$$
		H(u,v) := \frac{d_2}{2}v^2 + F_2(u), \quad \text{where } F_2(u) := \int_0^u f_2(s)\mathrm{d}s.
		$$
		Thus, the trajectory of $\bar{p}_{\infty}$ is confined to the energy level set
		 $H\big(\bar{p}_{\infty}(x), \bar{p}_{\infty}'(x)\big) \equiv F_2(\gamma)$ for all $x \in \mathbb{R}$. Since $f_2(\theta) = 0$ and $f_2'(\theta) > 0$, we have $F_2''(\theta) = f_2'(\theta) > 0$. This implies that $(\theta,0)$ is a strict local minimum of $H(u,v)$, designating it as a spatial center of the phase portrait. Given that $\gamma \in (0, \theta)$, the trajectory passing through $(\gamma, 0)$ is a non-trivial closed periodic orbit surrounding $(\theta, 0)$. Therefore, $\bar p_\infty$ is periodic in $x$ and oscillates around $\theta$. In particular, there exists some $x_0 \in \mathbb{R}$ such that $\bar{p}_{\infty}(x_0) > \theta$. This contradicts the fact that $0\leq \bar p_\infty(x)\leq \theta$ for $x\in\mathbb R.$
	\end{itemize}
	This completes the proof of \eqref{claim6.1}, and hence the proof of Lemma~\ref{unstable}.
\end{proof}

\medspace

A consequence of Lemma~\ref{unstable} is the following non-existence result. 

\begin{lemma}\label{noexstate}
	Assume that $\int_0^{K_2} f_2(s) \mathrm{d} s>0$. There exist $\hat{l}_1>0$ and $\hat{l}_2>0$ such that, for every $0<l_1<\hat{l}_1$ and $l_2>\hat{l}_2$, and for every periodic steady state $\bar{p}$ of~\eqref{eq}-\eqref{patch} such that $0<\bar{p}<p^*$, there is no steady state $v$ of~\eqref{eq}-\eqref{patch} such that $0<v<\bar{p}$ and there is no steady state~$w$ of~\eqref{eq}-\eqref{patch} such that $\bar{p}<w<p^*$.
\end{lemma}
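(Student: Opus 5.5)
We argue by contradiction and use Lemma~\ref{unstable} together with a monotone-iteration (sub/supersolution) argument. Fix $\hat l_1,\hat l_2$ such that Lemmas~\ref{lem_strong} and~\ref{unstable} hold, and fix $0<l_1<\hat l_1$, $l_2>\hat l_2$ and a periodic steady state $\bar p$ with $0<\bar p<p^*$. By Lemma~\ref{unstable}, $\bar\lambda_1(l_1,l_2,\bar p)<0$; let $\phi>0$ be the associated periodic eigenfunction of~\eqref{eigenv2}.

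\emph{Step 1.} We use the instability of $\bar p$ to build strict sub- and supersolutions near $\bar p$. Exactly as in the computation~\eqref{supersol1} in the proof of Proposition~\ref{pro_pl1l2}, for $\epsilon>0$ small the function $\bar p-\epsilon\phi$ is a strict periodic supersolution of~\eqref{eq}-\eqref{patch} (with the interface conditions satisfied). Symmetrically, $\bar p+\epsilon\phi$ is a strict periodic subsolution. Here we use $\bar\lambda_1<0$ and a first-order Taylor expansion of $f(x,\cdot)$ at $\bar p$, uniformly in $x\in\R\setminus S$; this is legitimate because $f_1,f_2\in C^1$ and $\bar p$ is bounded and periodic.

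\emph{Step 2 (no $w$ with $\bar p<w<p^*$).} Suppose such a steady state $w$ exists. By the strong maximum principle for the patch problem (Proposition~\ref{proCPBD}), $w-\bar p>0$ everywhere, although not necessarily uniformly if $w$ is not periodic. To handle this, we first replace $w$ by a periodic steady state. Let $W$ be the solution with initial datum $\bar p+\epsilon\phi$, with $\epsilon$ chosen so that $\bar p+\epsilon\phi<p^*$; this is possible since $p^*-\bar p$ is periodic and positive, hence bounded below by a positive constant. Then $W$ is increasing in $t$ and bounded above by $p^*$. As in the proof of Theorem~\ref{thmPERS}, $W$ converges to a periodic steady state $p_+$ with $\bar p<\bar p+\epsilon\phi<p_+\le p^*$. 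By strong stability from below (Lemma~\ref{lem_strong}) and the same monotone argument, we expect $p_+=p^*$. This alone does not contradict anything, so the contradiction must come from the intermediate states. The cleaner route is to take $\tilde w:=\inf_{k\in\Z}w(\cdot+kl)$, or more precisely the limit of the solution started from the supersolution $\min(w,\cdot)$, and thereby obtain a periodic steady state $\tilde p$ with $\bar p<\tilde p<p^*$ (using the generalized supersolution property of infima of translates together with $\inf(w-\bar p)$ over a period). Then $\tilde p$ is itself a periodic steady state strictly between $0$ and $p^*$, so Lemma~\ref{unstable} gives $\bar\lambda_1(\tilde p)<0$ as well. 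Now consider the minimal periodic steady state above $\bar p$, obtained as the limit of the solution started from $\bar p+\epsilon\phi$ kept below $\tilde p$. The sandwich $\bar p+\epsilon\phi\le \tilde p$ holds for $\epsilon$ small because $\tilde p-\bar p$ is periodic and positive. This limit is a periodic steady state $\hat p$ with $\bar p<\hat p\le\tilde p$. Comparing $\hat p$ with $\hat p-\delta\phi_{\hat p}$, which is a strict supersolution lying above $\bar p+\epsilon\phi$ for suitable $\delta$, shows that the increasing orbit stays below $\hat p-\delta\phi_{\hat p}$. This contradicts convergence to $\hat p$.

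\emph{Step 3 (no $v$ with $0<v<\bar p$).} This is symmetric. Starting from the supersolution $\bar p-\epsilon\phi$, the solution decreases to the maximal periodic steady state $\check p$ below $\bar p$ and above the (periodized) $v$, so $\check p>0$. Lemma~\ref{unstable} applies to $\check p$, giving $\bar\lambda_1(\check p)<0$, and the subsolution $\check p+\delta\phi_{\check p}$ yields the same contradiction.

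The main obstacle is the reduction from a possibly non-periodic steady state $v$ or $w$ to a periodic one that is strictly separated from $\bar p$. I would try first to use monotone iteration started from the periodic sub/supersolution $\bar p\pm\epsilon\phi$, trapped between $\bar p$ and $\inf_k w(\cdot+kl)$ (respectively $\sup_k v(\cdot+kl)$). Positivity of these envelopes is obtained via Proposition~\ref{proSTA}(ii)-type arguments and the Harnack inequality across interfaces.
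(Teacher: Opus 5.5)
Your endgame is correct and coincides with the paper's Step~2. Once a periodic steady state $\hat p$ (resp.\ $\check p$) is trapped strictly between $\bar p$ and $p^*$ (resp.\ between $0$ and $\bar p$), Lemma~\ref{unstable} makes it unstable. The strict supersolution $\hat p-\delta\phi_{\hat p}$ (resp.\ subsolution $\check p+\delta\phi_{\check p}$) then keeps the monotone orbit away from it.

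However, every version of your trapping needs the uniform gap $\inf_{\R}(w-\bar p)>0$ (resp.\ $\sup_{\R}(v-\bar p)<0$). This gap is what allows $\bar p+\epsilon\phi\le w$. It is also what makes $\inf_k w(\cdot+kl)$ differ from $\bar p$: if, say, $w-\bar p\to0$ as $x\to-\infty$, then $\inf_k w(\cdot+kl)\equiv\bar p$ and your construction yields nothing. You flag this as ``the main obstacle'' but do not prove it, and the tools you name do not give it. Positivity of $w-\bar p$ over one period says nothing about its infimum over $\R$. The Harnack inequality across interfaces only gives cell-by-cell comparability of $w-\bar p$ with uniform constants, which is compatible with exponential decay at infinity. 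A stationary front leaving $\bar p$ is exactly the object to be excluded, and only the instability of $\bar p$ excludes it. (Once the gap is known, the detour through $\tilde p$ is unnecessary: trap the orbit of $\bar p\pm\epsilon\phi$ directly by $w$ or $v$.)

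The missing step, which is Step~1 of the paper's proof, is a localized use of $\bar\lambda_1(l_1,l_2,\bar p)<0$.
\begin{itemize}
\item Since $\bar\lambda_1^R(l_1,l_2,\bar p)\to\bar\lambda_1(l_1,l_2,\bar p)$ as $R\to+\infty$, pick $R_2>l/2$ with $\bar\lambda_1^{R_2}(l_1,l_2,\bar p)<0$, and let $\psi_{R_2}$ be the Dirichlet eigenfunction of~\eqref{truncted}.
\item Set $v_\varepsilon=\bar p-\varepsilon\psi_{R_2}$ in $(-R_2,R_2)$, extended by $\bar p$ outside. For $0<\varepsilon\le\varepsilon_0$ this is a generalized supersolution (the kinks at $\pm R_2$ have the right sign), and so are all its $l\Z$-translates.
\item Slide $\varepsilon$ from $0$ up to $\varepsilon_0$ and apply the strong maximum principle at a first touching point. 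This gives $v<v_{\varepsilon_0}(\cdot-kl)$ on $(kl-R_2,kl+R_2)$ for every $k\in\Z$, with $\varepsilon_0$ independent of $k$.
\item Since $R_2>l/2$ and $\psi_{R_2}$ is bounded below on $[-l/2,l/2]$, this yields $\sup_{\R}(v-\bar p)<0$. The case of $w$ is symmetric, using $\bar p+\varepsilon\psi_{R_2}$.
\end{itemize}
Your appeal to a ``Proposition~\ref{proSTA}(ii)-type argument'' points in the right direction, since that dichotomy is driven by $\lambda_1<0$ at the state $0$. But it must be run on the linearization at $\bar p$, with the instability supplied by Lemma~\ref{unstable}, not on positivity plus Harnack.
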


\begin{proof} 
Let $\hat{l}_1$ and $\hat{l}_2$ be given by Lemma~\ref{unstable}. Without loss of generality, we can assume that $\hat{l}_1<L_1^c$ and $\hat{l}_2>\max\{2R,l_2^c\}$, where $R>0$ is given in Lemma~\ref{lem-Phi2}. Then let $l_1$ and $l_2$ be such that $0<l_1<\hat{l}_1$ and $l_2>\hat{l}_2$. We only prove the first conclusion, since the proof of the second one is similar. Let $\bar{p}$ be a periodic steady state of~\eqref{eq}-\eqref{patch} such that $0<\bar{p}<p^*$ and let $v$ be a steady state of~\eqref{eq}-\eqref{patch} such that $0\le v<\bar{p}$. Our goal is to show that $v\equiv0$ in $\R$.
	
	\medspace
	
\noindent	{\it Step 1: We show that~$\sup_{x\in\R}\big(v(x)-\bar{p}(x)\big)<0$.} First, since $\bar{\lambda}_{1}(l_1,l_2,\bar{p})<0$ by Lemma~\ref{unstable}, and there is~$R_2>l/2$ such that~$\bar{\lambda}_{1}^{R_2}(l_1,l_2,\bar{p})<0,$ 
	where~$\bar{\lambda}_{1}^{R_2}(l_1,l_2,\bar{p})$ is the principal eigenvalue of~\eqref{truncted} with $R$ is replaced by $R_2$. For any $\varepsilon >0$, define
	\begin{equation}\label{defveps}
		v_{\varepsilon}(x)=\left\{
		\begin{array}{lll}
			\bar{p}(x)-\varepsilon\psi_{R_2}(x)\; & {\rm if}\,\,\,  |x|<R_2,\vspace{3pt}\\
			\bar{p}(x)\; & {\rm if} \,\,\,|x|\geq R_2,
		\end{array}\right.
	\end{equation}
	where $\psi_{R_2}$ is a fixed positive eigenfunction of~\eqref{truncted} corresponding to $\bar{\lambda}_{1}^{R_2}(l_1,l_2,\bar{p})$. With similar calculation as the proof of Proposition~\ref{pro_pl1l2}, there is $\varepsilon_0>0$ small enough such that $0<v_{\varepsilon}\le\bar{p}$ in $\R$ for all $0<\varepsilon\le\varepsilon_0$, and $v_{\varepsilon}$ satisfies
	\begin{equation}\label{supersol}
	\begin{split}
		\partial_{t}v_{\varepsilon}-d(x)\partial_{xx}v_{\varepsilon}-f(x,v_{\varepsilon}) >0 \text{ in }x\in(-R_2,R_2)\backslash S,
	\end{split}
\end{equation}
 as well as the interface conditions. This together with the fact that $\bar{p}$ is a stationary solution of equation~\eqref{eq}-\eqref{patch} and that $v'_{\varepsilon}(x^-)\geq v'_{\varepsilon}(x^+)$ at $x=\pm R_2$ imply that $v_{\varepsilon}$ is a supersolution of equation~\eqref{eq}-\eqref{patch}.
	
	Now, for any $k\in\Z$ and for any $0<\varepsilon\le\varepsilon_0$, the function $v_{\varepsilon}(\cdot-kl)$ is also a supersolution of~\eqref{eq}-\eqref{patch}. Since $v<\bar{p}$ in $\R$, it follows then from the strong maximum principle that, for every $0<\varepsilon\le\varepsilon_0$, there holds $v(x)<v_{\varepsilon}(x-kl)$ for all $x\in(kl-R_2,kl+R_2)$ and for all~$k\in\Z$. Since $R_2>l/2$ and $\psi_{R_2}$ is continuous and positive in $(-R_2,R_2)$, one infers that $\sup_{x\in\R}\big(v(x)-\bar{p}(x)\big)<0$.
	
	\medspace
	
\noindent	{\it Step 2: We show that~$v\equiv 0$ in $\R$.} Let $\phi$ be a principal eigenfunction of the periodic problem~\eqref{eigenv2}, associated with the principal eigenvalue~$\bar{\lambda}_1(l_1,l_2,\bar{p})$. With similar calculations as in the proof of Proposition~\ref{pro_pl1l2}, there is~$\eta_0>0$ such that for all~$0<\eta\le\eta_0$, the periodic function $\bar{p}-\eta\phi$ satisfies $v<\bar{p}-\eta\phi<\bar{p}$ and 
is indeed a strict supersolution of~\eqref{eq}-\eqref{patch}. As a consequence, the solution $u(t,x;\bar{p}-\eta_0\phi)$ of~\eqref{eq}-\eqref{patch} with initial condition $\bar{p}-\eta_0\phi$ is decreasing in $t>0$ and, from parabolic estimates in~\cite[Theorem~2.2]{HLZ-2024}, it converges as $t\to+\infty$, locally uniformly in $\R$, to a periodic steady state $p_{\infty}$ of~\eqref{eq}-\eqref{patch} such that $u(t,\cdot;\bar{p}-\eta_0\phi)|_{\bar{I}}\to p_{\infty}|_{\bar{I}}$ in $C^2(\bar{I})$ for each patch $I\subset\R$ as $t\to+\infty$. Moreover, 
$$
0\le v\le p_{\infty}<\bar{p}-\eta_0\phi<\bar{p}<p^*\hbox{ in }\R.
$$
	If $p_{\infty}\not\equiv0$ in $\R$, then $0<p_{\infty}<p^*$ from the strong maximum principle, whence $p_{\infty}$ is unstable from Lemma~\ref{unstable}, in the sense that $\bar{\lambda}_1(l_1,l_2,p_{\infty})<0$. Therefore, as above, by calling $\varphi$ a principal periodic eigenfunction of the periodic problem~\eqref{eigenv2} associated with $\bar{\lambda}_1(l_1,l_2,p_{\infty})$, it follows that the functions~$p_{\infty}+\kappa\varphi$ are subsolutions of~\eqref{eq}-\eqref{patch} for all $\kappa>0$ small enough. In particular, since $p_{\infty}<\bar{p}-\eta_0\phi$ in~$\R$ and both functions are periodic and continuous, there is $\kappa_0>0$ such that $p_{\infty}+\kappa_0\varphi$ is a subsolution of~\eqref{eq}-\eqref{patch} and $p_{\infty}+\kappa_0\varphi<\bar{p}-\eta_0\phi$ in $\R$, whence $p_{\infty}+\kappa_0\varphi<u(t,\cdot;\bar{p}-\eta_0\phi)$ in $\R$ for all $t>0$, from the comparison principle. Finally, passing to the limit as $t\to+\infty$ gives $p_{\infty}+\kappa_0\varphi\le p_{\infty}$ in $\R$, which is impossible. Hence, $p_{\infty}\equiv 0$ and $v\equiv 0$ in $\R$, and the proof of Lemma~\ref{noexstate} is complete.
\end{proof}

\medspace

As a consequence of Lemma~\ref{noexstate}, for every $0<l_1<\hat{l}_1$ and $l_2>\hat{l}_2$, and any periodic steady state $\bar{p}$ satisfying $0<\bar{p}<p^*$, system~\eqref{eq}-\eqref{patch} restricted to 
$$
E_1=\{u\in C(\R,[0,p^*])\,|\,0\leq u\leq \bar{p}\hbox{ in }\R\},
$$
and to
$$
E_2=\{u\in C(\R,[0,p^*])\,|\, \bar{p}\leq u\leq p^*\hbox{ in }\R\},
$$
respectively, has a monostable structure. 

In order to prove the existence of pulsating fronts for~\eqref{eq}-\eqref{patch}, one will verify a counter-propagation condition on the spreading speeds of these subsystems, as defined in~\cite{FZ-2015}. To do so, denote
$$
\mathcal{C}^-(0,\bar p) = 
\left\{
u\in C(\mathbb{R},[0,p^*])
\;\middle|\;
\begin{aligned}
	&0\le u\le \bar p,\,\limsup_{x\to+\infty}(u(x)-\bar p(x))<0,\\
	&u(x)=\bar p(x) \quad\,\text{ for } x\ll -1
\end{aligned}
\right\},
$$
$$
\mathcal{C}^+(\bar{p},p^*)= 
\left\{
\begin{aligned}
	u\in C(\mathbb{R},[0,p^*])
\end{aligned}
\;\middle|\;
\begin{aligned}
	&\bar{p}\leq u \le p^*,\,\liminf_{x\to-\infty}(u(x)-\bar{p}(x))>0,	\\
	&u(x)=\bar{p}(x) \quad\hbox{ for } x \gg 1
\end{aligned}
\right\}.
$$

\begin{lemma}\label{spreadingspeed}
Assume that $\int_0^{K_2} f_2(s) \mathrm{d} s>0$. Let $\hat{l}_1>0$ and $\hat{l}_2>0$ be given by Lemma~\ref{noexstate}. For every $0<l_1<\hat{l}_1$ and $l_2>\hat{l}_2$, and for every periodic steady state $\bar{p}$ of~\eqref{eq}-\eqref{patch} such that $0<\bar{p}<p^*$, there are some real numbers $c^+>0$ and $c^->0$ such that
	\begin{equation}\label{rightspeed}
		\begin{cases}
			\displaystyle\mathop{\limsup}_{t\to+\infty,\, x\geq -c^-t} u(t,x;u_0)=0 & \hbox{for all }u_0\in\mathcal{C}^-(0,\bar{p}),\vspace{3pt}\\
			\displaystyle\mathop{\limsup}_{t\to+\infty,\, x\leq c^+t} |u(t,x;\tilde{u}_0)-p^*(x)|=0 & \hbox{for all }\tilde{u}_0\in\mathcal{C}^+(\bar{p},p^*).
		\end{cases}
	\end{equation}
\end{lemma}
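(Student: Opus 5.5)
The plan is to apply the monostable spreading-speed theory of \cite{LZ-2010} (as in the proof of Theorem~\ref{thmSP}) separately to the two order intervals $E_1$ and $E_2$. By Lemma~\ref{noexstate}, these are monostable subsystems. On $E_1$ the state $\bar p$ is the unstable fixed point and $0$ is the attractor. On $E_2$ the state $\bar p$ is again unstable and $p^*$ is the attractor.

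Consider first $E_2$. Change variables $w=u-\bar p$, so that the semiflow $Q_t$ restricted to $[\bar p,p^*]$ becomes a monotone, $l$-periodic, compact semiflow on $\mathcal C_{p^*-\bar p}$. It has exactly two periodic fixed points, $0$ and $p^*-\bar p$, by Lemma~\ref{noexstate}. The hypotheses (E1)--(E5) are checked exactly as in the proof of Theorem~\ref{thmSP}.

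The one genuinely new point is (E5) together with convergence for nontrivial data. Take $w_0\not\equiv0$ with $\bar p\le\bar p+w_0\le p^*$. Since $\bar\lambda_1(l_1,l_2,\bar p)<0$ by Lemma~\ref{unstable}, the truncated eigenvalue satisfies $\bar\lambda_1^{R_2}<0$ for some large $R_2$. Then $\bar p+\kappa\psi_{R_2}$, extended by $\bar p$ outside $(-R_2,R_2)$, is a generalized subsolution for small $\kappa$, by the same computation as in Theorem~\ref{thmPERS} and Step~1 of Lemma~\ref{noexstate}. By the strong maximum principle it lies below $u(1,\cdot)$. The solution starting from it increases in $t$ to a steady state $w$ with $\bar p<w\le p^*$. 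Lemma~\ref{noexstate} forbids $\bar p<w<p^*$, so $w=p^*$, and $u(t,\cdot)\to p^*$ locally uniformly.

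\cite[Theorem 5.1, 5.2]{LZ-2010} then gives rightward and leftward speeds. For data in $\mathcal C^+(\bar p,p^*)$, which equal $p^*$-ish on the left in the sense $\liminf(u-\bar p)>0$ and equal $\bar p$ on the right, one uses the one-sided (front-like) version of the spreading result. That is, there is $c_+^*$ with $|u-p^*|\to0$ uniformly on $x\le ct$ for every $c<c_+^*$. We then set $c^+\in(0,c_+^*)$.

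Positivity $c^*_+>0$ follows as in Theorem~\ref{thmSP}. Since $u(T,\cdot;\tilde u_0)\to p^*$ locally uniformly and $\tilde u_0$ stays bounded away from $\bar p$ on the left, after some time $T$ one gets $u(T,\cdot)\ge\tilde u_0(\cdot-l)$. This uses the fact that a compactly supported bump above $\bar p$ near any point is eventually lifted close to $p^*$, uniformly with respect to $l\mathbb Z$-translates. Iterating gives $u(mT,\cdot)\ge\tilde u_0(\cdot-ml)$, hence linear advance of the front at speed at least $l/T>0$.

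The case $E_1$ is symmetric, with the reflection $x\mapsto-x$ and the reversed order. Set $v=\bar p-u\in[0,\bar p]$; the flow is monotone and has fixed points $0$ and $\bar p$. Instability of $\bar p$ gives subsolutions below, of the form $\bar p-\kappa\psi_{R_2}$, which are supersolutions for $u$. Combined with Lemma~\ref{noexstate}, this shows that data strictly below $\bar p$ somewhere converge to $0$. The same iteration then yields a leftward invasion speed $c^->0$ of the state $0$ into $\bar p$, which is the first line of \eqref{rightspeed}.

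The main obstacle is the uniform "lifting" step used to get positive speeds and to verify the one-sided hypotheses of \cite{LZ-2010}. One must show that a localized perturbation of size $\delta$ above $\bar p$ (respectively below it) converges to $p^*$ (respectively $0$) on an interval of length more than $l$, in a finite time independent of the location. I would obtain this from the compactly supported sub/supersolutions built on $\psi_{R_2}$, together with periodicity, which reduces the question to finitely many positions.
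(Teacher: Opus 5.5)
Your route is valid in outline but differs from the paper's. You apply the abstract monostable theory of \cite{LZ-2010} to the two order intervals, after the changes of variables $u-\bar p$ and $\bar p-u$. Its hypotheses are available:
exactly two periodic fixed points by Lemma~\ref{noexstate};
instability of $\bar p$ by Lemma~\ref{unstable}, whose periodic eigenfunction gives arbitrarily small periodic $\alpha$ with $Q_t[\alpha]\gg\alpha$ in the new variables;
periodicity, monotonicity and compactness as in Theorem~\ref{thmSP}.
The half-line statements then follow from the compactly supported ones. Compare the datum with the $l\mathbb{Z}$-translates of one fixed bump placed where the datum is at distance $\ge\delta'>0$ from $\bar p$, and use that the convergence for that bump is uniform. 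You should write this out, because the results of \cite{LZ-2010} quoted in Theorem~\ref{thmSP} concern data vanishing outside a bounded interval or exceeding a positive constant on a long interval.

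The paper avoids the black box. It first proves \eqref{convhalf}, uniform convergence to the invading state on half-lines, using one truncated supersolution $v_\varepsilon$, translates, and Lemma~\ref{noexstate}; this is exactly the ``uniform lifting'' you flag as the main obstacle. It then takes one special front-like datum $w_0$ equal to $\delta$ on $\mathbb{R}^+$, shows $u(t_1,\cdot;w_0)\le w_0(\cdot+l)$, and iterates. It dominates an arbitrary datum by $w_0$ after a finite time and concludes for every $c^-<l/t_1$ by a compactness argument on entire solutions. Your approach produces genuine spreading speeds of the two subsystems, which is more than the lemma needs; the paper's is self-contained and gives only the lower bound required for (A6).

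One step fails as written: you cannot get $u(T,\cdot;\tilde u_0)\ge\tilde u_0(\cdot-l)$ for a general $\tilde u_0\in\mathcal{C}^+(\bar p,p^*)$. If, say, $\tilde u_0=p^*$ on a left half-line, then $\tilde u_0(\cdot-l)=p^*$ there. But $u(T,\cdot;\tilde u_0)<p^*$ everywhere by the strong comparison principle, and uniform lifting ``close to $p^*$'' does not help. The same problem arises in $E_1$ for data vanishing on a right half-line.

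The iteration must be run on a datum kept at positive distance from the invading state. One option is a compactly supported $\phi$ (in the variable $u-\bar p$) with $\phi\le\omega\ll p^*-\bar p$, as in Theorem~\ref{thmSP}. Then $Q_T[\phi]\ge\phi(\cdot\pm l)$ for large $T$, and \cite[Theorem 5.1(1)]{LZ-2010} forces $c^*_\pm\ge l/T$. The other option is the mirror image of the paper's $w_0$, with $\bar p\le\tilde w_0\le p^*-\delta$ and $\tilde w_0=p^*-\delta$ on $\mathbb{R}^-$. Since $c^*_\pm$ do not depend on the datum, one good datum suffices.

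Also, $\bar p$ need not be even, so the subsystems are not reflection-invariant and the shortcut $c^*_+=c^*_-$ from Theorem~\ref{thmSP} is unavailable. Running the iteration with both shifts $\pm l$ gives both speeds positive at once, which is what you want before invoking the lower estimate of \cite{LZ-2010}. With this repair your argument goes through.
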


\begin{proof}
	We only give the proof of the first assertion~\eqref{rightspeed}, since the arguments for the other one are similar. First, one claims that for any $u_0\in\mathcal{C}^-(0,\bar{p})$ and any constant $C\in\R$, there holds
	\begin{equation}\label{convhalf}
		\lim_{t\to+\infty} u(t,x;u_0)=0\quad \hbox{uniformly for}\,\,x\in[C,+\infty).
	\end{equation}
	So, fix any $u_0\in\mathcal{C}^-(0,\bar{p})$, any real number $C$, any $k_0\in\NN$ such that $C\ge-k_0l$, and let~$\eta>0$ be arbitrary. There are then $\varepsilon>0$ small enough and $m_0\in\NN$ large enough such that
	\begin{equation}\label{u0}
	  u_0(\cdot+ml)\leq v_{\varepsilon}\ \hbox{ for all }m\ge m_0,\ m\in\NN,
	\end{equation}
	where $v_{\varepsilon}$ is defined in~\eqref{defveps} and is a strict supersolution of~\eqref{eq}-\eqref{patch}, in the sense of~\eqref{supersol}, as well as the interface conditions hold. It follows from the comparison principle that $u(t,x;v_{\varepsilon})<v_{\varepsilon}(x)$ and $u(t,x;v_{\varepsilon})$ is decreasing in $t>0$. By parabolic estimates in~\cite[Theorem~2.2]{HLZ-2024}, $u(t,x;v_{\varepsilon})$ converges as $t\to+\infty$ locally uniformly in~$x\in\R$ to a stationary solution $v_{\varepsilon}^{\infty}$ of equation~\eqref{eq}-\eqref{patch} and $u(t,x;v_{\varepsilon})|_{\bar{I}}\to v_{\varepsilon}^{\infty}|_{\bar{I}}$ in $C^2(\bar{I})$ for each patch $I\subset\R$, with $0\leq v_{\varepsilon}^{\infty}<\bar{p}$. Lemma~\ref{noexstate} and the strong maximum principle imply that~$v_{\varepsilon}^{\infty}\equiv 0$. Therefore, there is $T>0$ such that
	$$
	0\le u(t,y;v_{\varepsilon})\le\eta\ \hbox{ for all }t\ge T\hbox{ and }|y|\le(k_0+m_0+1)l.
	$$
	For any $x\ge C\,(\ge-k_0l)$, there is $a_x\in\Z$ such that $a_x\ge-k_0$ and $a_xl\le x\le(a_x+1)l$. With $m_x=k_0+m_0+a_x\ge m_0$, one has $|x-m_xl|\le|x-a_xl|+(k_0+m_0)l\le(k_0+m_0+1)l$. Hence, from the comparison principle and the periodicity of~\eqref{eq}-\eqref{patch}, it follows from~\eqref{u0} that, for all $t\ge T$,
	$$
	0\le u(t,x;u_0)=u(t,x-m_xl;u_0(\cdot+m_xl))\le u(t,x-m_xl;v_{\varepsilon})\le\eta.
	$$
	The claim~\eqref{convhalf} is thereby proved.
	
	Next, we fix a real number $\delta$ such that $0<\delta<\min_{\R}\bar{p}$ and a function $w_0\in\mathcal{C}^-(0,\bar{p})$ such that $\delta\le w_0\le\bar{p}$ in $\R$ and $w_0=\delta$ in $\R^+$. From~\eqref{convhalf} applied to $w_0$, and since $0\le u(t,x;w_0)\le\bar{p}(x)$ for all $t\ge0$ and $x\in\R$, there is a time $t_1>0$ such that $0\le u(t_1,x;w_0)\le w_0(x+l)$ for all~$x\in\R$. From the comparison principle, it follows by immediate induction that
	\begin{equation}\label{nt1}
			0\le u(mt_1,x;w_0)\le w_0(x+ml)\ \hbox{ for all }m\in\NN\hbox{ and }x\in\R.
	\end{equation}
	
	Finally, one shows that the first assertion in~\eqref{rightspeed} holds with any positive constant $c^-$ such that $0<c^-<l/t_1$. Fix any function $u_0\in\mathcal{C}^-(0,\bar{p})$. By~\eqref{convhalf} and $u(t,\cdot;u_0)\le\bar{p}$, there is $T>0$ such that $0\le u(T,\cdot;u_0)\le w_0$, whence
	\begin{equation}\label{nt1bis}
		0\le u(T+mt_1,x;u_0)\le u(mt_1,x;w_0)\le w_0(x+ml)\ \hbox{ for all }m\in\NN\hbox{ and }x\in\R
	\end{equation}
	by~\eqref{nt1} and the comparison principle. Let us now argue by contradiction and assume that ${\limsup}_{t\to+\infty,\, x\ge-c^-t} u(t,x;u_0)>0$. Then there are some sequences $(\tau_k)_{k\in\NN}$ in $(0,+\infty)$ and $(x_k)_{k\in\NN}$ in $\R$ such that $x_k\ge-c^-\tau_k$ for all $k\in\NN$, $\tau_k\to+\infty$ as $k\to+\infty$ and $\liminf_{k\to+\infty}u(\tau_k,x_k;u_0)>0$. For $k$ large enough, one can write $\tau_k=T+m_kt_1+\tilde{\tau}_k$ with $m_k\in\NN$, $0\le\tilde{\tau}_k\le t_1$ and $m_k\to+\infty$ as $k\to+\infty$. Write also $x_k=x'_k+x''_k$ with $x'_k\in l\Z$ and $x''_k\in(-l_1,l_2]$. Up to extraction of a subsequence, one can assume that $\tilde{\tau}_k\to\tau\in\R$ and $x''_k\to y\in[-l_1,l_2]$ as $k\to+\infty$. For $k$ large enough, denote
	$$u_k(t,x)=u(t+\tau_k,x+x'_k;u_0)\ \hbox{ for }t\ge-\tau_k,\ x\in\R.$$
	From parabolic estimates in~\cite[Theorem~2.2]{HLZ-2024}, the functions $u_k$ converge locally uniformly in $\R^2$, up
	to extraction of a subsequence, to a solution $u_{\infty}(t,x)$ of~\eqref{eq}-\eqref{patch} defined
	for all $(t,x)\in\R^2$ such that $u_k|_{\R\times\bar{I}}\to u_{\infty}|_{\R\times\bar{I}}$ in $C_{t,x}^{1,2}(\R\times\bar{I})$ for each patch $I\subset\R$, and
	$0\le u_{\infty}(t,x)\le\bar{p}(x)$ for all $(t,x)\in\R^2$, while
	$u_{\infty}(0,y)>0$. Furthermore, for any given~$h\in\Z$ and $x\in\R$, one has, for all $k$ large enough,
	\begin{equation}\label{ukmt1}
		0\le u_k(-ht_1-\tilde{\tau}_k,x)=u(T+m_kt_1-ht_1,x+x'_k;u_0)\le w_0(x+x'_k+(m_k-h)l)
	\end{equation}
	by~\eqref{nt1bis}. Note that
	\begin{equation*}
		\begin{aligned}
			x'_k+m_kl&\geq x_k-l_2+m_kl\\
			&\geq -c^-(T+m_kt_1+t_1)+l_1+(m_k-1)l\\
			&=(m_k-1)(-c^-t_1+l)-c^-(T+2t_1)+l_1.
		\end{aligned}
	\end{equation*}
	Thus $x'_k+m_kl\to+\infty$ as~$k\to+\infty$ since $c^-<l/t_1$, $m_k\to+\infty$ and
	$l_1<\hat{l}_1$. As a consequence, it follows from~\eqref{ukmt1} and the definitions
	of~$u_{\infty}$ and~$w_0$ that $0\le u_{\infty}(-ht_1-\tau,x)\le\delta\le w_0(x)$ for all $h\in\Z$
	and $x\in\R$. One infers that $u_{\infty}\equiv 0$ in $\R^2$. Indeed, for any $(t,x)\in\R^2$, one
	has, for all $h\in\NN$ large enough,
	$$0\le u_{\infty}(t,x)=u(t+ht_1+\tau,x;u_{\infty}(-ht_1-\tau,\cdot))\le u(t+ht_1+\tau,x;w_0).$$
	The property~\eqref{convhalf} applied with $w_0$ implies that $u(t+ht_1+\tau,x;w_0)\to0$ as $h\to+\infty$ whence~$u_{\infty}(t,x)=0$ for all $(t,x)\in\R^2$, which contradicts $u_{\infty}(0,y)>0$. Therefore, the first assertion of~\eqref{rightspeed} is shown and the proof of Lemma~\ref{spreadingspeed} is complete.
\end{proof}

\vs

Based on the above preparations, one is ready to prove Theorem~\ref{bisTW}.

\medspace

\begin{proof}[Proof of Theorem~\ref{bisTW}] 
	Let $\hat{l}_1>0$ and $\hat{l}_2>0$ be given as in Lemma~\ref{noexstate}. Fix any $0<l_1<\hat{l}_1$ and $l_2>\hat{l}_2$. For any $t\geq 0$, define $\widetilde{Q}_t: \mathcal{C}_{p^*}\to \mathcal{C}_{p^*}$ by
	\begin{equation*}\label{semiflow}
		\widetilde{Q}_t[u_0]=u(t,\cdot;u_0),
	\end{equation*}
	where $\mathcal{C}_{p^*}$ is defined by~\eqref{Cp} with $p$ replaced by $p^*$.
	By arguments similar to those used in the proof of Theorem~\ref{thmTW}, together with Lemmas~\ref{unstable} and~\ref{spreadingspeed}, the semiflow $(\widetilde{Q}_t)_{t\ge0}$ satisfies the following properties:
	\begin{itemize}
		\item[(A1)](Periodicity) $T_y\big[\widetilde{Q}_t[\varphi]\big]=\widetilde{Q}_t\big[T_y[\varphi]\big]$ for all $\varphi\in  \mathcal{C}_{p^*}$, $t>0$ and $y\in l\Z$, where $T_y: \mathcal{C}_{p^*}\to  \mathcal{C}_{p^*}$ is the translation operator defined by $T_y[\psi]=\psi(\cdot-y)$.
		\item[(A2)](Continuity) For any $t>0$, $\widetilde{Q}_t$ is continuous with respect to the compact open topology.
		\item[(A3)](Monotonicity) For any $t>0$, $\widetilde{Q}_t$ is order preserving in the sense that $\widetilde{Q}_t[\varphi_1]\geq \widetilde{Q}_t[\varphi_2]$ whenever $\varphi_1\ge\varphi_2$ in $ \mathcal{C}_{p^*}$.
		\item[(A4)](Compactness) For any $t>0$, $\widetilde{Q}_t$ is compact with respect to the compact open topology.
		\item[(A5)](Bistability) Define $\mathcal{C}_{per} :=\{v\in\mathcal{C}: v(x)=v(x+l), \,0\leq v(x)\leq p^*(x),\,\forall x\in\R\}$.
		For any $t~\!>~\!0$,~$\widetilde{Q}_t$ maps~$\mathcal{C}_{per}$ to itself and is strongly monotone on $\mathcal{C}_{per}$ in the sense that $\inf_{x\in\R}\big(\widetilde{Q}_t[\varphi_1](x)-\widetilde{Q}_t[\varphi_2](x)\big)>0$ whenever $\varphi_1\ge\varphi_2$ in $\mathcal{C}_{per}$ with $\varphi_1\not\equiv\varphi_2$. 	
			Furthermore, on the one hand, the function $p^*\in \mathcal{C}_{per}$ is a stationary solution of~\eqref{eq}-\eqref{patch}. It follows from~Lemma~\ref{lem_strong} that ~$p^*$ is strongly stable from below, namely for every~$t>0$ there is~$\eta_0>0$ such that
			$$
		 \inf_{x\in\R}\big(\widetilde{Q}_t[p^*-\eta\phi](x)-(p^*(x)-\eta\phi(x))\big)>0,\quad \forall 0<\eta\le\eta_0,
			$$ 
where $\phi$ denotes the periodic principal eigenfunction of~\eqref{eigenv2} with~$\bar{\lambda}_1(l_1,l_2,p^*)>0$. On the other hand, from Proposition~\ref{lem-STA}(ii) that $0$ is a stable solution (i.e. $\lambda_1>0$) of problem~\eqref{eq}-\eqref{patch} and it is strongly stable from above when $l_1<\hat{l}_1<L_1^c$ and $l_2>\hat{l}_2>l_2^c$, in the sense of~\cite{FZ-2015}, namely, for every~$t>0$ there is~$\eta_0>0$ such that 
			$$
			\sup_{x\in\R}\big(\widetilde{Q}_t[\eta](x)-\eta\big)<0 \quad 0<\eta\le\eta_0.
			$$ 
	
		Lastly, any stationary solution~$0<\bar{u}<p^*$ in~$\mathcal{C}_{per}$ is strongly unstable from above and below in the sense of~\cite{FZ-2015} since for every $t>0$, there is~$\epsilon_0>0$ such that 
		$$
		\inf_{x\in\R}\big(\widetilde{Q}_t[\overline{p}+\epsilon\varphi](x)-(\overline{p}(x)+\epsilon\varphi(x))\big)>0
		$$ 
		and $$
		\sup_{x\in\R}\big(\widetilde{Q}_t[\overline{p}-\epsilon\varphi](x)-(\overline{p}(x)-\epsilon\varphi(x))\big)<0
		$$ in $\R$ for all $0<\epsilon\le\epsilon_0$, where $\varphi$ denotes the periodic principal eigenfunction of~\eqref{eigenv2} with~$\lambda=\bar{\lambda}_1(l_1,l_2,\bar{p})>0$.  Indeed, for every $t>0$, the inequalities~$\widetilde{Q}_t[\overline{p}+\epsilon\varphi]>\overline{p}+\epsilon\varphi$ and $\widetilde{Q}_t[\overline{p}-\epsilon\varphi]<\overline{p}-\epsilon\varphi$ in~$\R$ for all $0<\epsilon\le\epsilon_0$ follow from the fact that the periodic functions $\overline{p}+\epsilon\varphi$ and $\overline{p}-\epsilon\varphi$ are respectively strict sub- and supersolutions of the elliptic equation associated with~\eqref{eq}-\eqref{patch},  which can be verified by calculations similar to~\eqref{supersol1}.
		\item[(A6)](Counter-propagation)  For each stationary solution $\bar{p}\in \mathcal{C}_{per}$ with $0<\bar{p}<p^*$, one has~$c^-_*(0,\bar{p})+c^+_*(\bar{p},p^*)>0$, where $c^-_*(0,\bar{p})$ and $c^+_*(\bar{p},p^*)$ are the spreading speeds defined by
		$$\begin{aligned}
			c^-_*(0,\bar{p}) & = \sup\big\{c\in\R\,\big|\, \limsup_{t\to+\infty,\,x\geq-ct} u(t,x;u_0)=0,\quad \forall\,u_0\in\mathcal{C}^-(0,\bar{p})\big\},\vspace{3pt}\\
			c^+_*(\bar{p},p^*) & = \sup\big\{c\in\R\,\big|\, \limsup_{t\to+\infty,\,x\leq ct} |u(t,x;\tilde{u}_0)-p^*|=0,\quad \forall\,\tilde{u}_0\in\mathcal{C}^+(\bar{p},p^*)\big\}.
		\end{aligned}$$
		Indeed, Lemma~\ref{spreadingspeed} implies that $c^-_*(0,\bar{p})\ge c^->0$ and $c^+_*(\bar{p},p^*) \ge c^+>0$, with the notations of Lemma~\ref{spreadingspeed}. Following~\cite{FZ-2015}, $c^-_*(0,\bar{p})$ is called the leftward spreading speed of equation~\eqref{eq}-\eqref{patch} on $\mathcal{C}^-(0,\bar{p})$, and $c^+_*(\bar{p},p^*)$ the rightward spreading speed of equation~\eqref{eq}-\eqref{patch} on~$\mathcal{C}^+(\bar{p},p^*)$ (Lemma~\ref{spreadingspeed} is then stronger than the counter-propagation condition given in~\cite{FZ-2015}, which is just defined as the positivity of the sum of these spreading speeds).
	\end{itemize}
	
	Having in hand the properties (A1)-(A6),  we then see from \cite[Proposition 3.1 and Theorems~3.4 and~4.1, and Remark~4.1]{FZ-2015} that for any $0<l_1<\hat{l}_1$ and $l_2>\hat{l}_2$, equation~\eqref{eq}-\eqref{patch} admits a pulsating front~$\widetilde{W}(x-ct,x)$ with speed $c\in\R$ such that~$\widetilde{W}(\xi,x)$ is nonincreasing in~$\xi$.
	
	Furthermore, we show that $c\geq 0$, up to decreasing $\hat{l}_1$ and increasing $\hat{l}_2$ if necessary. By contradiction, we assume that there exist sequences $l_1^m, l_2^m \in (0, +\infty)$ such that $l_1^m \to 0$ and $l_2^m \to +\infty$ as $m \to +\infty$, with the corresponding wave speeds satisfying $c_m < 0$ for all $m \in \mathbb{N}$.  Let $R>0$ and $\phi_2\in C^2([-R,R])$ be given by Lemma~\ref{lem-Phi2}. For sufficiently large $m$ such that $l_2^m \ge 2R$, we can choose $x_0^m \in (0, l_2^m)$ such that $[x_0^m-R, x_0^m+R] \subset (0, l_2^m)$. We then define a compactly supported function~$\omega_0^m$ on $\R$ by
	\begin{equation*}\label{tildev0}
		\omega_0^m(x)=
		\begin{cases}
			\phi_2(x-x_0^m), & x\in [x_0^m-R, x_0^m+R], \\
			0, & \text{otherwise}.
		\end{cases}
	\end{equation*}
	Similar to the proof of Proposition~\ref{pro_pl1l2}, $\omega_0^m$ is a subsolution to~\eqref{eq}-\eqref{patch}. Let $u^m(t,x)$ be the solution to~\eqref{eq}-\eqref{patch} with the initial datum $u^m(0,x) = \omega_0^m(x)$ for $t>0$ and $x\in\R$. We have that $u^m(t,x)$ is nondecreasing in $t$, and since $\omega_0^m \not\equiv 0$, the strong maximum principle implies that $u^m(t,x) > 0$ for all $t > 0$ and $x \in \R$. In particular, we have
	\begin{equation}\label{lower_bound}
		u^m(t, x_0^m) \ge u^m(0, x_0^m) = \phi_2(0) > \theta, \quad \forall t > 0.
	\end{equation}
	
	On the other hand, consider the pulsating front $\widetilde{W}(x-c_m t, x)$ connecting $p_m^*$ to $0$, with $p_m^*$ satisfying~\eqref{sta4}. By~\eqref{claim_pK2}, we have $p_m^*\to K_2$ as $m\to+\infty$ uniformly in $\R$.
	Recall from Lemma~\ref{lem-Phi2} that $0\leq \phi_2<K_2$ with $\max_{[-R,R]}\phi_2=\phi_2(0)>\theta$. Thus, we can fix $\varepsilon>0$ small enough such that $K_2-\varepsilon>\phi_2(0)$. Then, for all sufficiently large $m$, we have that
	$$
	\lim_{\xi \to -\infty} \widetilde{W}(\xi, \cdot) = p_m^*\ge K_2 - \varepsilon\text{ uniformly in }\R.
	$$ 
	We can choose $\xi_0 \ll-1$ such that $\omega_0^m(x) \le \widetilde{W}(x + \xi_0, x)$ for $x\in\R$. Applying the comparison principle (Proposition~\ref{proCP}) yields
	\begin{equation}\label{upper_bound}
		u^m(t, x) \le \widetilde{W}(x - c_m t + \xi_0, x), \quad \forall t > 0, \, x \in \R.
	\end{equation}
	Since $c_m < 0$ and $x_0^m\in(0,l_2^m)$ for all $m\in\NN$, we have $\lim_{t \to +\infty} (x_0^m - c_m t + \xi_0) = +\infty$ for all $m\in\NN$. Recall that $\lim_{\xi \to +\infty} \widetilde{W}(\xi, x) = 0$ uniformly with respect to $x \in \R$. Therefore, using parabolic estimate in~\cite[Theorem~2.2]{HLZ-2022} and passing to the limit as $t \to +\infty$ in \eqref{upper_bound} gives
	$$
	\lim_{t \to +\infty} u^m(t, x_0^m) \le \lim_{t \to +\infty} \widetilde{W}(x_0^m - c_m t + \xi_0, x_0^m) = 0.
	$$
	This directly contradicts ~\eqref{lower_bound}. Thus, we have that $c\geq0$. 
	
To end the proof, one needs to show that $c>0$ for all $l_1<\hat{l}_1$ and $l_2>\hat{l}_2$, up to decreasing $\hat{l}_1$ and increasing $\hat{l}_2$ again if necessary. Assume by contradiction that there are sequences $l_1^m$ and $l_2^m$ in $(0,+\infty)$,  such that $l_1^m\to0$, $l_2^m\to+\infty$ as $m\to+\infty$, and $c_{m}=0$ for all $m\in\NN$. Namely, for each $m\in\NN$, there is function $\widetilde{W}_{m}:\R\to\R$ which is continuous, and $\widetilde{W}_{m}\in C^2(\bar{I}_m)$ for each patch $I_m$ as in~\eqref{Im}. Moreover, $\widetilde{W}_{m}$ satisfies
\begin{equation*}\label{stationary}
	\begin{cases}
		-d(x) \widetilde{W}_{m}''(x)= f(x,\widetilde{W}_{m}), & x\in\R\backslash S_m,\\
		\widetilde{W}_{m}(x^-)=\widetilde{W}_{m}(x^+),\quad \widetilde{W}_{m}'(x^-)=\sigma \widetilde{W}_{m}'(x^+), & x=nl^m,\\
		\widetilde{W}_{m}(x^-)=\widetilde{W}_{m}(x^+),\quad \sigma \widetilde{W}_{m}'(x^-)=\widetilde{W}_{m}'(x^+), & x=nl^m+l_2^m,\\
		\widetilde{W}_{m}(-\infty)=p_{m}^*,\, \widetilde{W}_{m}(+\infty)=0\ \hbox{ and}\ 0<\widetilde{W}_{m}<p_{m}^*\hbox{ in }\R.
	\end{cases}
\end{equation*}
	Since $\int_0^{K_2}f_2(s)\,ds>0$ and $f_2$ satisfies~\eqref{f2}, there are $\theta^*\in(\theta,K_2)$ and $\gamma\in(\theta^*,K_2)$ such that
	\begin{equation}\label{upbound}
	\int_0^{\theta^*} f_2(s)\,ds=0, \hbox{ and } \int_0^\tau f_2(s)\,ds>0\hbox{ for }\tau\in [\gamma,K_2].
	\end{equation}
	 For every $m\in\NN$, there is $y_m\in\R$ such that $\widetilde{W}_m(y_m)=\gamma$. Write~$y_m=y_m'+\tilde{y}_m$, with $y_m'\in l^m\Z$ and $\tilde{y}_m\in (-l_1^m,l_2^m)$, and set $\widetilde{w}_m(x)=\widetilde{W}_m(x+y_m)$ for $x\in\R$ and~$m\in\NN$. Since $d(x)$ and $f(x,\cdot)$ are periodic in $x$, each function $\widetilde{w}_m$ obeys
	 \begin{equation*}
	 	\begin{cases}
	 		-d(x+\tilde{y}_m) \widetilde{w}_{m}''(x)= f(x+\tilde{y}_m,\widetilde{w}_{m}), & x\in\R\backslash S_m-\tilde{y}_m,\\
	 		\widetilde{w}_{m}(x^-)=\widetilde{w}_{m}(x^+),\quad \widetilde{w}_{m}'(x^-)=\sigma \widetilde{w}_{m}'(x^+), & x=nl^m-\tilde{y}_m,\\
	 		\widetilde{w}_{m}(x^-)=\widetilde{w}_{m}(x^+),\quad \sigma \widetilde{w}_{m}'(x^-)=\widetilde{w}_{m}'(x^+), & x=nl^m+l_2^m-\tilde{y}_m,\\
	 		\widetilde{w}_{m}(-\infty)=p_m^*,\, \widetilde{w}_{m}(+\infty)=0,\\
	 		\widetilde{w}_m(0)=\gamma,\,0<\widetilde{w}_{m}<p_m^*\hbox{ in }\R.
	 	\end{cases}
	 \end{equation*}
	 Similar to the proof of Lemma~\ref{lem_strong}, up to extraction of some subsequence, $\widetilde{w}_m\to \widetilde{w}_{\infty}$ as $m\to+\infty$ in $C^2_{\rm loc}(\R)$, where the function $0\le \widetilde{w}_{\infty}\le K_2$ solves
	\begin{equation}\label{limiteq}
		-d_2\widetilde{w}_{\infty}''+f_2(\widetilde{w}_{\infty})=0\hbox{ in }\R
	\end{equation}
	and $\widetilde{w}_{\infty}(0)=\gamma$, whence $0< \widetilde{w}_{\infty}<K_2$ in $\R$ from the strong elliptic maximum principle. As for equation~\eqref{w1} used in Lemma~\ref{unstable}, it follows from~\eqref{f2} that there are three types of solutions to equation~\eqref{limiteq}: the constant solutions (equal to $\theta$), the non-constant periodic solutions and the non-periodic ground state solutions converging to $0$ at $\pm\infty$.  In all cases, multiplying the equation~\eqref{limiteq} by $\widetilde{w}_{\infty}'$ and integrating on suitable intervals, it follows easily that
	$$
	\int_{\underline{\tau}}^{\bar{\tau}}f_2(s)ds=0,\quad \text{ where } 0\le\inf_{\R}\widetilde{w}_{\infty}=\underline{\tau}\le\overline{\tau}=\max_{\R}\widetilde{w}_{\infty}<K_2.
	$$ 
	It follows then from~\eqref{f2} and~\eqref{limiteq} that $\underline{\tau}\le\theta\le\bar{\tau}$ and that 
	$$
	\int_0^{\tau}f_2(s)ds\le0,\quad \forall0\le \tau\le\bar{\tau}.
	$$
	In particular, since~$\widetilde{w}_{\infty}(0)=\gamma\in(\theta^*,\bar{\tau}]$, one gets $$\int_0^{\gamma}f_2(s)ds\le0.$$ One has then reached a contradiction with~\eqref{upbound} and the proof of Theorem~\ref{bisTW} is thereby complete.
\end{proof}


\section{Extinction: proofs of Theorems~\ref{thmEXT1} and~\ref{thmEXT2}}\label{Sec-EXT}

In this section, we first give the proof of Theorem~\ref{thmEXT1}, that is, the solution $u$ of system~\eqref{eq}-\eqref{patch} will be extinct when $\lambda_1>0$, and the initial value $u_0$ is sufficiently small.

\medspace

\begin{proof}[Proof of Theorem~\ref{thmEXT1}]
Let $u$ be the solution to~\eqref{eq}-\eqref{patch} with a nonnegative continuous and compactly supported initial datum $u_0 \not \equiv 0$, satisfying $\|u_0\|_{L^{\infty}(\mathbb{R})} \leq \varepsilon$ with $\varepsilon>0$ to be determined later.
Let $(\lambda_1,\phi)$ be  the principal eigenpair of
problem~\eqref{eigenv}, with $\lambda_1>0$. Taking \(\kappa_0>0\)  small enough, 
for any \(\kappa\in(0,\kappa_0]\), there exists  a sufficiently small constant $0<\eta<\lambda_1$ such that
\begin{equation}\label{f}
	\|f_s(x,0)\kappa \phi(x)-f(x,\kappa \phi(x))\|_{L^{\infty}(\R)}\leq \eta\|\kappa \phi(x)\|_{L^{\infty}(\R)}=\kappa\eta.
\end{equation}

Given any $\kappa\in(0,\kappa_0]$, define
$$
\bar u(t,x)=\kappa e^{-\lambda t}\phi(x),\quad t\ge0,\ x\in\R,
$$
where $\lambda>0$ is chosen sufficiently small so that $\lambda_1\geq\lambda+\eta$. Since $0<\phi(x)\leq1$ and $\lambda_1>0$, together with~\eqref{eigenv} and~\eqref{f}, it follows that
\begin{equation*}
\begin{aligned}
\bar{u}_t - d(x) \bar{u}_{xx}-f(x,\bar{u})=&(\lambda_1-\lambda)\bar{u}+f_s(x,0)\bar{u}-f(x,\bar{u})\\
\geq&(\lambda_1-\lambda-\eta)\bar{u}\\
\geq&0.
\end{aligned}
\end{equation*}
In addition, it is clear that the interface conditions in~\eqref{eq} are satisfied.

We can select $\varepsilon > 0$ satisfying $0 < \varepsilon \leq \kappa \min_{\mathbb{R}} \phi(x)$ such that$$0 < u_0(x) \le \bar{u}(0,x), \quad x \in \mathbb{R}.
$$
Thus by using comparison principle (Proposition~\ref{proCP}), we obtain that
$$
0<u(t,x)\leq\bar{u}(t,x),\quad t\geq 0,~~ x\in\R.
$$
Passing to the limit $t\to+\infty$ then implies that
$$
\|u(t,\cdot)\|_{L^{\infty}(\R)}\to0 \text{ as } t\to+\infty.
$$
The proof of Theorem~\ref{thmEXT1} is thereby complete.
\end{proof}

\medspace

To prove Theorem~\ref{thmEXT2}, we introduce the following two key lemmas.  First, we use Lemma~\ref{lem-ex22} to demonstrate that initially compactly supported solution $u$ will remain small enough within finite times in the region far from the initial support. Subsequently, Lemma~\ref{lem-ex21} ensures the extinction of $u$ for $0<l_1<L_1^c$, provided that $l_2$ is sufficiently large and the initial datum $u_0$ (independent of $l_2$) is chosen to be sufficiently small.

\begin{lemma}\label{lem-ex22}
Let $R>0$, $A>0$ be fixed. Let $u$ be the solution of~\eqref{eq}-\eqref{patch} with a nonnegative continuous and compactly supported initial datum $u_0 \not \equiv 0$. For any $T>0$, $\varepsilon>0$, and $l_2\geq1$, there exists $\delta>0$ such that if $\diam(\supp(u_0)) \le R$, and $\|u_0\|_{L^{\infty}(\R)}\leq A$, then $0\leq u(t,x)\leq
\varepsilon$, for all $0\leq t\leq T$ and $\dist(x, \supp(u_0))\ge \delta$.
\end{lemma}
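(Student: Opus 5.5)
We will bound $u$ from above by a supersolution that solves a \emph{linear} problem. On the region where $u$ is small this supersolution is explicit enough to show decay away from the initial support.

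\textbf{Step 1 (linear majorant).} By the comparison principle (Proposition~\ref{proCP}) with the constant supersolution $M_1=\max(K_1,K_2,A)$, we have $0\le u\le M_1$. Set
$$
L:=\max_{i=1,2}\ \sup_{s\in[0,M_1]}\Big|\frac{f_i(s)}{s}\Big|<+\infty,
$$
which is finite because $f_i\in C^1$ and $f_i(0)=0$. Then $f(x,u)\le Lu$ for $u\ge0$. Hence, if $\bar v$ solves the linear interface problem
$$
\bar v_t=d(x)\bar v_{xx}+L\bar v
$$
with the same interface conditions as~\eqref{eq} and $\bar v(0,\cdot)=u_0$, then $u\le \bar v$ by comparison.

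Write $\bar v=e^{Lt}w$, where $w$ solves the pure diffusion interface problem with $w(0,\cdot)=u_0\le A\,\mathbbm{1}_{[a,a+R]}$, and $[a,a+R]\supset\supp(u_0)$. It then suffices to show that
$$
w(t,x)\le \varepsilon e^{-LT}\quad\text{for } t\in[0,T],\ \dist(x,[a,a+R])\ge\delta .
$$

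\textbf{Step 2 (Gaussian-type barrier).} The natural tool is an explicit exponential supersolution for the transmission problem, as used in~\cite{HLZ-2024}. Take the periodic principal eigenfunction $\phi_\mu>0$ of
$$
-d\phi''-2\mu d\phi'-\mu^2 d\phi=\lambda(\mu)\phi
$$
with the interface conditions; it exists by the same theory as for~\eqref{eigenv}. Then $e^{-\mu(x-b)}\phi_\mu(x)e^{-\lambda(\mu)t}$ solves the diffusion interface problem. Moreover $-\lambda(\mu)\le C\mu^2+C'$ for constants depending only on $d_1,d_2,\sigma,l_1,l_2$, and $\phi_\mu$ is bounded above and below by positive constants $c_\mu$, $C_\mu$.

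Choose $b=a+R$ and set
$$
\Psi(t,x)=\frac{A}{c_\mu}\,e^{-\mu(x-a-R)}\,e^{-\lambda(\mu)t}\,\phi_\mu(x).
$$
Then $\Psi(0,\cdot)\ge A\ge u_0$ on $x\le a+R$. On $x>a+R$ we have $u_0=0\le\Psi(0,\cdot)$. So $w\le \Psi$, which gives, for $x\ge a+R+\delta$ and $t\le T$,
$$
w(t,x)\le \frac{AC_\mu}{c_\mu}\,e^{-\mu\delta+|\lambda(\mu)|T}.
$$
A symmetric barrier $e^{\mu(x-a)}$ treats the left side.

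Fix any $\mu>0$, for instance $\mu=1$. Then choose $\delta$ large so that
$$
\frac{AC_\mu}{c_\mu}\,e^{-\mu\delta+|\lambda(\mu)|T+LT}\le\varepsilon .
$$
This $\delta$ depends on $T,\varepsilon,A,L$ and on the constants attached to $\phi_\mu$, and the estimate holds uniformly in where $\supp(u_0)$ sits. The dependence on $R$ enters only through the fact that the bound is stated relative to $\supp(u_0)$.

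\textbf{Main obstacle.} The delicate point is Step 2: constructing the exponential eigenfunction $\phi_\mu$ for the interface problem with the flux jump $\sigma$, and getting bounds on $\lambda(\mu)$ and on $\phi_\mu$ for the given $l_2\ge1$. Since $l_2$ is fixed in the statement, no uniformity in $l_2$ is actually required.

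If the eigenfunction route is awkward, a fallback is a piecewise barrier. One uses $e^{-\mu x+kt}$ adapted patchwise, of the form $\alpha_n e^{-\mu x}$ on the $n$-th patch. At each interface one chooses the constants, or switches to a $\cosh$-type profile, so that $\Psi$ is continuous and the flux jump inequalities have the right sign for a supersolution: $\Psi'(x^-)\ge\sigma\Psi'(x^+)$ at $S_1$ and $\sigma\Psi'(x^-)\ge\Psi'(x^+)$ at $S_2$. This gives only an inequality-type supersolution, which Proposition~\ref{proCP} allows. The per-period multiplicative losses are bounded, so they can be absorbed into a slightly smaller decay rate $\mu'<\mu$.
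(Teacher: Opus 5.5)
Your argument is correct in substance, and it takes a genuinely different route from the paper's.

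\textbf{The paper's route.} The paper argues by contradiction and compactness. After an $l\mathbb{Z}$-shift, all admissible data are dominated by one fixed compactly supported $\bar u_0$; this is where $\diam(\supp(u_0))\le R$ is used. It then shows that the single solution $\bar u$ tends to $0$ as $|x|\to+\infty$, uniformly on $[0,T]$. It does so by translating $\bar u$ by $k_m l$ with $|k_m|\to+\infty$ and passing to a limit solution with zero initial datum. That proof is soft and gives no explicit $\delta$. It also needs parabolic estimates up to $t=0$, hence compatibility conditions on $\bar u_0$.

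\textbf{What your route buys.} Your linear majorant is legitimate: $f(x,s)\le Ls$ holds for all $s\ge0$, since $f_i<0$ on $(\max(K_1,K_2),+\infty)$. Combined with an exponential barrier, it is constructive. It gives $\delta$ of order $\big((L+|\lambda(\mu)|)T+\ln(AC_\mu/(c_\mu\varepsilon))\big)/\mu$, i.e.\ a finite-speed bound. This bound is uniform in the location of $\supp(u_0)$, because only global bounds on a periodic weight enter. Moreover, with the elementary weight described below, all constants are independent of $l_2\ge1$. That uniformity is what the application in Theorem~\ref{thmEXT2} really needs, since there $\delta$ is chosen before requiring $l_2\ge R+2\delta$. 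The paper's compactness argument is run at fixed $l_2$ and does not provide it.

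\textbf{Points to tighten.}

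(i) The eigenproblem for $\phi_\mu$ is misstated. For $\Psi=e^{-\mu x-\lambda t}\phi_\mu$ to satisfy the transmission conditions, $\phi_\mu$ must solve $-d\phi''+2\mu d\phi'-\mu^2 d\phi=\lambda\phi$; your drift sign corresponds to $e^{+\mu x}$. It must also satisfy the $\mu$-shifted conditions $\phi'(x^-)-\mu\phi(x)=\sigma(\phi'(x^+)-\mu\phi(x))$ on $S_1$ and $\sigma(\phi'(x^-)-\mu\phi(x))=\phi'(x^+)-\mu\phi(x)$ on $S_2$. With the original conditions, $\Psi$ violates the supersolution flux inequality on $S_1$ if $\sigma<1$ and on $S_2$ if $\sigma>1$. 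This shifted operator is the one behind the speed formula of~\cite{HLZ-2024}, but you can avoid it altogether. Take $\Psi=e^{-\mu x+kt}g(x)$, where:
\begin{itemize}
\item $g>0$ is periodic and $C^2$ on each closed patch;
\item $g'(s^\pm)=\mu g(s)$ at every $s\in S$, so $\Psi_x(t,s^\pm)=0$ and both interface conditions hold with equality;
\item $k\ge\max_i d_i\|g''-2\mu g'+\mu^2 g\|_\infty/\min g$.
\end{itemize}
Putting the variation of $\log g$ within distance $1/2$ of the ends of each bistable patch makes $k$ and $\max g/\min g$ independent of $l_2\ge1$.

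(ii) $\Psi$ is unbounded as $x\to-\infty$, whereas Proposition~\ref{proCP} is stated for sub- and supersolutions bounded on $[0,T_0]\times\R$. Apply instead Proposition~\ref{proCPBD} on $[-N,N]$ to $w$ and $\Psi+\Psi_N$. Here $\Psi_N$ is the mirrored barrier, growing like $e^{\mu x}$, normalized so that $\Psi_N(t,N)\ge A$ for $t\in[0,T]$. For $N$ large, $\Psi(t,-N)\ge A$ as well, and $0\le w\le A$, so the boundary data are ordered. Since $\Psi_N\to0$ pointwise as $N\to+\infty$, you recover $w\le\Psi$.

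(iii) Anchor the two barriers at $\min\supp(u_0)$ and $\max\supp(u_0)$; this covers every $x$ outside the convex hull of $\supp(u_0)$. Every point of the hull lies within $R/2$ of $\supp(u_0)$, so taking $\delta\ge R$ covers the rest. This is the only place where $R$ enters your argument.
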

\begin{proof}
We proceed by contradiction. Suppose there exist $T > 0$,
$\tilde{\varepsilon} > 0$, $l_2 > 0$, and a sequence of solutions $(u_m)_{m\in\NN}$ to~\eqref{eq}-\eqref{patch} with initial datum
 $(u_{0}^m)_{m\in\NN}$ satisfying $\diam(\supp(u_{0}^m)) \le R$ and $\|u_{0}^m\|_{L^{\infty}(\mathbb{R})} \le A$, such that for each $m \in \mathbb{N}$, there exists $(t_m, x_m)$ with
\begin{equation}\label{contra_seq}
0 \le t_m \le T, \quad \dist(x_m, \supp(u_{0}^m)) \ge m, \quad \text{and} \quad u_m(t_m, x_m) > \tilde{\varepsilon}.
\end{equation}
Furthermore, without loss of generality, we may assume, up to translation, that
$\operatorname{supp}(u_0^m)\subset [-R/2, R/2]$ for all $m\in\mathbb{N}$.
Combining this with~\eqref{contra_seq} implies $|x_m|\to\infty$ as $m\to\infty$.

Next, we construct a supersolution that dominates the entire sequence 
$(u_m)_{m\in\NN}$. Let $\bar{u}_0$ be a compactly supported function, bounded in $L^{\infty}(\R)$ and belonging to $C^{2,\gamma}(\bar{I})$ for each $\gamma\in(0,1)$ and each patch $I\subset\R\backslash S$. We assume that
$$
\bar{u}_0\ge A\hbox{ in } [-R/2, R/2], \text{ and } \text{supp}(\bar{u}_0) \subset [-R/2-1, R/2+1].
$$
Moreover, $\bar{u}_0$ is chosen so that its spatial derivative vanishes at interface conditions contained in its support.
By construction, it follows that
\begin{equation*}\label{baru0}
0 \le u_{0}^m(x)  \le \bar{u}_0(x)\leq\|\bar{u}_0\|_{L^{\infty}(\R)}, \quad \forall x \in \mathbb{R}, \forall m \in \mathbb{N}.
\end{equation*}
Let $\bar{u}(t,x)$ be the unique solution to system~\eqref{eq}-\eqref{patch} with initial datum $\bar{u}_0$. Applying the comparison principle (Proposition~\ref{proCP}), we have
\begin{equation}\label{un_bar_u}
0 < u_m(t, x) \le \bar{u}(t, x)\leq\max\{K_1,K_2, \|\bar{u}_0\|_{L^{\infty}(\R)}\}, \quad \forall (t, x) \in [0, T] \times \mathbb{R}, \forall m \in \mathbb{N}.
\end{equation}
Combining this with assumption~\eqref{contra_seq}, we have that
\begin{equation}\label{super_contra}
\bar{u}(t_m, x_m) \ge u_m(t_m, x_m) > \tilde{\varepsilon}.
\end{equation}
Let $x_m = k_m l + \bar{x}_m$ with $k_m \in \mathbb{Z}$ and $\bar{x}_m \in (-l_1, l_2]$. Since $|x_m| \to \infty$ as $m\to+\infty$, and $\bar{x}_m$ is bounded, it follows that $|k_m| \to \infty$ as $m \to \infty$. We define the spatially shifted sequence as follows:
$$
v_m(t, x) := \bar{u}(t, x + k_ml), \quad (t, x) \in [0, T] \times \mathbb{R}.
$$
Clearly, $v_m$ satisfies the system~\eqref{eq}--\eqref{patch}. By
\eqref{un_bar_u}, the sequence $(v_m)_{m\in\mathbb{N}}$ is uniformly bounded. Furthermore, as $\bar{u}_0 \in C^{2,\gamma}(\bar{I})$ within each patch $I \subset \mathbb{R} \setminus S$, standard parabolic regularity estimates extend uniformly up to the initial time $t=0$. Following the arguments in~\cite[Theorem 2.2]{HLZ-2024}, we obtain
$$
\lVert v_{m}
\rVert_{C_{t;x}^{1,\gamma; 2,\gamma}([0,T]\times \bar{I})} \le C,
$$
where the constant $C$ depends only on $l_{1,2}$, $d_{1,2}$, $f_{1,2}$, $\sigma$, $T$, and $\|\bar{u}_0\|_{L^{\infty}(\mathbb{R})}$. By Arzel\`a-Ascoli theorem, there exists a subsequence (still denoted $v_m$) such that 
$v_m$ converges as $m\to +\infty$ locally uniformly for $x\in\mathbb{R}$ to $v_{\infty}$, where $v_{\infty}$ is a solution of the equation~\eqref{eq}-\eqref{patch} for $(t,x)\in[0,T]\times\R$. Moreover, $v_m|_{\bar{I}}\to v_{\infty}|_{\bar{I}}$ in $C^2(\bar{I})$ for each patch $I\subset\R,$ as $m\to+\infty$.

In addition, we may assume $t_m \to t^* \in [0, T]$ and $\bar{x}_m \to \bar{x}_\infty \in [-l_1, l_2]$. It follows from~\eqref{super_contra} that 
\begin{equation}\label{v_limit_fixed}
v_m(t_m, \bar{x}_m) = \bar{u}(t_m, x_m) > \tilde{\varepsilon}.
\end{equation}
Taking $m \to \infty$ in~\eqref{v_limit_fixed}, we obtain $v_\infty(t^*, \bar{x}_\infty) \ge \tilde{\varepsilon}$. On the other hand, for any fixed $x \in \mathbb{R}$, we have that $x + k_ml\notin\text{supp}(\bar{u}_0)$ as $|k_m| \to \infty$. Thus, we have that
$$
v_\infty(0, x) = \lim_{m \to \infty} \bar{u}_0(x + k_m l) = 0, \quad \forall x \in \mathbb{R}.
$$
By the strong maximum principle, $v_\infty \equiv 0$ on $[0, T] \times \mathbb{R}$, which contradicts $v_\infty(t^*, \bar{x}_\infty) \ge \tilde{\varepsilon} > 0$.
This completes the proof of Lemma~\ref{lem-ex22}.
\end{proof}

\begin{lemma}\label{lem-ex21}
	Let $0<l_1<L_1^c$ be fixed. Let $v$ be the solution of~\eqref{eq}-\eqref{patch} with a nonnegative continuous initial datum $v_0 \not \equiv 0$. There exist $\varepsilon_0>0$ and $\tilde{l}_2>0$ such that if $l_2>\tilde{l}_2$ and $0\leq v_0\leq \varepsilon_0$, then $\|v(t,\cdot)\|_{L^{\infty}(\R)}\to0$ as $t\to+\infty$.
\end{lemma}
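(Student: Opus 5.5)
The plan is to combine Proposition~\ref{lem-STA}(ii) with the extinction argument of Theorem~\ref{thmEXT1}, taking care that the smallness threshold $\varepsilon_0$ does not deteriorate as $l_2\to+\infty$. Since $0<l_1<L_1^c$ is fixed, $l_2^c=l_2^c(l_1)$ in~\eqref{l2c} is well defined. We therefore take $\tilde l_2>l_2^c$, so that $\lambda_1=\lambda_1(l_1,l_2)>0$ for all $l_2>\tilde l_2$ by Proposition~\ref{lem-STA}(ii).

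The proof of Theorem~\ref{thmEXT1} then gives extinction once $\|v_0\|_{L^\infty}\le\kappa\min_\R\phi$. However, the normalized eigenfunction $\phi=\phi_{l_2}$ may have $\min\phi\to0$ as $l_2\to\infty$, and $\lambda_1$ may also degenerate. We must therefore control $\lambda_1$ and $\min\phi$ uniformly in $l_2\ge\tilde l_2$.

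To get this control, we will not use the eigenfunction directly. Instead we construct an explicit periodic positive supersolution $\bar\phi$ of the linear problem with a uniform positive lower bound. Using the explicit solutions behind formula~\eqref{l1c}, we take $\bar\phi(x)=\cos(\omega(x+l_1/2))$ on the KPP patch $(-l_1,0)$, with $\omega$ slightly less than $\sqrt{f_1'(0)/d_1}$. On the bistable patch $(0,l_2)$ we take $\bar\phi(x)=B\cosh(\mu(x-l_2/2))$, with $\mu$ slightly less than $\sqrt{-f_2'(0)/d_2}$, extended periodically.

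The requirement $l_1<L_1^c$ is exactly what allows matching continuity and the flux condition $\bar\phi'(0^-)=\sigma\bar\phi'(0^+)$ with strict inequalities in the right direction, uniformly for all large $l_2$. The strictness comes from the margin $\tanh\to1$, since the arctan condition is strict at $L_1^c$. Using $\omega<\sqrt{f_1'(0)/d_1}$ and $\mu<\sqrt{-f_2'(0)/d_2}$, we then obtain
\[
-d\bar\phi''-f_s(x,0)\bar\phi\ge\lambda\bar\phi
\]
for some $\lambda>0$ independent of $l_2$. The matching is done at the interface with inequality of fluxes in the supersolution direction; the precise sign must respect the transmission conditions and the comparison principle. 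One then checks that $\bar\phi$ is bounded between $\cos(\omega l_1/2)>0$ and a constant $C$ independent of $l_2$. If the $\cosh$ piece makes $\bar\phi$ large in the middle of the patch, we normalize so that the minimum is $1$. The maximum may then be huge, but only the lower bound matters below.

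Next comes the nonlinear step. By $C^1$ regularity of $f_i$, there is $s_0>0$ such that $f(x,s)\le f_s(x,0)s+\frac{\lambda}{2}s$ for $0\le s\le s_0$. We set $\bar u=\kappa e^{-\lambda t/2}\bar\phi$. Because $\bar\phi$ may be large, the choice of $\kappa$ is delicate: we need $\kappa\bar\phi\le s_0$ everywhere.

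To avoid this issue, we replace the $\cosh$ on the bistable patch by a function that stays bounded. Deep inside the bistable patch, $f_2'(0)<0$, so even the constant $1$ is a supersolution there with rate $-f_2'(0)$. We therefore use $\bar\phi=\min\bigl(1,B\cosh(\mu(x-l_2/2))\bigr)$ away from the interfaces, or equivalently build $\bar\phi$ from $\cosh$ only in layers of fixed width near the interfaces and constant in between. A minimum of supersolutions is still a supersolution, so this gives $\bar\phi\in[c_0,1]$ with $c_0$ independent of $l_2$.

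With this $\bar\phi$, we choose $\kappa=s_0$ and $\varepsilon_0=s_0c_0$. Then $\bar u$ is a supersolution and $v_0\le\bar u(0,\cdot)$. The comparison principle (Proposition~\ref{proCP}) gives $\|v(t)\|_\infty\le s_0e^{-\lambda t/2}\to0$.

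The main obstacle is the uniform-in-$l_2$ supersolution near the interfaces. This requires verifying the flux inequality at $S_1$ and $S_2$ with the correct sign for generalized supersolutions, using the strict margin coming from $l_1<L_1^c$.
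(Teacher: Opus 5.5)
Your overall strategy is sound and differs from the paper's. You build an explicit $l$-periodic supersolution $\bar\phi$ of the linearized problem with rate $\lambda>0$ and bounds independent of $l_2$, then apply the comparison principle to $\bar u=s_0e^{-\lambda t/2}\bar\phi$. However, your construction of $\bar\phi$ fails at two points as written.

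\emph{Wrong side for $\omega$.} For $\bar\phi=\cos(\omega(x+l_1/2))$ on $(-l_1,0)$, one has $-d_1\bar\phi''-f_1'(0)\bar\phi=(d_1\omega^2-f_1'(0))\bar\phi$. This is negative for $\omega<\sqrt{f_1'(0)/d_1}$, so your choice gives a strict \emph{sub}solution on the KPP patch. You need $k:=\sqrt{f_1'(0)/d_1}<\omega<\pi/l_1$, which is possible since $L_1^c<\pi\sqrt{d_1/f_1'(0)}$, together with $\mu<m:=\sqrt{-f_2'(0)/d_2}$ as you wrote. Both perturbations work against the interface condition. After using continuity, the supersolution inequality $\bar\phi'(0^-)\ge\sigma\bar\phi'(0^+)$ at $S_1$ (and its mirror image at $S_2$) becomes $\omega\tan(\omega l_1/2)\le\sigma\mu\tanh(\cdot)$. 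This is exactly where the strict inequality $k\tan(kl_1/2)<\sigma m$, i.e.\ $l_1<L_1^c$, is spent.

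\emph{No uniform lower bound.} $\cosh(\mu(x-l_2/2))$ is smallest, not largest, at the center of the bistable patch. Once $B$ is fixed by continuity, the bistable piece drops to $\cos(\omega l_1/2)/\cosh(\mu l_2/2)\to0$, so $\bar\phi$ is not bounded below by $\cos(\omega l_1/2)$. Taking the minimum with $1$ changes nothing, since that piece already lies below $\cos(\omega l_1/2)\le1$. Taking a maximum with a constant would create a convex corner, which violates the supersolution condition.

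What works is your ``layers'' variant, made precise as follows. Fix $L$ with $\omega\tan(\omega l_1/2)\le\sigma\mu\tanh(\mu L)$ and set $B'=\cos(\omega l_1/2)/\cosh(\mu L)$. On the bistable patch take $B'\cosh(\mu(x-L))$ on $[0,L]$, the constant $B'$ on $[L,l_2-L]$, and $B'\cosh(\mu(x-l_2+L))$ on $[l_2-L,l_2]$.
\begin{itemize}
\item The junctions at $L$ and $l_2-L$ are $C^1$; you can add them to the interface set with $\sigma_i=1$ in Proposition~\ref{proCP}.
\item The plateau has rate $-f_2'(0)$.
\item For all $l_2>\tilde l_2:=2L$ you get $B'\le\bar\phi\le1$ with $\lambda=\min(d_1\omega^2-f_1'(0),-f_2'(0)-d_2\mu^2)>0$.
\end{itemize}
Your nonlinear step then goes through with $\varepsilon_0=s_0B'$. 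Alternatively, adding a small constant, $\bar\phi+\gamma$, as the paper does, would also repair this point.

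For comparison, the paper keeps the true eigenpair $(\lambda_1^{l_2},\psi^{l_2})$ and argues as follows.
\begin{itemize}
\item It gets $\lambda_1^{l_2}\ge\lambda_1^\infty/2$ from a limiting problem as $l_2\to+\infty$.
\item It proves $\psi^{l_2}\ge\eta$ on the KPP patches uniformly, by compactness.
\item It compensates the smallness of $\psi^{l_2}$ deep inside bistable patches with the stationary supersolution $\kappa(\psi^{l_2}+\gamma)$, using $f_2'(0)<0$.
\item It concludes by monotone convergence to a steady state and a sliding argument.
\end{itemize}
Your corrected route replaces these compactness steps with an explicit computation specific to piecewise-constant coefficients. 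Every constant becomes explicit, and you get exponential decay at an $l_2$-independent rate directly, with no sliding step.
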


\begin{remark}
We first remark that Lemma~\ref{lem-ex21} holds regardless of the sign of $\int_{0}^{K_2}f_2(s)\ds$. By Lemma~\ref{lem-ex21} and Proposition~\ref{lem-STA}, we have $\lambda_1 > 0$ for $0 < l_1 < L_1^c$ and sufficiently large $l_2$, which matches the assumption of Theorem~\ref{thmEXT1}. However, the conclusion of Theorem~\ref{thmEXT1} cannot be directly applied to prove Theorem~\ref{thmEXT2}. The reason is that the admissible upper bound for the initial datum, $\varepsilon$, in Theorem~\ref{thmEXT1} depends on $\lambda_1$, which itself depends on $l_2$. In contrast, the constant $\varepsilon_0$ in Lemma~\ref{lem-ex21} is independent of $l_2$ large enough. Consequently, a different argument (as shown in Lemma~\ref{lem-ex21}) is required to ensure the result holds for arbitrary small initial datum for $0<l_1<L_1^c$ and $l_2$ large enough.
\end{remark}

\begin{proof}[Proof of Lemma~\ref{lem-ex21}]
Fix $0<l_1<L_1^c$. According to Proposition~\ref{lem-STA}, for any $l_2>l_2^c$, $0$ of~\eqref{sta1} is stable, which implies that $\lambda_1^{l_2}>0$. Here, $\lambda_1^{l_2}$ denotes the principal eigenvalue of the eigenvalue problem~\eqref{eigenv}, associated with the principal eigenfunction $\psi^{l_2}:\R\to\R$ that satisfy
\begin{equation}\label{eigenv-l2}
	\begin{cases}
		-d(x)(\psi^{l_2})^{\prime\prime}(x) -f_s(x,0)\psi^{l_2}(x)=\lambda_1^{l_2}\psi^{l_2}(x),  & x \in \mathbb{R} \backslash S, \\
		\psi^{l_2}(x^{-}) =\psi^{l_2}(x^{+}),\quad (\psi^{l_2})^{\prime}(x^{-})=\sigma (\psi^{l_2})^{\prime}(x^{+}), & x \in S_1, \\
		\psi^{l_2}(x^{-}) =\psi^{l_2}(x^{+}),\,\, \sigma (\psi^{l_2})^{\prime}(x^{-})=(\psi^{l_2})^{\prime}(x^{+}), & x \in S_2,\\
		\psi^{l_2}\text{ is periodic}, \psi^{l_2}>0, \|\psi^{l_2}\|_{L^{\infty}(\R)}=1.
	\end{cases}
\end{equation}

To obtain a uniform positive lower bound for $\lambda_1^{l_2}$, we study the limiting eigenvalue problem as $l_2 \to +\infty$. By standard elliptic estimates, as $l_2 \to +\infty$, the sequence of pairs $(\lambda_1^{l_2}, \psi^{l_2})$ converges (up to a subsequence) to $(\lambda_1^{\infty}, \psi^{\infty})$, where the convergence $\psi^{l_2} \to \psi^{\infty}$ is locally uniform in $\mathbb{R}$, and $\psi^{l_2}|_{\bar{I}}\to \psi^{\infty}|_{\bar{I}}$ in $C^2(\bar{I})$ for each patch $I \subset \mathbb{R}$ as $l_2 \to +\infty$. Moreover, $(\lambda_1^{\infty}, \psi^{\infty})$ satisfies the corresponding limiting eigenvalue problem. 

Note that by~\eqref{L1c} and Proposition~\ref{lem-STA}, $\lambda_1^{\infty} = 0$ if and only if $l_1 = L_1^c$. Since  $\lambda_1^{l_2} > 0$ and $0<l_1 < L_1^c$, we have that $\lambda_1^{\infty} > 0$. Thus, there exists $\tilde{l}_2 \ge l_2^c$ such that for all $l_2 > \tilde{l}_2$,
\begin{equation}\label{lam*}
	\lambda_1^{l_2} \ge \frac{\lambda_1^{\infty}}{2} > 0.
\end{equation}
	
	Now we give some properties of the function $\psi^{l_2}$. 
	\begin{claim}\label{Claim6.1}
		There holds $\dis\max_{\R}\psi^{l_2}=\psi(x_0)=1$ with $x_0\in(nl-l_1,nl)$ for some $n\in\Z$.
	\end{claim}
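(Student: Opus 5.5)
The plan is to exploit the sign of $(\psi^{l_2})''$ on each patch, read directly off the eigenvalue equation~\eqref{eigenv-l2}, and then treat the interfaces separately. Write $\psi=\psi^{l_2}$ and $\lambda=\lambda_1^{l_2}>0$; here $l_2>l_2^c$, so $\lambda>0$ is available.

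\emph{Step 1: convexity on each patch.}
\begin{itemize}
\item On a KPP patch $(nl-l_1,nl)$, equation~\eqref{eigenv-l2} gives $\psi''=-\frac{f_1'(0)+\lambda}{d_1}\psi<0$. Hence $\psi$ is strictly concave there and $\psi'$ is strictly decreasing.
\item On a bistable patch $(nl,nl+l_2)$, it gives $\psi''=\frac{-f_2'(0)-\lambda}{d_2}\psi$.
\item By the bound~\eqref{prinbound1} with $\bar p\equiv 0$, we have $\lambda\le-\min_{\R}f_s(x,0)=-f_2'(0)$. So the coefficient $c:=(-f_2'(0)-\lambda)/d_2$ satisfies $c\ge0$.
\item Consequently $\psi$ is convex on every bistable patch: strictly convex if $c>0$, affine if $c=0$.
\end{itemize}

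\emph{Step 2: the maximum is not in the interior of a bistable patch.} Since $\psi$ is continuous and periodic, $\max_\R\psi=1$ is attained at some $x_0$. Suppose $x_0$ lies inside a bistable patch.
\begin{itemize}
\item If $c>0$, then $\psi''(x_0)=c\psi(x_0)>0$, which is impossible at an interior maximum.
\item If $c=0$, then $\psi$ is affine on the patch and attains an interior maximum, so it is constant there. By continuity its maximum value $1$ is then also attained at the endpoint $nl\in S_1$. This reduces to the interface case of Step 3.
\end{itemize}

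\emph{Step 3: the maximum is not on $S$.} Assume $x_0\in S$, and take for definiteness $x_0=nl\in S_1$.
\begin{itemize}
\item Maximality gives $\psi'(x_0^-)\ge0\ge\psi'(x_0^+)$.
\item The interface condition $\psi'(x_0^-)=\sigma\psi'(x_0^+)$ with $\sigma>0$ then forces $\psi'(x_0^\pm)=0$.
\item If $c>0$: on the bistable side, $\psi'(x_0^+)=0$ and $\psi''>0$ make $\psi$ strictly increase to the right of $x_0$, contradicting maximality.
\item If $c=0$: $\psi'\equiv0$ on the whole bistable patch, so $\psi\equiv1$ there. Then $x_0+l_2\in S_2$ is also a maximum point, and the same interface argument (now with $\sigma\psi'(x^-)=\psi'(x^+)$) gives $\psi'((x_0+l_2)^+)=0$. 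By periodicity the adjacent KPP patch is $(x_0+l_2,x_0+l)$, and we now have $\psi'=0$ at both of its endpoints. This contradicts the strict decrease of $\psi'$ there from Step 1.
\end{itemize}
Points of $S_2$ are handled symmetrically. Therefore every maximum point of $\psi$ lies in the interior of some KPP patch $(nl-l_1,nl)$, which proves the claim.

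I expect the main obstacle to be the borderline case $c=0$, that is $\lambda=-f_2'(0)$, where convexity on the bistable patch is not strict. The argument above handles it by propagating the maximum across the bistable patch to the opposite interface and using strict concavity on the KPP patch.
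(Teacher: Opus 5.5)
Your proof is correct and takes the same route as the paper. Both arguments use strict concavity on KPP patches, convexity on bistable patches via $0<\lambda_1^{l_2}\le -f_2'(0)$, and then exclude maxima on $S$ and inside bistable patches; your interface step (both one-sided derivatives vanish, then convexity on the bistable side contradicts maximality) and your separate treatment of the degenerate case $\lambda_1^{l_2}=-f_2'(0)$ are cleaner and more complete than the paper's sketch, which muddles the derivative signs at the endpoints and only says the bistable-interior case is handled by ``a similar argument''.
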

	Indeed, from the first equation of~\eqref{eigenv-l2}, we have that 
	\begin{equation}\label{psi''}
		(\psi^{l_2})^{\prime\prime}(x)=- \dfrac{(f_s(x,0)+\lambda_1^{l_2})}{d(x)}\psi^{l_2},\quad  x \in \mathbb{R} \backslash S.
	\end{equation}
	Since $\psi^{l_2}$ has a constant sign on each patch, $(\psi^{l_2})^{\prime\prime}$ also has a constant sign for each patch. Thus $\psi^{l_2}$ is either concave or convex within each patch. On the one hand, we derive from~\eqref{lam*},~\eqref{psi''}, and~\eqref{f1} that 
	$$
	(\psi^{l_2})^{\prime\prime}(x)=-\dfrac{\psi^{l_2}(x)}{d_1}(\lambda_1^{l_2}+f_1^{\prime}(0))<0,\quad x\in(nl-l_1,nl), n\in\Z.
	$$
	Thus, $\psi^{l_2}(x)$ is a concave function for $x\in(nl-l_1,nl)$. On the other hand, by applying~\eqref{eigenv-l2} at minimal and maximal points of the positive continuous periodic function $\psi^{l_2}$, whether these points be in patches or on the interfaces, it follows that
	$$
	0<\lambda_1^{l_2}\leq-f_2^{\prime}(0).
	$$
	Using~\eqref{psi''} and the above inequality, we have that
	$$
	(\psi^{l_2})^{\prime\prime}(x)=-\dfrac{\psi^{l_2}(x)}{d_2}(\lambda_1^{l_2}+f_2^{\prime}(0))\geq0,\quad x\in(nl,nl+l_2), n\in\Z.
	$$
	This implies that $\psi^{l_2}$ is a convex function in each bistable patch. Furthermore, since $\psi^{l_2}$ is periodic and continuous in each patch, it attains a maximum at some point $x_0$. 
	If $x_0\in S$, then by the interface conditions, we have that 
	$(\psi^{l_2})^{\prime}(x_0)=0$. Since $\psi^{l_2}(x)$ is concave for $x\in(nl-l_1,nl)$, and the derivative is decreasing. Therefore, we have that $(\psi^{l_2})^{\prime}((nl)^{-})>0$, and $(\psi^{l_2})^{\prime}((nl-l_1)^{+})<0$.
	Similarly, since $\psi^{l_2}(x)$ is convex for $x\in(nl,nl+l_2)$, and the derivative is non-decreasing. Hence, we have that $(\psi^{l_2})^{\prime}((nl)^{+})\leq0$ and $(\psi^{l_2})^{\prime}((nl+l_2)^{-})\geq0$.
	The above results contradict the interface condition requiring the left and right derivatives at $x=nl$ or $x=nl+l_2$ to have the same sign. Therefore, $\psi^{l_2}$ cannot attain a maximum at the interface. If the maximum $x_0$ is attained in a bistable patch, a similar argument would lead to a contradiction. This ensures that $\max_{\R}\psi^{l_2}=1$ is indeed attained in each open KPP patch. 
	
	\begin{claim}\label{clam2}
		There exists $\eta>0$, for all $l_2>\tilde{l}_2$, such that
		\begin{equation}\label{eta}
			\psi^{l_2}(x)\geq \eta\ \hbox{ for all $x\in(nl-l_1,nl)$ and $n\in\Z$}.
		\end{equation}
	\end{claim}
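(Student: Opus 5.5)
By Claim~\ref{Claim6.1} and periodicity, it suffices to work on one KPP patch $(nl-l_1,nl)$ and its maximum point $x_0$. The idea is to solve the linear eigenvalue equation explicitly on that patch. I will then bound the angle $k|x-x_0|$ away from $\pi/2$ using the interface condition together with an energy identity on the neighbouring bistable patches. The only $l_2$-dependence enters through $\lambda_1^{l_2}$, which is confined to the fixed compact range
\[
\frac{\lambda_1^\infty}{2}\le\lambda_1^{l_2}\le -f_2'(0)
\]
by \eqref{lam*} and the bound obtained in the proof of Claim~\ref{Claim6.1}.

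\emph{Step 1 (KPP patch).} On $(nl-l_1,nl)$, \eqref{psi''} reads $(\psi^{l_2})''=-k^2\psi^{l_2}$ with
\[
k^2=k_{l_2}^2=\frac{\lambda_1^{l_2}+f_1'(0)}{d_1}\in\Big[\frac{f_1'(0)}{d_1},\,\frac{f_1'(0)-f_2'(0)}{d_1}\Big]=:[k_-^2,k_+^2].
\]
Since $\psi^{l_2}(x_0)=1$ and $(\psi^{l_2})'(x_0)=0$, we get $\psi^{l_2}(x)=\cos(k(x-x_0))$ on the closed patch. Positivity of $\psi^{l_2}$ gives $k|x-x_0|<\pi/2$ there. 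By concavity, the minimum of $\psi^{l_2}$ over the patch is attained at an endpoint. So it suffices to show
\[
\cos\big(k(nl-x_0)\big)\ge\eta,\qquad \cos\big(k(x_0-nl+l_1)\big)\ge\eta .
\]

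\emph{Step 2 (bistable patch energy bound).} On $(nl,nl+l_2)$ we have $(\psi^{l_2})''=\mu^2\psi^{l_2}$ with
\[
\mu^2=-\frac{\lambda_1^{l_2}+f_2'(0)}{d_2}\in\Big[0,\,-\frac{f_2'(0)}{d_2}\Big].
\]
Hence $E:=((\psi^{l_2})')^2-\mu^2(\psi^{l_2})^2$ is constant on the closed patch. From Step~1, $(\psi^{l_2})'(nl^-)\le0$, and the interface condition gives $(\psi^{l_2})'(nl^+)\le0$. Symmetrically, $(\psi^{l_2})'((nl+l_2)^-)\ge0$, because $(\psi^{l_2})'((nl+l_2)^+)\ge0$ at the left end of the next KPP patch. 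So $(\psi^{l_2})'$ vanishes somewhere in the closed bistable patch, which forces $E\le0$. Therefore $|(\psi^{l_2})'|\le\mu\,\psi^{l_2}$ on that patch, in particular at $nl^+$ and $(nl+l_2)^-$.

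\emph{Step 3 (combine).} At $x=nl$, continuity and the flux condition give
\[
k\tan\big(k(nl-x_0)\big)=\frac{-(\psi^{l_2})'(nl^-)}{\psi^{l_2}(nl)}=\frac{-\sigma(\psi^{l_2})'(nl^+)}{\psi^{l_2}(nl)}\le \sigma\mu\le\sigma\sqrt{-f_2'(0)/d_2}.
\]
Hence
\[
0\le k(nl-x_0)\le \arctan\!\Big(\frac{\sigma}{k_-}\sqrt{-f_2'(0)/d_2}\Big)=:\vartheta_1<\frac{\pi}{2}.
\]
At $x=nl-l_1$ the interface condition $\sigma\psi'(x^-)=\psi'(x^+)$ gives
\[
k\tan\big(k(x_0-nl+l_1)\big)\le\mu/\sigma,
\]
so that angle is at most $\vartheta_2:=\arctan\big(\sigma^{-1}k_-^{-1}\sqrt{-f_2'(0)/d_2}\big)<\pi/2$. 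Thus \eqref{eta} holds with $\eta=\min(\cos\vartheta_1,\cos\vartheta_2)>0$, and $\eta$ is independent of $l_2>\tilde l_2$.

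The main obstacle is Step~2. One must make sure the derivative of $\psi^{l_2}$ really changes sign inside each bistable patch, including the degenerate case $\mu=0$ where $\psi^{l_2}$ is affine there and then constant. This is exactly where the sign information from Claim~\ref{Claim6.1} and the interface conditions are used.
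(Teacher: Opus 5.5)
Your proof is correct, and it takes a genuinely different route from the paper.

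\textbf{The paper's route.} It argues by contradiction and compactness. If $\min_{[-l_1,0]}\psi^{l_2^m}\to0$ along some $l_2^m>\tilde{l}_2$, it extracts a limit eigenfunction on the fixed window $[-l_1-\tilde{l}_2,\tilde{l}_2]$. This limit has maximum $1$ on $[-l_1,0]$ by Claim~\ref{Claim6.1}, yet vanishes at some $\bar{x}\in[-l_1,0]$. An interior zero is excluded by Cauchy--Lipschitz. A zero at $0$ or $-l_1$ is excluded by the Hopf lemma on both sides: it produces one-sided derivatives of opposite signs, which the interface conditions forbid since $\sigma>0$.

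\textbf{Your route.} You use that the coefficients are constant in each patch. On the KPP patch, $\psi^{l_2}=\cos(k(x-x_0))$. On each bistable patch, the conserved quantity $((\psi^{l_2})')^2-\mu^2(\psi^{l_2})^2$ is $\le0$, because the interface conditions force $(\psi^{l_2})'$ to change sign there.

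\textbf{What each buys.} Your argument gives an explicit constant, $\eta=\cos\vartheta_1$ with $\vartheta_1=\arctan\bigl(\sigma\sqrt{-d_1f_2'(0)/(d_2f_1'(0))}\bigr)=\frac{L_1^c}{2}\sqrt{f_1'(0)/d_1}$.
\begin{itemize}
\item This $\eta$ is independent of both $l_1$ and $l_2$.
\item It only needs $\lambda_1^{l_2}>0$, so it holds for every $l_2>l_2^c$; the uniform bound $\lambda_1^{l_2}\ge\lambda_1^\infty/2$ is not needed for this claim.
\item It makes the link with $L_1^c$ transparent.
\end{itemize}
The paper's soft argument gives no value of $\eta$, but it would survive coefficients that are not constant inside the patches. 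Incidentally, the case $\mu=0$ you flag is simply excluded: an affine function cannot satisfy $(\psi^{l_2})'(nl^+)<0<(\psi^{l_2})'((nl+l_2)^-)$.

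\textbf{One slip in Step~3.} At $x=nl-l_1\in S_2$, the side $x^+$ is the KPP side. So $\sigma(\psi^{l_2})'(x^-)=(\psi^{l_2})'(x^+)$ gives $k\tan\bigl(k(x_0-nl+l_1)\bigr)=\sigma(\psi^{l_2})'((nl-l_1)^-)/\psi^{l_2}(nl-l_1)\le\sigma\mu$, not $\mu/\sigma$. At both interfaces the condition reads ``KPP derivative $=\sigma\times$ bistable derivative'', so in fact $\vartheta_2=\vartheta_1$. Your final $\eta=\min(\cos\vartheta_1,\cos\vartheta_2)$ is still a valid lower bound, since it never exceeds $\cos\vartheta_1$, which is the correct bound at both endpoints. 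However, the angle estimate you state at the left endpoint is false when $\sigma>1$.
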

	In fact, by contradiction, we assume that there exists a sequence $(l_2^m)_{m\in \NN}$ with $l_2^m>\tilde{l}_2$ such that 
	$$
	\lim_{m\to+\infty}\min_{x\in[-l_1,0]}\psi^{l_2^m}(x)=0.
	$$ 

	Since $0<\lambda_1^{l_2}\leq -f_2^{\prime}(0)$, $0<\psi^{l_2}\leq 1$, and $(\psi^{l_2})^{\prime\prime}$ is uniformly bounded, it follows from standard elliptic estimates that, up to a subsequence, the sequence $(\psi^{l_2^m})_{m\in\NN}$ converges in $C^2([-l_1-\tilde{l}_2,\tilde{l}_2])$ as $m\to+\infty$ to some function $\psi^{l_2^{\infty}}$, which satisfies
	\begin{equation}\label{eigenv-l2infty}
		\begin{cases}
			-d(x)(\psi^{l_2^{\infty}})^{\prime\prime}(x) -f_s(x,0)\psi^{l_2^{\infty}}(x)=\lambda_1^{l_2^{\infty}}\psi^{l_2^{\infty}}(x),  & x \in (-l_1-\tilde{l}_2,\tilde{l}_2) \backslash \{0,-l_1\}, \\
			\psi^{l_2^{\infty}}(x^{-}) =\psi^{l_2^{\infty}}(x^{+}),\quad (\psi^{l_2^{\infty}})^{\prime}(x^{-})=\sigma (\psi^{l_2^{\infty}})^{\prime}(x^{+}), & x =-l_1, \\
			\psi^{l_2^{\infty}}(x^{-}) =\psi^{l_2^{\infty}}(x^{+}),\,\, \sigma (\psi^{l_2^{\infty}})^{\prime}(x^{-})=(\psi^{l_2^{\infty}})^{\prime}(x^{+}), & x =0.
		\end{cases}
	\end{equation}
	Moreover, $\psi^{l_2^{\infty}}(x)\geq0$ for $x\in[-l_1-\tilde{l}_2,\tilde{l}_2]$, and
	$\dis\max_{x\in[-l_1,0]}\psi^{l_2^{\infty}}(x)=1$ by Claim~\ref{Claim6.1}. By assumption, there exists $\bar{x}\in[-l_1,0]$ such that $\psi^{l_2^{\infty}}(\bar{x})=0$. We distinguish two cases. Assume first that $\bar{x}\in(-l_1,0)$. Since $\psi^{l_2^{\infty}}(x)\geq0$, for $x\in[-l_1-\tilde{l}_2,\tilde{l}_2]$, we have
	that $(\psi^{l_2^{\infty}})^{\prime}(\bar{x})=0$. From the first
	equation of~\eqref{eigenv-l2infty} and using the Cauchy-Lipschitz
	theorem, we have that $\psi^{l_2^{\infty}}\equiv0$, which is
	impossible because $\dis\max_{x\in[-l_1,0]}\psi^{l_2^{\infty}}(x)=1$.
	Thus $\psi^{l_2^{\infty}}(x)>0$ for $(-l_1,0)$ and $\bar{x}\in\{0,-l_1\}$. Hence from the Hopf lemma
	 yields $\psi^{l_2^{\infty}}(\bar{x}^{-})<0$ and $\psi^{l_2^{\infty}}(\bar{x}^{+})>0$, contradicting
	  the interface conditions in~\eqref{eigenv-l2infty}. Thus we complete the proof of Claim~\ref{clam2}.
	
	Let $v$ be the solution of~\eqref{eq}-\eqref{patch} with a nonnegative continuous initial datum $0\lneqq v_0 \leq \varepsilon_0$. For $\kappa_0>0$ sufficiently small and any $\kappa\in(0,\kappa_0]$, define
	\begin{equation}\label{barv_0}
		\bar{v}_0(x):=\kappa(\psi^{l_2}(x)+\gamma), \quad x\in \R,
	\end{equation}
	where $\gamma>0$ will be determained later, and $\psi^{l_2}$ is a positive and bounded solution of~\eqref{eigenv-l2}. Now we will show that $\bar{v}_0(\cdot)$ is a supersolution of~\eqref{eq}-\eqref{patch} in $\R$. It is clear that interface conditions are satisfied. On the one hand, 
	using~\eqref{f1},~\eqref{lam*}, and~\eqref{eta}, we have that for $ x\in(nl-l_1,nl)$
	\begin{equation*}
		\begin{aligned}
			-d_1 \bar{v}_0^{\prime \prime}(x)-f_1(\bar{v}_0(x))\geq&-d_1 \kappa(\psi^{l_2})^{\prime \prime}(x)-f_1^{\prime}(0)\kappa(\psi^{l_2}(x)+\gamma)\\
			=&\kappa(\lambda_1^{l_2} \psi^{l_2}(x)-f_1^{\prime}(0)\gamma)\\
			\geq& \kappa\left(\dfrac{\lambda_1^{\infty}}{2} \eta-f_1^{\prime}(0)\gamma\right)\\
			\geq&0,
		\end{aligned}
	\end{equation*}
	the last inequality holds by taking 
	$$0<\gamma\leq \frac{\eta\lambda_1^{\infty}}{2f_1^{\prime}(0)}.$$ 
	On the other hand, since $0<\psi^{l_2}\leq1$, $\gamma>0$ is bounded, and $\kappa_0$ small enough, there exists a positive constant $\eta_1$ satisfying
	\begin{equation}\label{eta1}
		0<\eta_1<\dfrac{-f_2^{\prime}(0)\gamma}{1+\gamma},
	\end{equation} 
	such that
	\begin{equation}\label{fL}
		\|f_2'(0)\bar{u}-f_2(\bar{u})\|_{L^{\infty}(\R)}\leq \eta_1\|\bar{u}\|_{L^{\infty}(\R)}.
	\end{equation}
	It follows from~\eqref{fL} that for $x\in(nl,nl+l_2)$
	\begin{equation*}
		\begin{aligned}
			-d_2 \bar{v}_0^{\prime \prime}(x)-f_2(\bar{v}_0(x))\geq&-d_2 \kappa(\psi^{l_2})^{\prime \prime}(x)-(f_2^{\prime}(0)+\eta_1)\bar{v}_0(x)\\
			=&\kappa(\lambda_1^{l_2} \psi^{l_2}(x)-f_2^{\prime}(0)\gamma-\eta_1(\psi^{l_2}(x)+\gamma))\\
			\geq& \kappa(\lambda_1^{l_2} \psi^{l_2}(x)-f_2^{\prime}(0)\gamma-\eta_1(1+\gamma))\\
			\geq&0,
		\end{aligned}
	\end{equation*}
	the last inequality follows from $\lambda_1^{l_2} > 0$, $f_2'(0) < 0$, and~\eqref{eta1}.
	
	Furthermore, by assumption $0\leq v_0\leq\varepsilon_0$ and taking $0<\varepsilon_0\leq \kappa\gamma$ independent of $l_2$, we have that
	$$
	v_0(x)\leq \bar{v}_0(x), \quad x\in\R.
	$$
	Let $\bar{v}$ be the solutions to~\eqref{eq}-\eqref{patch} with initial datum $\bar{v}_0$ given by~\eqref{barv_0}. Using comparison principle (Proposition~\ref{proCP}), we have that
	\begin{equation*}\label{u-bar}
		0<v(t, x) \leq \bar{v}(t, x) \text { for all } t\geq0 \text { and } x \in \mathbb{R}.
	\end{equation*}
	Furthermore, we have that $\bar{v}(t,x)$ is nonincreasing with respect to $t$ in $[0,+\infty)\times\R$ and
	\begin{equation*}\label{u_0bar}
		\bar{v}(t,x)\leq\bar{v}_0(x), \quad \forall (t,x)\in[0,+\infty)\times\R.
	\end{equation*}
	By using Schauder estimates in~\cite[Theorem 2.2]{HLZ-2024}, it follows that there is a nonnegative classical bounded stationary solution $p$ of~\eqref{eq}-\eqref{patch} such that $\bar{v}(t, \cdot) \rightarrow p$ as $t \rightarrow+\infty$ locally uniformly in $\mathbb{R}$, and $\bar{v}(t, \cdot)|_{\bar{I}}\to p|_{\bar{I}}$ in $C^2(\bar{I})$ for each patch $I\subset\R$ as $t\to+\infty$. 
	Since $\psi^{l_2}$ is bounded from below by a positive constant (because it is positive, periodic and continuous), and since $p$ is bounded, one can define
	$$
	\kappa^*=\inf \{\kappa>0, \kappa(\psi^{l_2}+\gamma)>p \text { in } \mathbb{R}\} \in[0,\kappa_0].
	$$
	Similar to the proof of~\cite[Theorem 2.3 ]{HLZ-2024}, we can show that $\kappa^*=0$. This implies that $p \equiv 0$. Thus we have that $\|u(t,\cdot)\|_{L^{\infty}(\R)}\to0$ as $t\to+\infty$. We complete the proof of Lemma~\ref{lem-ex21}.
\end{proof}

\vs

Now we are ready to prove Theorem~\ref{thmEXT2}.

\medspace

\begin{proof}[Proof of Theorem~\ref{thmEXT2}]
	Let $R,A>0$ be fixed. Let $u$ be the solution to the Cauchy problem~\eqref{eq}-\eqref{patch} with a
	nonnegative continuous and compactly supported initial datum $u_0
	\not \equiv 0$ such that $\supp(u_0)$ is included in a bistable patch with $\diam(\supp(u_0)) \le R$ and
	$\dist(\supp(u_0), S) \ge \delta$.
	
	  Since  $l_2\geq \max\{R+2\delta, \tilde{l}_2\}$ with $\delta>0$ sufficiently large and $\tilde{l}_2\geq l_2^c$ as in Lemma~\ref{lem-ex21}, we assume without loss of generality  that 
	$$\supp(u_0)\subset[\delta, R+\delta]\subset[0, l_2]=:I_b.$$
	
	Take $\varepsilon_0\in(0,\theta)$ small enough.  Due to $f_2$ satisfies~\eqref{f2} with $\int_0^{K_2} f_2(s)
 \mathrm{d} s<0$, we can construct a function $\bar{f}_2\in C^1(\mathbb{R})$
 such that $\bar{f}_2 \geq f_2$ in $\mathbb{R}, \bar{f}_2(\varepsilon_0/2)=
 \bar{f}_2\left(\theta\right)=\bar{f}_2(A+K_2)=0,
 \bar{f}_2^{\prime}(\varepsilon_0/2)<0, \bar{f}_2^{\prime}(A+K_2)<0, \bar{f}_2>0$ in
 $(-\infty, \varepsilon_0/2) \cup\left(\theta, A+K_2\right), \bar{f}_2<0$ in $\left(\varepsilon_0/2,
 \theta\right) \cup(A+K_2,+\infty)$, and $\int_{\varepsilon_0/2}^{A+K_2} \bar{f}_2(s)
 \mathrm{d} s<0$. There is then a decreasing front profile $\bar{\phi}_2$
 solving the following system
\begin{equation}\label{TW1}
\left\{\begin{array}{l}
d_2 \bar{\phi}_2^{\prime \prime}+\bar{c}_2 \bar{\phi}_2^{\prime}+\bar{f}_2(\bar{\phi}_2)=0 \text { in } \mathbb{R}, \\
\bar{\phi}_2(-\infty)=A+K_2,~~ \bar{\phi}_2(+\infty)=\varepsilon_0/2, ~~\bar{\phi}_2(R/2)\geq A,
\end{array}\right.
\end{equation}
with wave speed $\bar{c}_2<0$. Furthermore, there exists $T>0$ such that
\begin{equation}\label{bound}
\dfrac{\varepsilon_0}{2}\leq \min(\bar{\phi}_2(x-\bar{c}_2T-x_0), \bar{\phi}_2(-x-\bar{c}_2T+x_0))\leq\varepsilon_0, \quad \forall x\in\R,
\end{equation}
where $x_0=\delta+R/2$, and $T$ only depends on $\varepsilon_0$, $A$, $R$, and $\bar{\phi}_2(R)$. 

With $\varepsilon=\varepsilon_0/2$ and $T>0$ given above, we deduce from Lemma~\ref{lem-ex22} that
up to increasing  $\delta>0$, 
\begin{equation}\label{u-bound}
0 \le u(t,x) \le \frac{\varepsilon_0}{2},
\qquad 0 \le t \le T,\; x \in \R \backslash I_b .
\end{equation}
Define
$$
\bar{u}(t,x)=\min\left(\bar{\phi}_2(x-\bar{c}_2t-x_0), \bar{\phi}_2(-x-\bar{c}_2t+x_0)\right), \quad 0\leq t\leq T, x\in I_b.
$$
It follows from $\|u_0\|_{L^{\infty}(\R)}\leq A$ and  $\bar{\phi}_2(R/2)\geq A$ that 
\begin{equation*}\label{u_0}
u_0(x)\leq \bar{u}(0,x),\quad x\in I_b.
\end{equation*}
It is also seen from~\eqref{bound} and~\eqref{u-bound} that
\begin{equation*}
	\bar u(t,x)\ge \frac{\varepsilon_0}{2}\ge u(t,x),~~~~t\in[0,T],~~x=0, l_2.
\end{equation*} 
Furthermore, using~\eqref{TW1}, we have that for $t\in(0,T]$
\begin{align*}\label{baru}
&\partial_t\bar\phi_2(x-\bar c_2t-x_0)-d_2\bar{\phi}_2^{\prime\prime}(x-\bar{c}_2t-x_0)-f_2(\bar{\phi}_2(x-\bar c_2t-x_0))\\
=&-\bar{c}_2\bar{\phi}_2(x-\bar{c}_2t-x_0)-d_2\bar{\phi}_2^{\prime\prime}(x-\bar{c}_2t-x_0)-f_2(\bar{\phi}_2(x-\bar c_2t-x_0))\\
=&\bar{f}_2(\bar{\phi}_2(x-\bar c_2t-x_0))-f_2(\bar{\phi}_2(x-\bar c_2t-x_0))\\
\geq&0
\end{align*}
in bistable patch.
By symmetry, the above inequality also holds for  $\bar{\phi}(-x-\bar{c}_2t+x_0)$ with $0<t\leq T$ in bistable patch. Consequently, 
\begin{equation*}
	\partial_t\bar{u}-d_2\partial_{xx}\bar{u}-f_2(\bar{u})\ge 0,~~~~t\in[0,T],~x\in I_b.
\end{equation*}
It then follows from the comparison principle that
\begin{equation}\label{u}
u(t,x)\leq\bar{u}(t,x),\quad 0\leq t\leq T,~ x\in I_{b}.
\end{equation}
Therefore, gathering~\eqref{bound},~\eqref{u-bound}, as well as~\eqref{u}, we deduce that
\begin{equation*}
	\label{thm2.6-1}
	0\leq u(T,\cdot)\leq\varepsilon_0,\quad x\in\R.
\end{equation*}

Furthermore, recalling~\eqref{lam*} and~\eqref{eta} from the proof of Lemma~\ref{lem-ex21}, the constants $\eta$ and $\lambda_1^{\infty}$ are independent of $l_2$. Since $l_2 \ge \max\{R+2\delta, \tilde{l}_2\}$, we may choose 
$$
0 < \varepsilon_0 \leq \min\{\kappa\gamma, 2\theta\},
$$
where $0 < \gamma \leq \frac{\eta \lambda_1^{\infty}}{2f_1^{\prime}(0)}$ and $\kappa \in (0, \kappa_0]$, so that $\varepsilon_0$ is independent of $l_2$. Therefore, the result of Theorem~\ref{thmEXT2} follows directly from Lemma~\ref{lem-ex21}. This completes the proof of Theorem~\ref{thmEXT2}.
\end{proof}


\appendix

\section{Appendix}

We collect several preliminary comparison principles established in~\cite{HLZ-2024} in different settings of patch models, as they will be frequently used throughout this paper.

Let $(a, b) \subset \mathbb{R}$ be an interval composed of finitely many patches. More precisely, we assume that $-\infty \leq a=x_0<x_1<\cdots<x_n=b \leq+\infty$ and define the patches $I_i=\left(x_{i-1}, x_i\right)$ for $i=1, \ldots, n$. We consider a one-dimensional parabolic operator
$$
\mathcal{L} u:=u_t-d(x) u_{x x}-c(t, x) u_x-F(x, u), \text { for } t>0 \text { and } x \in\R\backslash\left\{x_1, \ldots, x_{n-1}\right\}=\bigcup_{i=1}^n I_i,
$$
with interface conditions
\begin{equation}\label{inter-A.1}
	u\left(t, x_i^{-}\right)=u\left(t, x_i^{+}\right) \text {and } u_x\left(t, x_i^{-}\right)=\sigma_i u_x\left(t, x_i^{+}\right), \text {for } t>0 \text { and } i=1, \ldots, n-1 .
\end{equation}
If $a$ or $b$ is finite, we impose Dirichlet-type boundary conditions:
\begin{equation}\label{inter-A.2}
	u(t, a)=\varphi^{-}(t) \text { or } u(t, b)=\varphi^{+}(t), \text { for } t \geq 0,
\end{equation}
where $\varphi^{ \pm}:[0,+\infty) \rightarrow \mathbb{R}$ are given continuous functions. Here, the function $x \mapsto d(x)$ is assumed to be constant and positive in each patch, i.e., $\left.d\right|_{I_i}=d_i>0$ for some constant $d_i$, the function $c$ is assumed to be continuous and bounded in $\left(0, T_0\right) \times \cup_{i=1}^n I_i$ for every $T_0 \in (0,+\infty)$, the $\sigma_i$'s are given positive real numbers, and, for each $1 \leq i \leq n, F(x, s)=f_i(s)$ for $(x, s) \in I_i \times \mathbb{R}$, with $f_i \in C^1(\mathbb{R})$.

The notion of super- and sub-solutions of $\mathcal{L} u=0$ associated with the interface and boundary conditions~\eqref{inter-A.1}-\eqref{inter-A.2} given in~\cite[Definition A.1]{HLZ-2024} is recalled below.

\begin{definition}
	For $T \in(0,+\infty]$, we say that a continuous function $\bar{u}:[0, T) \times \overline{(a, b)} \rightarrow \mathbb{R}$, which is assumed to be bounded in $\left[0, T_0\right] \times \overline{(a, b)}$ for every $T_0 \in(0, T)$, is a supersolution for the problem $\mathcal{L} u=0$ with interface and boundary conditions~\eqref{inter-A.1}-\eqref{inter-A.2}, if $\left.\bar{u}\right|_{(0, T) \times \overline{I_i}} \in C_{t ; x}^{1 ; 2}\left((0, T) \times \overline{I_i}\right)$ satisfies $\left.\mathcal{L} \bar{u}\right|_{(0, T) \times I_i} \geq 0$ in the classical sense for each $1 \leq i \leq n$, and if
	$$
	\bar{u}_x\left(t, x_i^{-}\right) \geq \sigma_i \bar{u}_x\left(t, x_i^{+}\right), \text { for } t \in(0, T) \text { and } i=1, \ldots, n-1,
	$$
	and
	$$
	\bar{u}(t, a) \geq \varphi^{-}(t) \text { or } \bar{u}(t, b) \geq \varphi^{+}(t), \text { for } t \in[0, T),
	$$
	provided that $a$ or $b$ is finite. A subsolution can be defined in a similar way with all the inequality signs above reversed.
\end{definition}

The following proposition provides a comparison principle between
super- and subsolutions on a bounded interval $(a, b)$, as established in~\cite[Proposition A.2]{HLZ-2024}.

\begin{proposition}[Comparison principle on bounded intervals]
	\label{proCPBD}
	Assume that $-\infty<a< b<+\infty$. For $T \in(0,+\infty]$, let $\bar{u}$ and $\underline{u}$ be, respectively, a super- and a subsolution in $[0, T) \times[a, b]$ of $\mathcal{L} u=0$ with~\eqref{inter-A.1}-\eqref{inter-A.2}, and assume that $\bar{u}(0, \cdot) \geq \underline{u}(0, \cdot)$ in $[a, b]$. Then, $\bar{u} \geq \underline{u}$ in $[0, T) \times[a, b]$ and, if $\bar{u}(0, \cdot) \not \equiv \underline{u}(0, \cdot)$, then $\bar{u}>\underline{u}$ in $(0, T) \times(a, b)$.
\end{proposition}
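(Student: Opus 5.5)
The statement to prove is Proposition~\ref{proCPBD}, the comparison principle on a bounded interval made of finitely many patches. My plan is a classical maximum-principle argument on the difference $w:=\bar u-\underline u$, done patch by patch. Transmission conditions are handled at the interfaces by a Hopf-type argument.

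\textbf{Linearization.} Fix $T_0\in(0,T)$. On each patch $I_i$, I write
$$F(x,\bar u)-F(x,\underline u)=b(t,x)\,w, \qquad b=\int_0^1 f_i'\big(\underline u+s(\bar u-\underline u)\big)\,ds .$$
The coefficient $b$ is bounded on $[0,T_0]\times\overline{I_i}$, because $f_i\in C^1$ and $\bar u,\underline u$ are bounded there. Then $w$ satisfies $w_t-d w_{xx}-c w_x-bw\ge0$ in each patch. On the interfaces, $w$ is continuous and $w_x(t,x_i^-)\ge\sigma_i w_x(t,x_i^+)$. At the endpoints, $w\ge0$ at $a$ and $b$, and $w(0,\cdot)\ge0$.

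\textbf{Weak comparison.} Set $\tilde w=e^{-\mu t}w$ with $\mu>\sup|b|+1$, so that the zeroth-order coefficient becomes $-(b-\mu)>0$. Suppose $\min_{[0,T_0]\times[a,b]}\tilde w<0$, attained at some $(t_0,x_0)$. Then $t_0>0$, and $x_0\ne a,b$ by the boundary conditions. If $x_0$ lies in an open patch, then $\tilde w_t\le0$, $\tilde w_x=0$ and $\tilde w_{xx}\ge0$ there, which contradicts the strict inequality coming from $(\mu-b)\tilde w<0$.

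The delicate case is $x_0=x_i$, an interface point, where only one-sided derivatives exist. Minimality gives $\tilde w_x(t_0,x_i^-)\le0\le\tilde w_x(t_0,x_i^+)$. Combined with $\tilde w_x(x_i^-)\ge\sigma_i\tilde w_x(x_i^+)$ and $\sigma_i>0$, this forces both one-sided derivatives to vanish. To get a contradiction I would perturb by a small function that breaks the interface equality strictly. Concretely, replace $w$ by $w+\varepsilon\,h(x)e^{Kt}$, where $h>0$ is piecewise smooth, $h'(x_i^-)>\sigma_i h'(x_i^+)$ at every interface, and $K$ is large enough to absorb $-dh''-ch'-bh$. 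Such an $h$ exists: on each patch take $h$ piecewise affine or quadratic with slopes chosen so that the jump inequality holds, then add a constant to make $h$ positive.

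With this perturbation, the perturbed function still satisfies all the supersolution inequalities, and the interface inequality becomes strict. A negative minimum at an interface is then impossible: the one-sided derivative constraints at a minimum give $w_x(x_i^-)\le0\le w_x(x_i^+)$, which is incompatible with the strict inequality $w_x(x_i^-)>\sigma_i w_x(x_i^+)$. Letting $\varepsilon\to0$ gives $w\ge0$ on $[0,T_0]\times[a,b]$, and $T_0$ is arbitrary. I expect this interface step to be the main obstacle: building $h$ with the right jump signs compatibly with all patches, and checking the time-dependence inside the $e^{Kt}$ factor.

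\textbf{Strict comparison.} Assume $w(0,\cdot)\not\equiv0$ and suppose $w(t_1,x_1)=0$ for some $t_1>0$ and $x_1\in(a,b)$.

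First, by the parabolic strong maximum principle inside the patch containing $x_1$ (or an adjacent patch if $x_1$ is an interface), $w\equiv0$ on $(0,t_1]\times\overline{I}$ for that patch. The condition $w(0,\cdot)\not\equiv0$ is what rules out $w\equiv0$ on every patch.

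Next, I propagate the zero set across interfaces. If $w\equiv0$ on $I_i$ but $w>0$ in $I_{i+1}$ near $x_i$, then $w=0$ at $x_i$, so the parabolic Hopf lemma gives $w_x(t,x_i^+)>0$. Meanwhile $w_x(t,x_i^-)=0$, which contradicts $0=w_x(x_i^-)\ge\sigma_i w_x(x_i^+)>0$. The symmetric case is handled the same way.

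Hence $w\equiv0$ on $(0,t_1]\times(a,b)$. By continuity this gives $w(0,\cdot)\equiv0$, a contradiction. Therefore $\bar u>\underline u$ in $(0,T)\times(a,b)$.
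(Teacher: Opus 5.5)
The paper gives no proof of this proposition; it quotes it from \cite[Proposition A.2]{HLZ-2024}. Your argument is a correct self-contained proof.

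\textbf{Weak comparison.} Here your route differs from the usual one. The common treatment runs the minimum argument on $e^{-\mu t}w$. At a negative minimum located at an interface $x_i$, it applies the parabolic Hopf lemma on both adjacent patches. The alternative, that $e^{-\mu t}w$ equals a negative constant there, is excluded by the equation since $\mu-b>0$. Hopf then gives $w_x(t_0,x_i^-)<0<w_x(t_0,x_i^+)$, which contradicts $w_x(x_i^-)\ge\sigma_i w_x(x_i^+)$.

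You instead make the transmission inequality strict by adding $\varepsilon h e^{Kt}$, which turns the interface case into elementary calculus. The construction you flagged as the main obstacle is immediate:
\begin{itemize}
\item take $h$ continuous and affine on each $I_i$, with slopes $s_n=0$ and $s_i=\sigma_i s_{i+1}+1$, plus a constant making $h>0$;
\item then $h''=0$ and each jump inequality holds with margin $1$;
\item any $K\ge \sup|c|\,\max_i|s_i|/\min h+\sup|b|$ keeps $w+\varepsilon h e^{Kt}$ a supersolution on $[0,T_0]$.
\end{itemize}
You then run your minimum argument on $e^{-\mu t}(w+\varepsilon h e^{Kt})$ and let $\varepsilon\to0$. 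Since the strict part needs Hopf anyway, the perturbation saves you the boundary-point lemma for negative minima, not Hopf altogether.

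\textbf{Strict comparison.} Two points should be made explicit.

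First, when $x_1$ is itself an interface point $x_i$, the interior strong maximum principle applies in neither adjacent patch, so \emph{or an adjacent patch} is not yet justified. The clean fix is a dichotomy at the fixed level $t_1$. On each open patch $I$, either $w(t_1,\cdot)>0$ on all of $I$, or $w\equiv0$ on $(0,t_1]\times\overline I$, because an interior zero at time $t_1$ triggers the strong maximum principle. Suppose $w(t_1,x_i)=0$ and at least one neighbour of $x_i$ falls in the first alternative. Then Hopf on that side, together with either Hopf or $w_x=0$ on the other side, contradicts $w_x(t_1,x_i^-)\ge\sigma_i w_x(t_1,x_i^+)$. This handles your starting case and your propagation step at once.

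Second, Hopf at the lateral point $(t_1,x_i)$ needs $w>0$ on a parabolic neighbourhood $(t^*,t_1]\times I$, not only on $\{t_1\}\times I$. This holds because an interior zero at time $t'$ forces $w\equiv0$ on $(0,t']\times I$. Such times cannot accumulate at $t_1$ when $w(t_1,\cdot)>0$ on $I$, since by continuity that would give $w(t_1,\cdot)\equiv0$ on $I$.

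With these two points, $w\equiv0$ on $(0,t_1]\times[a,b]$, and your contradiction at $t=0$ follows by continuity.
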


We finally recall a comparison principle in the whole space with countably many interfaces. 
Set $H=\left\{x_i: i \in \mathbb{Z}\right\} \subset \mathbb{R}$ with
$$
\delta:=\inf _{i \in \mathbb{Z}}\left(x_{i+1}-x_i\right)>0,
$$
and we consider the problem
\begin{equation}\label{cp-eq}
	\left\{\begin{aligned}
		u_t-d(x) u_{x x}-c(t, x) u_x & =F(x, u), & & t>0, x \in \mathbb{R} \backslash H, \\
		u\left(t, x_i^{-}\right) & =u\left(t, x_i^{+}\right), & & t>0, i \in \mathbb{Z}, \\
		u_x\left(t, x_i^{-}\right) & =\sigma_i u_x\left(t, x_i^{+}\right), & & t>0, i \in \mathbb{Z} .
	\end{aligned}\right.
\end{equation}
We assume that the function $x \mapsto d(x)$ is equal to a positive constant $d_i$ in each interval $\left(x_i, x_{i+1}\right)$, and that $\sup _{i \in \mathbb{Z}} d_i<+\infty$. The function $c$ is assumed to be continuous and bounded in $\left(0, T_0\right) \times(\mathbb{R} \backslash H)$ for every $T_0 \in(0,+\infty)$, the $\sigma_i$'s are given positive real numbers, and there are $C^1(\mathbb{R})$ functions $\left(f_i\right)_{i \in \mathbb{Z}}$ such that $F(x, s)=f_i(s)$ for every $(x, s) \in\left(x_i, x_{i+1}\right) \times \mathbb{R}$ and $i \in \mathbb{Z}$, with $\sup _{i \in \mathbb{Z}}\left\|f_i^{\prime}\right\|_{L^{\infty}([-L, L])}<+\infty$ for every $L>0$.

According to~\cite[Proposition A.4]{HLZ-2024}, the following result provides a comparison between sub- and supersolutions of~\eqref{cp-eq} under ordered initial conditions.

\begin{proposition}[Comparison principle for problems of type~\eqref{cp-eq}]\label{proCP}
	For $0<T\leq +\infty$, let $\bar{u}$ and $\underline{u}$ be, respectively, a super- and a subsolution of~\eqref{cp-eq}
	in $[0, T) \times \mathbb{R}$ with $\bar{u}(0, \cdot) \geq \underline{u}(0, \cdot)$ in $\mathbb{R}$. Then, $\bar{u} \geq \underline{u}$ in $[0, T) \times \mathbb{R}$, and, if $\bar{u}(0, \cdot) \not \equiv \underline{u}(0, \cdot)$, then $\bar{u}>\underline{u}$ in $(0, T) \times \mathbb{R}$.
\end{proposition}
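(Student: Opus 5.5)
The plan is to reduce the whole-line statement to the bounded-interval comparison principle (Proposition~\ref{proCPBD}) by means of a Phragm\'en--Lindel\"of type penalization, and to recover the strict inequality from the strong part of Proposition~\ref{proCPBD}. Fix $T_0\in(0,T)$. Both $\bar u$ and $\underline u$ are bounded on $[0,T_0]\times\R$, say by $L$. Set $w:=\bar u-\underline u$. In each patch, $w$ satisfies
\[
w_t-d(x)w_{xx}-c(t,x)w_x-a(t,x)w\ge0,\qquad a(t,x):=\int_0^1\partial_sF\big(x,\underline u+\tau(\bar u-\underline u)\big)\,d\tau ,
\]
where $|a|\le M:=\sup_i\|f_i'\|_{L^\infty([-L,L])}<+\infty$. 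At every interface it satisfies $w_x(t,x_i^-)\ge\sigma_i w_x(t,x_i^+)$. Since $a$ depends on $t$ while Proposition~\ref{proCPBD} only allows nonlinearities depending on $(x,s)$, I will not invoke that proposition verbatim at this stage. Instead I will repeat its first-touching-point argument directly for the linear inequality, using Proposition~\ref{proCPBD} only as a model.

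The first step is to build an interface-compatible growing weight. Because $\delta=\inf_i(x_{i+1}-x_i)>0$, I choose a $C^2$ function $h\ge1$ with $h'\equiv0$ on $(x_i-\delta/4,x_i+\delta/4)$ for every $i$, with $h(x)\to+\infty$ as $|x|\to+\infty$ (for instance $h$ growing like $1+|x|$ away from the interfaces), and with $\|h'\|_\infty,\|h''\|_\infty\le C$. Then $h$ satisfies every interface condition with equality, whatever the $\sigma_i$. Next, for $\varepsilon>0$ and $K$ large, depending only on $M$, $\sup d_i$, $\sup|c|$ and $C$, I set
\[
z_\varepsilon:=e^{-Kt}w+\varepsilon\,(1+t)\,h .
\]
A direct computation then gives $(z_\varepsilon)_t-d(z_\varepsilon)_{xx}-c(z_\varepsilon)_x-(a-K)z_\varepsilon>0$ in each patch, because the term $\varepsilon h$ coming from $\partial_t\big((1+t)h\big)$, together with the factor $K-M>0$ multiplying $\varepsilon(1+t)h$, dominates the terms involving $h'$ and $h''$. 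The function $z_\varepsilon$ still satisfies the interface inequalities, and $z_\varepsilon(0,\cdot)>0$.

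The second step is the touching argument. Suppose $z_\varepsilon$ became negative somewhere in $[0,T_0]\times\R$. Since $w\ge-2L$ and $h\to+\infty$, the negative set is contained in a fixed compact region, so there is a first time $t^*>0$ and a point $x^*$ with $z_\varepsilon(t^*,x^*)=0=\min z_\varepsilon(t^*,\cdot)$. I split into two cases.
\begin{enumerate}
\item If $x^*$ lies inside a patch, then at $(t^*,x^*)$ we have $z_t\le0$, $z_x=0$, $z_{xx}\ge0$ and $z=0$. The strict differential inequality is then violated.
\item If $x^*=x_i$ is an interface, the minimum gives $z_x(x_i^-)\le0\le z_x(x_i^+)$. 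Combined with $z_x(x_i^-)\ge\sigma_i z_x(x_i^+)$ and $\sigma_i>0$, this forces both one-sided derivatives to vanish. One then has to exclude this degenerate touching.
\end{enumerate}
The interface case is the main obstacle. My first attempt is a Hopf-lemma argument on one adjacent patch. Near $(t^*,x_i)$, on $(x_i-\delta/4,x_i)$ (or on the other side), the function $z$ is a strict supersolution of a uniformly parabolic equation with bounded coefficients, and it attains a zero minimum at the lateral boundary point. The parabolic Hopf lemma then yields $z_x(t^*,x_i^-)<0$ unless $z$ vanishes identically nearby, and both alternatives contradict what was found above. The vanishing alternative is also ruled out by the strict inequality. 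Alternatively, I can shrink to $[x_i-\delta/4,x_i+\delta/4]$ and apply the argument underlying Proposition~\ref{proCPBD}, i.e.\ \cite[Proposition A.2]{HLZ-2024}. Either way, I conclude $z_\varepsilon\ge0$ on $[0,T_0]\times\R$, and letting $\varepsilon\to0$ and then $T_0\to T$ gives $\bar u\ge\underline u$.

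For the strict part, assume $\bar u(0,\cdot)\not\equiv\underline u(0,\cdot)$, so that $w(0,\cdot)\ge0$ is positive at some point. For any bounded interval $[\alpha,\beta]$ made of finitely many patches and containing such a point, I will apply Proposition~\ref{proCPBD} on $[\alpha,\beta]$, with the already established boundary ordering $\bar u\ge\underline u$ at $x=\alpha,\beta$. This gives $\bar u>\underline u$ in $(0,T)\times(\alpha,\beta)$, and letting $\alpha\to-\infty$ and $\beta\to+\infty$ yields the strict inequality on $(0,T)\times\R$.
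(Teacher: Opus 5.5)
Your proof is correct, and it takes a genuinely different route from the paper: the paper does not prove this statement but imports it verbatim from \cite[Proposition A.4]{HLZ-2024}, as it does for Proposition~\ref{proCPBD}.

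\textbf{What your route buys.} Your argument is self-contained. The penalization $z_\varepsilon=e^{-Kt}(\bar u-\underline u)+\varepsilon(1+t)h$, with $h$ flat on $(x_i-\delta/4,x_i+\delta/4)$, preserves the transmission inequalities for arbitrary $\sigma_i>0$. It also shows where each hypothesis behind~\eqref{cp-eq} is used:
\begin{itemize}
\item $\delta>0$ gives the uniform bounds on $h',h''$ and ensures $h\to+\infty$;
\item $\sup_i d_i<+\infty$, the boundedness of $c$ and the uniform bounds on $f_i'$ determine the choice of $K$;
\item the boundedness of $\bar u,\underline u$ on $[0,T_0]\times\R$ localizes the first touching point.
\end{itemize}
The citation, by contrast, only buys brevity.

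\textbf{Strict inequality.} Your use of Proposition~\ref{proCPBD} is legitimate if you take the Dirichlet data $\varphi^-(t)=\underline u(t,\alpha)$ and $\varphi^+(t)=\underline u(t,\beta)$. Then $\underline u$ is a subsolution with equality, and $\bar u$ is a supersolution by the first part.

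\textbf{Interface case.} This is easier than you make it, and no Hopf lemma is needed.
\begin{itemize}
\item In every patch, $(z_\varepsilon)_t-d(z_\varepsilon)_{xx}-c(z_\varepsilon)_x-(a-K)z_\varepsilon\ge\varepsilon h\ge\varepsilon$, and $z_\varepsilon$ is $C^{1;2}$ up to $x_i$.
\item Once both one-sided derivatives vanish at $(t^*,x_i)$, Taylor's formula on the closed left patch gives $\partial_{xx}z_\varepsilon(t^*,x_i^-)\ge0$, since $z_\varepsilon(t^*,\cdot)\ge0=z_\varepsilon(t^*,x_i)$.
\item Also $\partial_t z_\varepsilon(t^*,x_i)\le0$ and $z_\varepsilon(t^*,x_i)=0$. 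Moreover $c\,\partial_x z_\varepsilon\to0$ and $(a-K)z_\varepsilon\to0$, because $c$ and $a$ are bounded.
\item Letting $(t,x)\to(t^*,x_i^-)$ in the strict inequality then yields $0\ge\varepsilon$, a contradiction.
\end{itemize}
If you keep the Hopf route, note that the touching point lies on the top time level. You therefore need the version of the lemma that uses only the part $\{t\le t^*\}$ of an interior ball. You also need $z_\varepsilon(t^*,\cdot)>0$ at interior points of patches, which your Case~1 provides.
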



\end{document}